\DeclareSymbolFont{bbold}{U}{bbold}{m}{n}
\DeclareSymbolFontAlphabet{\mathbbold}{bbold}
 \newtheorem{thm}{Theorem}[section]
 \newtheorem{cor}[thm]{Corollary}
 \newtheorem{lemma}[thm]{Lemma}
 \newtheorem{prop}[thm]{Proposition}
 \theoremstyle{definition}
 \newtheorem{defn}[thm]{Definition}
  \newtheorem{exas}[thm]{Examples}
 \theoremstyle{remark}
 \newtheorem{rems}[thm]{Remarks}
\newtheorem{exa}[thm]{Example}
\numberwithin{equation}{section}
\theoremstyle{definition}
\theoremstyle{remark}
\numberwithin{equation}{section}
\newcommand{\nmm}[1]{{\left\vert\kern-0.25ex\left\vert\kern-0.25ex\left\vert #1 
    \right\vert\kern-0.25ex\right\vert\kern-0.25ex\right\vert}}
\newcommand{\vertiii}[1]{{\left\vert\kern-0.25ex\left\vert\kern-0.25ex\left\vert #1 
    \right\vert\kern-0.25ex\right\vert\kern-0.25ex\right\vert}}
\newcommand{\R}{{\mathbb R}}
\newcommand\C{{\mathbb C}}
\newcommand\N{{\mathbb N}}
\newcommand\A{{\mathcal A}}
\newcommand\K{{\mathcal K}}
\newcommand\Af{{\frak A}}
\newcommand\Mf{{\frak M}}
\newcommand\Nf{{\frak N}}
\renewcommand\l{\lambda}
\renewcommand\O{\Omega}
\newcommand\U{\Upsilon}
\newcommand\lb{{\bm\lambda}} 
\newcommand\nub{{\bm\nu}}
\newcommand\B{\mathcal{B}}
\newcommand\D{\mathcal D}
\renewcommand\P{\mathcal{P}}
\newcommand\LM{\mathcal{LM}}
\newcommand\LT{\mathcal{L}}
\renewcommand\r{\mathbf{r}}
\newcommand\z{\mathbf{z}}
\newcommand\w{\mathbf{w}}
\renewcommand\a{\alpha}
\renewcommand\aa{{\bm\alpha}}  
\renewcommand\b{\beta}
\newcommand\bb{{\bm\beta}} 
\newcommand\nunu{{\bm\nu}}    
\renewcommand\t{{\bm t}}
\newcommand\ep{\varepsilon}
\newcommand\wt{\widetilde}
\newcommand\ov{\overline}
\renewcommand\Re{\operatorname{Re}}
\renewcommand\Im{\operatorname{Im}}
\newcommand\1{{{\bm 1}}} 
\newcommand\dom{\operatorname{dom}} 
\begin{document}

\large
\title{Analytic Besov functional calculus for several commuting operators}

\author{Charles Batty}
\address{St.\ John's College, University of Oxford, Oxford OX1 3JP, England}
\email{charles.batty@sjc.ox.ac.uk}

\author{Alexander Gomilko}
\address{Faculty of Mathematics and Computer Science\\
Nicolas Copernicus University\\
Chopin Street 12/18\\
87-100 Toru\'n, Poland}
\email{alex@gomilko.com}

\author{Dominik Kobos}
\address{
USC School of Philosophy\\
Mudd Hall of Philosophy (MHP)\\
3709 Trousdale Parkway\\
Los Angeles, CA 90089-0451\\
U.S.A.}
\email{dkobos@usc.edu}

\author{Yuri Tomilov}
\address{
Institute of Mathematics\\
Polish Academy of Sciences\\
\'Sniadeckich 8\\
00-656 Warsaw, Poland \\
and \\
Faculty of Mathematics and Computer Science\\
Nicolas Copernicus University\\
Chopin Street 12/18\\
87-100 Toru\'n, Poland
}
\email{ytomilov@impan.pl}

\subjclass[2020]{Primary 47A20; Secondary 32A70 47B44 47D03}

\def\today{\number\day \space\ifcase\month\or
 January\or February\or March\or April\or May\or June\or
 July\or August\or September\or October\or November\or December\fi
 \space \number\year}
\date{\today}

\thanks{}

\keywords{Analytic Besov functions, functional calculus, commuting semigroups}

\begin{abstract}
This paper investigates analytic Besov functions of $n$ variables which act on the generators of $n$ commuting $C_0$-semigroups on a Banach space.   The theory for $n=1$ has already been published, and the present paper uses a different approach to that case as well as extending to the cases when $n\ge2$.   It also clarifies some spectral mapping properties and provides some operator norm estimates.
\end{abstract}

\maketitle
\begin{center}
Dedicated with much gratitude to E. Brian Davies on the occasion of his 80th birthday
\end{center}

\section {Introduction}

Let $A$ be an (unbounded) operator on a complex Banach space $X$, and assume that $-A$ is the generator of a bounded $C_0$-semigroup $(e^{-tA})_{t\ge0}$ on $X$.   The so-called Hille-Phillips functional calculus for $A$ assigns a bounded operator $f(A)$ to each function $f$ in the Hille-Phillips (HP) algebra $\LM$.   If $A$ is sectorial of angle less than $\pi/2$, so the semigroup is bounded and holomorphic on a sector in $\C_+$, then there is a theory of a holomorphic calculus for $A$ which may be applied to bounded holomorphic functions on a sector, but there is no guarantee that this calculus assigns a bounded operator to all bounded holomorphic functions.   
However in the sectorial case there is a bounded calculus for $A$ for all functions in the Banach algebra $\B^1$ of analytic Besov functions on $\C_+$, as shown originally in \cite{Vit}.   We have recently developed a theory of a $\B^1$-calculus, which applies to many semigroup generators, but not to all of them.     There is a $\B^1$-calculus for $A$ if $A$ is sectorial of angle less than $\pi/2$ or if $X$ is a Hilbert space and the semigroup $(e^{-tA})_{t\ge0}$ is bounded.  The theory of the $\B^1$-calculus is set out in detail in \cite{BGT} and \cite{BGT2}, but readers may prefer to read the summary in \cite{BGT4} in order to understand the setting of $\B^1$.

Some applications of the $\B^1$-calculus are given in \cite{BGT} and \cite{BGT2}, and others have been found subsequently.   For example, a new variant of the Katznelson-Tzafriri theorem for the $\B^1$-calculus has been proved in \cite{BS}, provided that $A$ has a $\B^1$-calculus.   Estimates involving functions in $\LM$ can be improved by using the $\B^1$-calculus norm, instead of the HP-norm; the difference is considerable in some cases.

Functional calculus is often used to investigate the various notions of joint spectrum for several commuting operators.   Such work has had limited success, and work on functional calculi for two or more generators of $C_0$-semigroups has been sparse.   The Hille-Phillips calculus for bounded $C_0$-semigroups extends to several commuting semigroups in a straightforward fashion.   The holomorphic calculus for sectorial operators can be applied in some cases.   Some partial results for several commuting holomorphic semigroups have been obtained in \cite{ALM1}, \cite{ALM2}, \cite{LLL} and other papers, and a multi-variable version of the Bochner-Phillips functional calculus was set up in \cite{Mir99}.    Some functional calculi for non-holomorphic functions have been introduced for operators with real spectrum (for example, in \cite{Dav}), and versions for several operators have been studied in \cite{AS},  \cite{ASS} and \cite{Mir}.   The $\B^1$-calculus does not apply to generators of bounded $C_0$-groups, except in very special cases.   Nevertheless Arveson's theory of representations of locally abelian groups can be applied to generators of bounded $C_0$-groups (see \cite[Chapter 8]{Dav1}) and to finitely many commuting $C_0$-groups, in ways which help to develop the theory of the $\B^1$-calculus.   In addition, the book \cite[Chapter 3]{ABG} develops a functional calculus for tuples of commuting generators of $C_0$-groups from a different perspective and uses it for the study of commutators and other applications to mathematical physics.

A two-variable version $\B^2$ of $\B^1$ and a theory of the $\B^2$-calculus for two commuting operators have been covered in the thesis \cite{Kob}, using the same methods as in \cite{BGT} and \cite{BGT2}.   In this paper we extend the results to an arbitrary finite number of variables and commuting operators.   Moreover, we use techniques which are quite different from those in \cite{BGT} and \cite{BGT2}.   We base them on a reproducing formula for analytic Besov functions (see Corollary \ref{CorR}), which was first used in \cite{BGT}, and later used in different ways in \cite{BGT2} and \cite{BGT3}.   This formula provides a route to the essential properties of the $\B^n$-calculus:  mapping $\B^n$ into $L(X)$, showing it is a homomorphism, and that it is unique (see Section \ref{sect5}).    This route is more direct, and clearer, than the arguments used in \cite{BGT} and \cite{BGT2}.

We introduce the Banach algebras $\B^n$, and their elementary decomposition into $2^n$ subalgebras, in Section \ref{sect2}.  In the case $n=1$, the two subalgebras were an ideal $\B^1_0$ and the one-dimensional space of constant functions.   In the general case we need more sophisticated notation in our presentation of the decomposition, but the outcomes in this paper are natural generalisations from the one-variable and two-variable cases.   In Section \ref{sect3}, we obtain a reproducing formula for the elementary components of a given function $f\in\B^n$ (Corollary 3.3).   In Section \ref{sect4}, we define the $\B^n$-calculus initially as a function from $\B^n$ to $L(X,X^{**})$, and then we prove in Section \ref{sect5} that it is a functional calculus.   These proofs proceed in an unusual way, as did the corresponding proofs in \cite{BGT}.   In Sections \ref{sect6}-\ref{sect8}, we establish various further properties of the family of $\B$-calculi, including compatibility across operators, spectral inclusions, and operator norm estimates.

\subsection*{Preliminaries}

\noindent{\it Notation:}
For $n\in\N$, we let $I_n:= \{1,2,\dots,n\}$ and $\P_n$ be the power-set of $I_n$.   We denote elements of $\P_n$ by symbols such as  $\O, \Psi$. For $\O \in \P_n$, let $\O^c := I_n \setminus \O$, and $|\O|$ be the cardinality of $\O$.

We use the notation $\R_+$ for the interval $[0,\infty)$, $\C_+$ for the open right half-plane $\{\l\in\C : \Re\l>0\}$ and $\ov\C_+$ for the closed right half-plane.   Real  numbers may typically be represented by $s,t,\a,\b,\gamma$, and complex numbers by $z,\l,\nu,\zeta$.

We use bold print to denote elements of $\R^n$ and $\C_+^n$; for example, $\mathbf{t}$ denotes $(t_1,\dots,t_n)$ in $\R^n$, $\z$ denotes an $n$-tuple  $(z_1,z_2,\dots,z_n)$ in $\C_+^n$, and similarly $\lb$ denotes an $n$-tuple $(\l_1,\l_2,\dots,\l_n)$.

\noindent{\it Holomorphic functions:}
For a holomorphic function $f$ on $\C_+^n$, we let $D_j f$ be the partial derivative of $f$ with respect to the $j$th coordinate variable $z_j$ where $j \in I_n$.  For $\O \in \P_n$, we let $\D_\O$ be the composition of the derivatives $D_j$ for $j \in \O$ (once each).    If $\O=\emptyset$, then $\D_\emptyset f = f$.   For $\O = I_n$, we may write $\D_n$ instead of $\D_{I_n}$.

Let $f \in H^\infty(\C_+^n)$.  We will make use of the following properties:
\begin{itemize}
\item For $\aa \in (0,\infty)^n$, \; $\displaystyle\sup_{\Re \z = \aa} |f(\z)| = \sup_{\Re\z\ge\aa} |f(\z)|$  \quad (Maximum Principle).
\vskip10pt
\item For $\z\in \C_+^n$, \quad $\displaystyle |\D_\O f(\z)| \le \frac{\|f\|_\infty}{2^{|\O|} \prod_{j\in\O} \Re z_j}$ \qquad\; (Cauchy's inequality).
\end{itemize}
Cauchy's inequality extends to higher-order derivatives, by repeated use of first-order cases.

\noindent Several variants of resolvent functions appear frequently, including the following:
\begin{itemize}
\item  For $z, \l \in \C_+$, we put $r_\l(z) = (z+\l)^{-1}$.
\item  For $\l\in\C_+$, $j\in I_n$ and  $\z\in \C_+^n$,  we put $r_{\l,j}(\z) = (z_j+\l)^{-1}$.
\item  For $\z, \lb \in \C_+^n$, we put  $\r_\lb(\z) = \prod_{j=1}^n (z_j+\l_j)^{-1}$.
\item For $z, \l \in \C_+$, we put  $K(z,\l) = (-2/\pi) (z+\l)^{-2}$.
\item For $\z,\lb \in \C_+^n$, we put  $\K_n(\z,\lb) =  \prod_{j=1}^n K(z_j,\l_j)$.
\end{itemize}

\noindent
{\it Measures:}
We may denote Lebesgue measure on $\R_+^n$ by $d\aa$ or $d\t$, and Lebesgue measure on $\C_+^n$ by $dS_n$.   We let
\[
dV(\lambda)=\alpha \, d\beta \, d\alpha,\qquad \l=\alpha+i\beta\in \C_{+},
\]
and $dV_n$ be the $n$-fold product measure of $dV$, so
\[
dV_n(\lb)= \left( \textstyle{\prod}_{j=1}^n \a_j \right) \, dS_n(\lb), \qquad \lb = (\a_j+i\b_j)_{j=1}^n \in \C_+^n.
\]

\noindent
{\it Operators:}
In this paper an ``operator'' may be either a bounded linear operator on a Banach space $X$ or an unbounded operator {\bf with dense domain} in $X$.   The notation $L(X)$ and $L(X,Y)$ denote the spaces of bounded linear operators from $X$ to $X$ and from $X$ to $Y$, respectively.     Bounded operators will typically be denoted by $T$ and unbounded operators by $A$.

An operator $A$ on $X$ is sectorial of angle less than $\pi/2$ if and only if the spectrum $\sigma(A)$ of $A$ is contained in $\C_+ \cup\{0\}$ and $M_A:= \sup_{\l\in\C_+} \|\l(\l+A)^{-1}\| < \infty$.   Equivalently, the operator $-A$ is the generator of a sectorially bounded holomorphic $C_0$-semigroup on $X$.

\section {The algebra $\B^n$} \label{sect2}

Let $f$ be a holomorphic function on $\C_+^n$,  $\O\in\P_n$ and $k=|\O|$.  Let  $H_\O f$ be the function of variables $\aa_\O := (\a_j)_{j\in \O} \in (0,\infty)^k$  for $j \in \O$,  defined by
\begin{equation} \label{Hdef}
(H_\O f)(\aa_\O) := \sup_{\z \in W_{\aa_\O}} \left| (\D_\O f)(\z) \right|,
\end{equation}
where 
\[
W_{\aa_\O} := \left\{ \z\in\C_+^n:  \text{$\Re z_j = \a_j$ for all $j \in \O$} \right\}.
\]
If $f$ is bounded and $\a_j>0$ for each $j\in\O$, then Cauchy's inequality and the maximum principle imply that the supremum in \eqref{Hdef} is finite, and $H_\O f$ is a decreasing function of each variable $\a_j$ for $j \in \O$.

We define
\begin{equation} \label{BOnorm}
\|f\|_{\B^n_\O} := \int_{\R_+^k} (H_\O f)(\aa_\O) \, d\aa_\O,
\end{equation}
where $d\aa_\O$ denotes Lebesgue measure on $\R_+^k$.   This integral may be infinite.   When $\O$ is the empty set, $H_\emptyset f$ and $\|f\|_{\B^n_\emptyset}$ are both equal to the $H^\infty$-norm $\|f\|_\infty$ of $f$.  

We define $\B^n$ to be the vector space of all holomorphic functions $f$ on $\C_+^n$ such that $\|f\|_{\B^n_\O}$ is finite for all $\O \in \P_n$, so that $H_\O f$ is integrable over $\R_+^k$.  Then $\|\cdot\|_{\B^n_\O}$ is a seminorm on $\B^n$, and there is a norm on $\B^n$ defined by
\begin{equation} \label{Bnnorm}
\|f\|_{\B^n} := \sum_{\O \in \P_n} \|f\|_{\B_\O^n}.
\end{equation}
For $n=1$, this definition agrees with the norm on $\B^1$ defined in \cite[p.33]{BGT}, and, for $n=2$, it agrees with the norm on $\B^2$ defined in \cite[p.37]{Kob}.

  The definitions of $\B^n$ and its norm are clearly invariant under permutation of the variables.   For ease of presentation, in a proof of any statement involving one set $\Omega \in \P_n$, we may choose to assume that $\Omega=I_k$ or $\O^c = I_k$.

A useful property of functions $f\in\B^n$ is that they are bounded and uniformly continuous on $\C_+^n$, so they extend to bounded, uniformly continuous functions on $\ov{\C}_+^n$.  This was noted for $n=1$ in \cite[Proposition 2.1]{BGT} and shown for $n=2$ in \cite[Proposition 3.2.4]{Kob}.  In general, the uniform continuity can be proved by elementary methods (as in \cite{Kob}) or by a special case of an argument in the proof of Remark \ref{3.5}(1).   As a preliminary taste, we will show here that both properties can be deduced using only the finiteness of the norms $\|f\|_{\B^n_\O}$ for every singleton subset $\Omega$ of $I_n$.   However this is an exceptional case, and we need to include the $H^\infty$-norm in the definition of $\|f\|_{\B^n}$ in order that Proposition \ref{3.2.1} is true and later proofs can be simplified.  

\begin{prop} \label{3.2.4}
Let $f$ be a holomorphic function on $\C_+^n$, and assume that $\|f\|_{\B^n_\O}$ is finite for every singleton subset $\Omega$ of $I_n$.    Then $f$ is bounded and uniformly continuous on $\C_+^n$.
\end{prop}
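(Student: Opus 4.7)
The plan is to exploit a telescoping identity across the $n$ coordinates together with approximation by translates, reducing everything to one-variable line integrals. Write $H_j := H_{\{j\}}f$, which by hypothesis is integrable on $(0,\infty)$ and so in particular decays to $0$ at infinity in an integrable/averaged sense.

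For boundedness, fix $\z \in \C_+^n$ and $T>0$ and telescope
$$f(\z + T\1) - f(\z) = \sum_{j=1}^n \bigl[f(\ldots, z_j+T, \ldots) - f(\ldots, z_j, \ldots)\bigr],$$
where in the $j$th bracket the coordinates with index $k<j$ are replaced by $z_k + T$ and those with $k>j$ remain as $z_k$. Each bracket is a line integral of $D_j f$ along a horizontal segment in the $j$th variable, so it has modulus at most $\int_0^T H_j(\Re z_j + s)\,ds \le \|f\|_{\B^n_{\{j\}}}$. The same estimate applied between $T$ and $T'$ shows that $T \mapsto f(\z + T\1)$ is Cauchy, hence converges to a limit $L(\z)$, with $|f(\z) - L(\z)| \le \sum_j \int_{\Re z_j}^\infty H_j$. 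Parametrizing the straight segment from $\z + T\1$ to $\z' + T\1$ and integrating $D_j f$ along it yields
$$|f(\z + T\1) - f(\z' + T\1)| \le \sum_{j=1}^n |z_j - z_j'| \int_0^1 H_j\bigl((1-s)\Re z_j + s\Re z_j' + T\bigr)\,ds,$$
whose right-hand side tends to $0$ as $T \to \infty$ by tail integrability of $H_j$. Hence $L(\z) \equiv L$ is constant, so $|f(\z)| \le |L| + \sum_j \|f\|_{\B^n_{\{j\}}}$, which is the boundedness claim.

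For uniform continuity, once $f$ is bounded the maximum principle applied in each variable separately forces $H_j$ to be a decreasing function of $\Re z_j$ (as noted in the paper). Consider the translates $f_\epsilon(\z) := f(\z + \epsilon\1)$ for $\epsilon > 0$. The identity $D_j f_\epsilon(\z) = D_j f(\z + \epsilon\1)$ together with monotonicity of $H_j$ gives $|D_j f_\epsilon(\z)| \le H_j(\Re z_j + \epsilon) \le H_j(\epsilon)$, so $f_\epsilon$ has uniformly bounded complex partial derivatives on $\C_+^n$ and is hence globally Lipschitz and uniformly continuous. Telescoping one last time yields
$$|f_\epsilon(\z) - f(\z)| \le \sum_{j=1}^n \int_0^\epsilon H_j(\gamma)\,d\gamma,$$
a bound independent of $\z$ which vanishes as $\epsilon\to 0$ by integrability. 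Therefore $f_\epsilon \to f$ uniformly on $\C_+^n$, and $f$ inherits uniform continuity as a uniform limit of uniformly continuous functions.

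The main mild obstacle is the constancy of $L(\z)$: it requires a decay-at-infinity property of $H_j$, which has to come from tail integrability rather than from monotonicity (since monotonicity is itself a consequence of boundedness, not yet established at that point). Everything else amounts to straightforward one-variable line integrals, performed coordinate by coordinate.
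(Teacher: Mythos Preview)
Your argument is largely sound and offers a genuinely different route from the paper, but there is a small gap in the step showing $L(\z)$ is constant. When $\Re z_j = \Re z_j'$ for some $j$, the $j$th summand in your displayed bound collapses to $|z_j - z_j'|\, H_j(\Re z_j + T)$, and tail integrability of $H_j$ does \emph{not} force $H_j(t)\to 0$ pointwise (an integrable function can have tall narrow bumps marching off to infinity). Your own closing remark correctly rules out monotonicity at this stage, but ``tail integrability'' alone does not handle vertical segments. The fix is easy: for $\z,\z'$ with $\Re z_j \ne \Re z_j'$ for every $j$, the substitution $u=(1-s)\Re z_j + s\Re z_j' + T$ turns $\int_0^1 H_j(\cdots)\,ds$ into $|\Re z_j - \Re z_j'|^{-1}\int_{\min+T}^{\max+T} H_j(u)\,du$, which genuinely tends to $0$ by integrability of the tail. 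For arbitrary $\z,\z'$ one passes through an auxiliary $\z''$ whose real parts differ from those of both $\z$ and $\z'$.

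With this patch your boundedness proof works, and your uniform-continuity argument via the Lipschitz translates $f_\epsilon$ is correct as written (boundedness now legitimises the monotonicity of $H_j$ that you invoke). The paper takes a different tack: it moves from $\z$ to $\z'$ one coordinate at a time along line segments whose slope is kept in $[-\cot\theta,\cot\theta]$, replacing a too-steep direct segment by a two-leg ``tent'' whose apex lies to the right. This forces the real part to vary along every path, so each line integral of $D_j f$ is controlled by $\kappa\int_{J_j} H_j$ over a bona fide interval $J_j\subset\R_+$. Boundedness follows since these integrals are dominated by $2\kappa\|f\|_{\B^n_{\{j\}}}$, and uniform continuity follows because the lengths of the $J_j$ shrink with $|\z-\z'|$ and one appeals to absolute continuity of the integral. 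The paper's single path device is more unified and never introduces the auxiliary limit $L$; your translate-approximation for uniform continuity is arguably cleaner once boundedness and hence monotonicity of $H_j$ are available.
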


\begin{proof}
Let $\z$ and $\z'$ be points in $\C_+^n$.   To estimate $|f(\z)-f(\z')|$, we will consider the variables one at a time.   The first step will be from $\z$ to $\z^{[1]} = (z_1',z_2,\dots,z_n)$, as follows.

Let $\theta\in(0,\pi/2)$ and $\kappa = \sec\theta>1$.   Consider the following path $\Gamma_1$ in $\C_+$ from $z_1$ to $z_1'$:
\begin{enumerate}[\rm(a)]
\item the line-segment from $z_1$ to $z_1'$ if the gradient is in $[-\cot\theta,\cot\theta]$,
\item otherwise, two line-segments, one from $z_1$ and the other from $z_1'$, one with gradient $\cot\theta$ and the other with gradient $-\theta$, chosen so that the two lines cross at a point $z_1''$ to the right of $z$ and $z_1'$.
\end{enumerate}
Then
\[
|f(\z)-f(\z^{[1]})| = \left| \int_{\Gamma_1} (D_1f)(\l,z_2,\dots,z_n) \,d\l \right| 
\le \kappa \int_{J_1}  H_{\{1\}}(t) \,dt.
\]
Here $J_1$ represents the interval in $\R_+$ between $\Re z_1$ and $\Re z_1'$ in case (a).  In case (b), it represents both the interval between $\Re z_1$ and $\Re z_1''$ and the interval between $\Re z_1'$ and $\Re z_1''$, with the two integrals added together.   
%Hence,
%\[
%|f(\z)- f(\z^{[1]})| \le 2 \kappa \|f\|_{\B^n_{\{1\}}} .
%\]
There is a similar estimate for $|f(\z^{[1]})-f(\z^{[2]})|$, where $\z^{[2]} = (z_1',z_2',z_3, \dots, z_n)$, and so on.   Eventually we obtain
\[
|f(\z) - f(\z')| \le \kappa \sum_{j=1}^n \int_{J_j} H_{\{j\}}(t)\,dt \le 2\kappa \sum_{j=1}^n  \|f\|_{\B^n_{\{j\}}} < \infty.
\]
By fixing $\z'$, this proves that $f$ is bounded.    By letting $\theta\to0$, one may conclude that the estimate also holds for $\kappa=1$.

To establish uniform continuity, we fix $\theta \in (0,\pi/2)$ and thereby we fix $\kappa>1$.    Take $\delta>0$ and assume that $|\z-\z'|<\delta$.    Then the length of each interval involved in the sets $J_j$ is at most $\delta \cot\theta$.    Since the functions $H_{\{j\}}$ are integrable over $\R_+$,  there exists $\delta>0$ so small that all the integrals used above are less than $\ep$, and then we can conclude that $|f(\z)-f(\z')| < 2 \kappa n \ep$ whenever $|\z-\z'|<\delta$.
\end{proof}

We choose the norm $\|\cdot\|_{\B^n}$  because it makes $\B^n$ into a Banach algebra.   There are numerous norms on $\B^n$ which are equivalent to $\|\cdot \|_{\B^n}$.  Examples for $n=1$ may be found in \cite[p.33]{BGT} and for $n=2$ in \cite[Section 3.4]{Kob}.

\begin{prop} \label{3.2.1}
The normed space $(\B^n,\|\cdot\|_{\B^n})$ is a Banach algebra.
\end{prop}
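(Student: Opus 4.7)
The statement splits into three tasks: that $\|\cdot\|_{\B^n}$ is a norm, that $\B^n$ is complete, and that the norm is submultiplicative under pointwise multiplication. The norm property is essentially routine, since each $\|\cdot\|_{\B^n_\O}$ is a seminorm (sub-additivity and positive homogeneity pass through the sup and the Lebesgue integral), and the inclusion of $\|\cdot\|_{\B^n_\emptyset}=\|\cdot\|_\infty$ in the sum ensures separation of points. So the substance of the proof lies in completeness and submultiplicativity.

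For completeness, I would take a Cauchy sequence $(f_k)$ in $\B^n$. The bound $\|\cdot\|_\infty\le\|\cdot\|_{\B^n}$ makes $(f_k)$ Cauchy in $H^\infty(\C_+^n)$, so it converges uniformly on $\C_+^n$ to a bounded holomorphic function $f$. By Cauchy's inequality applied to $f_k-f_m$, the derivative $\D_\O f_m$ converges to $\D_\O f$ locally uniformly, so for each fixed $k$ and each $\z\in W_{\aa_\O}$,
\[
|\D_\O(f_k-f)(\z)|=\lim_{m\to\infty}|\D_\O(f_k-f_m)(\z)|\le \liminf_{m\to\infty}(H_\O(f_k-f_m))(\aa_\O).
\]
Taking the supremum of the left-hand side over $\z\in W_{\aa_\O}$ preserves the bound, and Fatou's lemma then gives
\[
\|f_k-f\|_{\B^n_\O}\le\liminf_{m\to\infty}\|f_k-f_m\|_{\B^n_\O},
\]
which is arbitrarily small for $k$ large. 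Summing over $\O\in\P_n$ yields $f\in\B^n$ and $f_k\to f$ in $\B^n$.

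For submultiplicativity, the central identity is the Leibniz rule
\[
\D_\O(fg)=\sum_{\Psi\subseteq\O}(\D_\Psi f)(\D_{\O\setminus\Psi} g).
\]
Whenever $\Psi\subseteq\O$ one has $W_{\aa_\O}\subseteq W_{\aa_\Psi}$, so $|\D_\Psi f(\z)|\le(H_\Psi f)(\aa_\Psi)$ for every $\z\in W_{\aa_\O}$, and likewise for the $g$-factor. Taking the supremum over $\z\in W_{\aa_\O}$ gives
\[
(H_\O(fg))(\aa_\O)\le\sum_{\Psi\subseteq\O}(H_\Psi f)(\aa_\Psi)\,(H_{\O\setminus\Psi} g)(\aa_{\O\setminus\Psi}).
\]
Since $\aa_\Psi$ and $\aa_{\O\setminus\Psi}$ are disjoint blocks of the coordinates of $\aa_\O$, Fubini yields $\|fg\|_{\B^n_\O}\le\sum_{\Psi\subseteq\O}\|f\|_{\B^n_\Psi}\|g\|_{\B^n_{\O\setminus\Psi}}$. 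Summing over $\O\in\P_n$ and reindexing by disjoint pairs $(\Psi,\Phi)$ dominates the total by $\sum_{\Psi,\Phi\in\P_n}\|f\|_{\B^n_\Psi}\|g\|_{\B^n_\Phi}=\|f\|_{\B^n}\|g\|_{\B^n}$, which is the submultiplicativity bound; in particular $fg\in\B^n$.

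The main obstacle I anticipate is the completeness argument, specifically the interchange of the supremum defining $H_\O$ with the limit in $m$; this is resolved by the Fatou-style pointwise bound followed by integration. The algebra property reduces to careful bookkeeping with the Leibniz formula and Fubini once the inclusion $W_{\aa_\O}\subseteq W_{\aa_\Psi}$ is observed.
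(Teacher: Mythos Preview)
Your proposal is correct and follows essentially the same approach as the paper. The submultiplicativity argument is identical in structure: Leibniz rule, the inclusion $W_{\aa_\O}\subseteq W_{\aa_\Psi}$ to factor the suprema, Fubini, and the reindexing over disjoint pairs. For completeness the paper simply cites the $n=1,2$ cases as a routine extension, whereas you actually spell out the Fatou-based argument; your version is more explicit but not different in substance.
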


\begin{proof}
The proof of completeness of the space is a straightforward, but uninteresting, extension of the proofs for $\B^1$ in \cite[Proposition 2.3]{BGT} and $\B^2$ in \cite[Proposition 3.2.1]{Kob}.

To show that $\B^n$ is a Banach algebra, we need to show that, for any $f,g \in \B^n$, the product $fg$ satisfies $\|fg\|_{\B^n} \le \|f\|_{\B^n}\|g\|_{\B^n}$.   

Let $\Omega \in \P_n$.  By the product rule,
\[\D_\O(fg) = \sum_{\Psi \subseteq \O} (\D_\Psi  f) (\D_{\O \setminus \Psi}g).
\]
For $\Psi \subseteq \Omega$, 
\begin{align*}
\lefteqn{\null\hskip-30pt \sup \{|(\D_\Psi f)(\D_{\O \setminus \Psi}g)(\z)| : \z\in \C_+^n, \Re z_j = \a_j \,(j \in \Omega)\} \hskip20pt} \\
&\null\hskip15pt\le
\sup \{|(\D_\Psi f)(\z))| :\z \in \C_+^n,  \Re z_j = \a_j \,(j \in \Psi)\} \\
&\null\hskip30pt \times
\sup \{|(\D_{\O \setminus \Psi}g)(\z)| : \z \in \C_+^n, \Re z_j = \a_j \,(j \in \O \setminus \Psi)\}.
\end{align*}
The variables $\a_j$ for $j \in \Psi$ separate from those for $j \in \O\setminus\Psi$ on the right-hand side, so integration with respect to Lebesgue measure on $\R_+^n$ shows that
\[
\|fg\|_{\B^n_\O}   \le \sum_{\Psi \subseteq \O} \|f\|_{\B^n_\Psi} \|g\|_{\B^n_{\O\setminus \Psi}}.
\]
Hence
\begin{align*}
\|fg\|_{\B^n} \le \sum_{\O\in\P_n} \sum_{\Psi \subseteq \O} \|f\|_{\B^n_\Psi} \|g\|_{\B^n_{\O \setminus \Psi}} \le \sum_{\Psi\in\P_n} \|f\|_{\B^n_\Psi} \sum_{\Psi\in\P_n} \|g\|_{\B^n_\Psi} = \|f\|_{\B^n} \|g\|_{\B^n}.  
\end{align*}
\vskip-36pt
\end{proof}
\vskip18pt

We note the following facts, which extend the corresponding statements for $n=1$ or $n=2$.     The first result is almost immediate from the definitions.   The other proofs are either similar to those in \cite{BGT} and \cite{Kob}, or, in some cases, they can be deduced from those results.  

\begin{prop} \label{3.2.0}
  Let $f \in \B^n$, $\O\in\mathcal{P}_n$ with $|\O|=k$, and $\zeta_j \in \C_+$ be fixed for $j \in \O^c$.   For $\z \in \C_+^n$, let $\z_\O = (z_j)_{j\in\O}$, and  $\z_\O^+ = (z^*_j)_{j\in I_n}$,
 where 
 \[
 z^{*}_j = \begin{cases} z_j, & j\in\O, \\  \zeta_j, &j\in\O^c.  \end{cases}
 \]
 Let $\Psi$ be a subset of $\O^c$, and
 \[
 g(\z_\O) = (\D_{\Psi} f)(\z_\O^{+}).
 \]
Then $g \in \B^k$.   In particular, the function $\z_\Omega \mapsto f(\z_\O^+)$ is in $\B^k$.
\end{prop}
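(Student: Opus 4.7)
The plan is to verify directly that each seminorm $\|g\|_{\B^k_\Phi}$, for $\Phi$ ranging over the power set of $\O$ (identified with $\P_k$ via the natural order-preserving bijection), is finite. The key observation is that since $\Phi \cap \Psi = \emptyset$ and the $\O^c$-coordinates of $\z_\O^+$ are held at the constants $\zeta_j$, differentiating $g$ in the variables indexed by $\Phi$ is the same as evaluating $\D_{\Phi \cup \Psi} f$ at $\z_\O^+$:
\[
(\D_\Phi g)(\z_\O) = (\D_\Phi \D_\Psi f)(\z_\O^+) = (\D_{\Phi \cup \Psi} f)(\z_\O^+).
\]

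Writing $\gamma_j = \Re \zeta_j > 0$ for $j \in \O^c$, the supremum defining $(H_\Phi g)(\aa_\Phi)$ is taken over $\z_\O$ with $\Re z_j = \a_j$ for $j \in \Phi$, and the corresponding points $\z_\O^+$ have $\Re z_j^* = \gamma_j$ for every $j \in \O^c \supseteq \Psi$. These points therefore satisfy the constraints defining $(H_{\Phi \cup \Psi} f)(\aa_\Phi, \gamma_\Psi)$, yielding the pointwise bound
\[
(H_\Phi g)(\aa_\Phi) \le (H_{\Phi \cup \Psi} f)(\aa_\Phi, \gamma_\Psi).
\]
To integrate the right-hand side over $\aa_\Phi$, I would exploit the monotonicity (from Section \ref{sect2}) of $H_{\Phi \cup \Psi} f$ in each of its variables: for any $\bb_\Psi$ with $0 < \b_j \le \gamma_j$ one has $(H_{\Phi \cup \Psi} f)(\aa_\Phi, \gamma_\Psi) \le (H_{\Phi \cup \Psi} f)(\aa_\Phi, \bb_\Psi)$. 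Averaging in $\bb_\Psi$ over $\prod_{j \in \Psi}(0, \gamma_j]$ and applying Fubini gives
\[
\|g\|_{\B^k_\Phi} \le \int_{\R_+^{|\Phi|}} (H_{\Phi \cup \Psi} f)(\aa_\Phi, \gamma_\Psi) \, d\aa_\Phi \le \frac{\|f\|_{\B^n_{\Phi \cup \Psi}}}{\prod_{j \in \Psi} \gamma_j} < \infty.
\]
The residual case $\Phi = \emptyset$ reduces to bounding $\|g\|_\infty$, which follows from Cauchy's inequality applied to $f$ in the $\Psi$-variables: $\|g\|_\infty \le \|f\|_\infty / \bigl(2^{|\Psi|} \prod_{j \in \Psi} \gamma_j\bigr)$. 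Summing the bounds over all $\Phi \subseteq \O$ then delivers $\|g\|_{\B^k} < \infty$. The special case $\Psi = \emptyset$ giving $\z_\O \mapsto f(\z_\O^+) \in \B^k$ is immediate, since then the inequality collapses to $\|g\|_{\B^k_\Phi} \le \|f\|_{\B^n_\Phi}$.

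The only point requiring more than an unwinding of definitions is the monotonicity-and-averaging step, which converts the global integrability of $H_{\Phi \cup \Psi} f$ into the pointwise finiteness of its restriction at the fixed value $\gamma_\Psi$ in the $\Psi$-coordinates. Everything else — the commutation of $\D_\Phi$ with evaluation at fixed $\O^c$-coordinates, and the comparison of supremum domains — is routine.
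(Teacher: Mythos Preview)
Your proof is correct. The key ingredients---the identity $(\D_\Phi g)(\z_\O)=(\D_{\Phi\cup\Psi}f)(\z_\O^+)$, the domination $(H_\Phi g)(\aa_\Phi)\le (H_{\Phi\cup\Psi}f)(\aa_\Phi,\gamma_\Psi)$, and the passage from global integrability of $H_{\Phi\cup\Psi}f$ to finiteness at the fixed value $\gamma_\Psi$ via the monotonicity coming from the maximum principle---are exactly those used in the paper's proof. Your ``residual'' treatment of $\Phi=\emptyset$ via Cauchy's inequality is not actually needed, since your general averaging argument already covers it (with a slightly worse constant), but it does no harm.

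The organizational difference is this: the paper reduces by symmetry to freezing one variable at a time (taking $\O=I_{n-1}$, $\Psi\subseteq\{n\}$) and then iterates, whereas you handle arbitrary $\O$ and $\Psi$ in a single pass. Correspondingly, the paper extracts finiteness at each fixed $\a_n$ by the route ``Fubini gives a.e.\ integrability in $\a_n$, then monotonicity upgrades a.e.\ to all $\a_n$'', while you invert the order and average the monotone bound over $\prod_{j\in\Psi}(0,\gamma_j]$ to obtain the explicit estimate $\|g\|_{\B^k_\Phi}\le \|f\|_{\B^n_{\Phi\cup\Psi}}/\prod_{j\in\Psi}\gamma_j$. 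Your route is slightly more direct and has the bonus of producing a quantitative bound; the paper's iteration makes the reduction to the one-variable case more visible. The underlying mechanism is the same.
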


\begin{proof}
It suffices to consider the case when $\O=I_{n-1}$ and show that $g \in \B^{n-1}$, in the cases when $\Psi$ is empty or $\Psi=\{n\}$.   Then, we can conclude the result for general $\O$ by the symmetry of the variables, and considering the variables in $\O^c$ successively.

Let $\Upsilon \in\P_{n-1}$.   If $\Psi$ is empty, then it is clear that $(H_{\U}g)(\aa_{\U}) \le (H_{\U}f)(\aa_{\U})$, and so $g \in \B^{n-1}$.   In the case when $\Psi=\{n\}$, we consider $\zeta_n\in\C_+$, $\U\in\P_{n-1}$ with $|\U|=m$ and $\U^+ = \U \cup\{n\}$.
Since $f \in \B^n$, the function
\[
(\aa_\U,\a_n) \mapsto   (H_{\U^+}f) (\aa_\U,\a_n)
\]
 is integrable over $\R_+^{m+1}$.  By Fubini's theorem,  the function
 \[
 \aa_\U \mapsto (H_{\U^+}f)(\aa_\U,\a_n)
 \]
 is integrable over $\R_+^{m}$ for almost all $\a_n \in (0,\infty)$.   
 By the maximum principle, these functions of $\aa_\U$  form a family of non-negative functions which are non-increasing with respect to $\a_n$, and the functions are integrable over $\R_+^{m}$ for almost all $\a_n>0$, so they are integrable for all $\a_n>0$.   
 Moreover, 
 \[
 (H_\U g)(\aa_\U) \le (H_{\U^+}f)(\aa_\U, \Re \zeta_n),
\]
so $H_\U g$ is integrable.
\end{proof}

Let $f \in \B^n$, and $\O \in \P_n$.  The following result was shown in \cite[Proposition 2.2]{BGT} for $n=1$, and in \cite[Proposition 3.2.2]{Kob} for $n=2$.    The existence of iterated limits follows by applying the one-variable result repeatedly, but some additional arguments are needed to obtain a limit over several variables simultaneously.

\begin{prop} \label{3.2.2}
Let $f \in \B^n$, $\O \in \P_n$, $|\O|=k$, and $\z_\Omega = (z_j)_{j\in\O} \in \C_+^k$.   Then the following limit exists in $\C$:
\[
f_\O(\z_\O) :=\lim_{\Re z_j \to \infty, j \in \O^c} f(\z).
\]
Moreover, $f_\O \in \B^k$, and the map $f \mapsto f_\Omega$ is bounded and linear from $\B^n$ to $\B^k$.
\end{prop}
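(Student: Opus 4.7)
The plan is to promote the one-variable existence of the limit (which handles iterated limits by applying the $n=1$ case repeatedly) to a genuine simultaneous limit, by finding a Cauchy estimate that is uniform in the frozen coordinates $\z_\O$, and then to read off holomorphy and the norm bound from this uniform convergence. By the permutation invariance of $\B^n$, I shall assume $\O = I_k$ and write $\z = (\z_\O, \w)$ with $\w \in \C_+^{n-k}$ indexed by $\O^c$.

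First I would fix $\z_\O \in \C_+^k$, set $g(\w) := f(\z_\O, \w)$, and observe that $(H_{\{j\}} g)(\a) \le (H_{\{j\}} f)(\a)$ for each $j \in \O^c$, so $g$ satisfies the hypotheses of Proposition~\ref{3.2.4}. Applying the path construction from the proof of that proposition one $\O^c$-coordinate at a time shows that, for any $\w, \w' \in \C_+^{n-k}$ with $\min_{j\in\O^c}\Re w_j$ and $\min_{j\in\O^c}\Re w'_j$ both at least $M$, the intervals $J_j$ arising in that argument lie in $[M,\infty)$, and hence
\[
|f(\z_\O, \w) - f(\z_\O, \w')| \le \kappa \sum_{j \in \O^c} \int_M^\infty (H_{\{j\}} f)(t)\,dt.
\]
Integrability of each $H_{\{j\}} f$ on $\R_+$ forces the right-hand side to zero as $M \to \infty$, uniformly in $\z_\O$. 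The Cauchy criterion then yields the limit $f_\O(\z_\O)$, with convergence uniform on $\C_+^k$.

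For the second assertion, each slice $\z_\O \mapsto f(\z_\O,\w)$ is holomorphic on $\C_+^k$; since the convergence is uniform along any sequence $\w \to \infty$, Weierstrass's theorem delivers holomorphy of $f_\O$ together with the formula $(\D_\Psi f_\O)(\z_\O) = \lim (\D_\Psi f)(\z_\O, \w)$ locally uniformly for each $\Psi \subseteq \O$. Since $|(\D_\Psi f)(\z)| \le (H_\Psi f)(\aa_\Psi)$ with $\aa_\Psi = (\Re z_j)_{j \in \Psi}$ independently of the other coordinates, passing to the limit and then taking the supremum over $\z_\O \in \C_+^k$ with $\Re z_j = \a_j$ for each $j \in \Psi$ gives $(H_\Psi f_\O)(\aa_\Psi) \le (H_\Psi f)(\aa_\Psi)$. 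Integrating over $\R_+^{|\Psi|}$ and summing over $\Psi \in \P_k$ yields
\[
\|f_\O\|_{\B^k} \le \sum_{\Psi \subseteq \O} \|f\|_{\B^n_\Psi} \le \|f\|_{\B^n},
\]
so $f_\O \in \B^k$ and $f \mapsto f_\O$ is bounded. Linearity is automatic from linearity of limits.

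The main obstacle is the first step: the iterated limit is immediate from the $n=1$ case applied repeatedly, but the simultaneous limit requires a tail estimate that is uniform in the frozen coordinates. The reason the construction survives is that the intervals $J_j$ appearing in the proof of Proposition~\ref{3.2.4} depend only on the coordinates being moved, so freezing $\z_\O$ leaves the estimate unaffected and permits the tail $\int_M^\infty(H_{\{j\}}f)(t)\,dt$ to drive the Cauchy property uniformly.
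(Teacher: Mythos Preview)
Your argument is correct. The key estimate via the path construction of Proposition~\ref{3.2.4} is sound: when $\min_{j\in\O^c}\Re w_j,\min_{j\in\O^c}\Re w'_j\ge M$, the intervals $J_j$ indeed lie in $[M,\infty)$ (in case~(b) the crossing point $z_j''$ is to the right of both endpoints), so the tail integrals control the oscillation uniformly in $\z_\O$. Strictly speaking, in case~(b) the two sub-intervals may overlap and the bound acquires a harmless factor of~$2$, but this does not affect the Cauchy conclusion. The passage to holomorphy via Weierstrass and the pointwise inequality $(H_\Psi f_\O)(\aa_\Psi)\le(H_\Psi f)(\aa_\Psi)$ are both clean.

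The paper organises the same idea differently: it peels off one $\O^c$-coordinate at a time, using the simpler horizontal-ray estimate $|f(\z,\zeta)-g_1(\z)|\le\int_{\Re\zeta}^\infty(H_{\{n\}}f)(t)\,dt$ to get uniform convergence to an intermediate limit $g_1\in\B^{n-1}$, and then iterates. Your route trades the induction for a direct Cauchy criterion over all $\O^c$-coordinates simultaneously, at the cost of invoking the more elaborate two-segment paths of Proposition~\ref{3.2.4}. Both rest on the same engine---integrability of the functions $H_{\{j\}}f$---and yield the same norm bound $\|f_\O\|_{\B^k}\le\|f\|_{\B^n}$; yours is marginally shorter, the paper's uses only horizontal integration and Vitali in place of Weierstrass.
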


\begin{proof}
By symmetry of the variables, it suffices to consider the case when $\O=I_k$.   First we show the result when $k=n-1$.     For this case, we use the notation $\z$ to denote $(z_1,\dots,z_{n-1}) \in \C_+^{n-1}$, and $\zeta$ for the variable $z_n$.

By Proposition \ref{3.2.0}, for each $\z\in\C_+^{n-1}$ the function $\zeta \mapsto f(\z,\zeta)$ is in $\B^{1}$.   By \cite[Proposition 2.2]{BGT}, we know that
\[
g_1(\z) := \lim_{\Re\zeta\to\infty} f(\z,\zeta)
\]
exists.   In fact,
\begin{align}  \label{zet}
\left| f(\z,\zeta) - g_1(\z) \right| 
&\le \left| \int_{\Re\zeta}^\infty (D_n f)(\z,t + i\Im \zeta) \,dt \right| 
\le \int_{\Re \zeta}^\infty (H_{\{n\}}f)(t) \,dt \to 0, 
\end{align}
uniformly with respect to $\z\in\C_+^{n-1}$, as $\Re \zeta \to\infty$.   
 It follows from Vitali's theorem for several variables that $g_1$ is holomorphic, and the partial derivatives of $f(\cdot,\zeta)$ converge (pointwise) to the corresponding derivatives of $g_1$ as $\Re\zeta\to\infty$. 
   This implies that $H_\U g_1 \le H_\U f$ on $(0,\infty)^{|\U|}$ for all $\U\in\P_{n-1}$, and therefore $g_1 \in \B^{n-1}$ and  $\|g_1\|_{\B^{n-1}} \le \|f\|_{\B^n}$.
 
 If $\O=I_k$ where $k \in\{0,1,\dots,n-2\}$, one may apply the argument above to $g_1$ to show that there is a function $g_2 \in \B^{n-2}$ such that
 \[
 \lim_{\Re \zeta\to\infty} \big(g_1(\z',\zeta)  - g_2(\z')\big) = 0,
 \]
 uniformly for $\z' \in \C_+^{n-2}$.   In combination with \eqref{zet}, it follows that the statement is proved for $\Omega = I_{n-2}$.   Continuing in this way, it can be proved for $\O=I_k$. \end{proof}

\begin{rems} \label{3.2.+}
1. 
In Proposition \ref{3.2.2},  $f_{I_n} = f$ and $f_\emptyset$ is the constant function $\lim_{\Re z_j\to\infty,\,j\in I_n} f(\z)$. 

\noindent
2.  Note that the limits involved in Proposition \ref{3.2.2} are as $\Re z_j \to \infty$.    Functions $f \in \B^n$ may not have limits as $|z_j|\to\infty$.

\noindent 
3. The function $f_\Omega$ in Proposition \ref{3.2.2} may be denoted in various ways.   In particular, we may replace the variable $z_j$ by the symbol $\infty$ for each $j \in\O^c$, and then regard $f$ as being defined on an extended domain. 
Proposition \ref{3.2.0} remains true if $\zeta_j=\infty$ for some $j \in \O^c$.   The function $f_\O$ is holomorphic, by a simple application of Vitali's theorem, and then the rest of the proof remains valid.  
\end{rems}
 
We denote by $\B_0^n$ the ideal of $\B^n$ consisting of those functions $f \in \B^n$ such that  $f_\O$ is identically zero for all proper subsets $\O$ of $I_n$.   It suffices that
\[
\lim_{\Re z_j \to \infty} f(\z) = 0
\]
whenever $j \in I_n$ and $z_k \in \C_+$ are fixed for each $j \ne k$.   Adopting the practice described in Remark \ref{3.2.+},  it suffices that $f \in \B_0^n$ if and only if $f(\z)=0$ whenever $z_j=\infty$ for at least one $j\in I_n$.   We stress that the two sentences above mean precisely the same.

The seminorm $\|\cdot\|_{\B^n_{I_n}}$ is a norm on $\B_0^n$, and we denote this seminorm by $\|\cdot\|_{\B_0^n}$.
Now we show that the norms $\|\cdot\|_{\B_0^n}$ and $\|\cdot\|_{\B^n}$ are equivalent on $\B^n_0$.   In addition, finiteness of $\|f\|_{\B^n_0}$, boundedness of $f$,  and the condition above of vanishing (at infinity, to the right) imply that $f \in \B^n_0$.

\begin{prop}\label{Pr1}
Let $f \in H^\infty(\C_+^n)$, $ \|f\|_{\B^n_0}<\infty$ and assume that
\begin{equation} \label{Hlim}
\lim_{{\rm Re} z_j \to \infty}f(\mathbf{z})=0
\end{equation}
whenever $j\in I_n$ and $z_k$ is fixed for all $k\ne j$.
Then $f \in \mathcal B^n$ and  $\|f\|_{\mathcal B^n}\le 2^n\|f\|_{\mathcal B_0^n}$.
\end{prop}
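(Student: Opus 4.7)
The plan is to reduce everything to bounds involving the top derivative $\D_{I_n} f$. Specifically, I aim to prove the pointwise inequality $\|f\|_{\B^n_\O} \le \|f\|_{\B^n_0}$ for every $\O \in \P_n$; summing over the $2^n$ subsets of $I_n$ then yields $\|f\|_{\B^n} \le 2^n \|f\|_{\B^n_0}$.

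The key device is a representation of $\D_\O f$ as an iterated integral of $\D_{I_n} f$ over the ``missing'' directions $\O^c$. To set this up, I would first verify that for every $\O \in \P_n$ and every $j \in \O^c$,
\[
(\D_\O f)(\z) \to 0 \quad \text{as } \Re z_j \to \infty,
\]
with the remaining coordinates of $\z$ held fixed. This follows by writing $\D_\O f$ as an iterated Cauchy integral over small circles centred at the points $z_i$ ($i \in \O$); the integrand is $f$ evaluated at a point whose $j$-th coordinate is still $z_j$, which tends to $0$ by \eqref{Hlim}, and dominated convergence using $f \in H^\infty(\C_+^n)$ completes the argument.

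With this vanishing in hand, I would apply the fundamental theorem of calculus successively in each variable $j \in \O^c$ (boundary terms at infinity vanishing at each stage) to obtain
\[
(\D_\O f)(\z) = (-1)^{|\O^c|} \int_{\prod_{j\in\O^c} [\Re z_j,\infty)} (\D_{I_n} f)(\z^*) \, d\t_{\O^c},
\]
where $\z^*$ agrees with $\z$ on $\O$ and has $z_j^* = t_j + i \Im z_j$ for $j \in \O^c$. Taking absolute values, bounding $|(\D_{I_n} f)(\z^*)|$ by $(H_{I_n} f)(\aa)$ (where $\a_j = \Re z_j$ for $j \in \O$ and $\a_j = t_j$ for $j \in \O^c$), and finally taking the supremum over $\z$ with $\Re z_j = \a_j$ fixed for $j \in \O$ (which forces the lower limits of integration down to $0$), yields
\[
(H_\O f)(\aa_\O) \le \int_{\R_+^{|\O^c|}} (H_{I_n} f)(\aa_\O, \t_{\O^c}) \, d\t_{\O^c}.
\]

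Integrating in $\aa_\O$ over $\R_+^{|\O|}$ and applying Fubini produces $\|f\|_{\B^n_\O} \le \|f\|_{\B^n_0}$ for every $\O \in \P_n$; the case $\O = \emptyset$ recovers in particular $\|f\|_\infty \le \|f\|_{\B^n_0}$, consistent with $f \in H^\infty(\C_+^n)$. Summing the $2^n$ inequalities over $\P_n$ then gives both $f \in \B^n$ and the claimed bound $\|f\|_{\B^n} \le 2^n \|f\|_{\B^n_0}$. The main subtlety is the vanishing of the intermediate derivatives $\D_\O f$ in each of the ``free'' directions; once that is confirmed, the iterated integration by parts produces no boundary contributions and the remaining estimates are routine.
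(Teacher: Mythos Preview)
Your proposal is correct and follows essentially the same route as the paper's proof: both establish that $\D_\Psi f(\z)\to 0$ as $\Re z_j\to\infty$ for $j\notin\Psi$ (the paper phrases this via ``Cauchy's inequality'', you via the Cauchy integral representation and dominated convergence---the same mechanism), then iterate the fundamental theorem of calculus over the directions in $\O^c$ to express $\D_\O f$ as an integral of $\D_{I_n} f$, and finally bound and integrate to obtain $\|f\|_{\B^n_\O}\le\|f\|_{\B^n_0}$ for every $\O\in\P_n$. Your handling of the supremum step (pushing the lower limits of integration to $0$) corresponds to the paper's use of the maximum principle; the arguments are equivalent.
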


\begin{proof}
Let $\Omega \in \mathcal {P}_n$, $\Omega\not=I_n$, and $|\O|=k$.
 From the boundedness of $f$, (\ref{Hlim}) and Cauchy's inequality, we infer that 
for all $\Psi\in\P_n$ such that $\O\subseteq\Psi$, any $j\in\O^c$ and $z_k$ fixed for $k\ne j$, 
\[
\lim_{{\rm Re}\, z_j\to\infty}\, \mathcal D_{\Psi}f(\mathbf{z})=0,
\]
and thus
\[
\mathcal D_{\O}f(\mathbf{z})=
\int_{\R_{+}^{n-k}} \mathcal D_{n}f(\mathbf{z}+{\bm \alpha}_{\O^c})\,d{\bm \alpha}_{\O^c}, \qquad \z\in\C_+^n.
\]
By the maximum principle for functions from 
$H^{\infty}(\mathbb C^n)$, we have
\begin{align*}
|\mathcal D_{n}f(\mathbf{z})|&\le
\int_{\R_{+}^{n-k}} 
\sup_{\substack{\lambda_j=z_j,\,j\in \O\\
\Re \lambda_j=\Re z_j, \,j\in \O^c}}
|\mathcal D_{n}f({\bm \lambda}+{\bm \alpha}_{\O^c} )|\,d{\bm \alpha}_{\O^c}\\
&\le
\int_{\R_{+}^{n-k}} 
\sup_{\substack{\lambda_j=z_j,\,j\in\O\\
\Re \lambda_j= 0, \,j\in \O^c}}
|\mathcal D_{n}f({\bm \lambda}+{\bm \alpha}_{\O^c} )|\,d{\bm \alpha}_{\O^c},
\end{align*}
so that
\[
\|f\|_{\B_\O^n} =\sup_{\mathbf{z}\in W_{{\bm \alpha}_{\O} }} \,|\mathcal D_{\O}f(\mathbf{z})|\le
\int_{\R^{n-k}_{+}}\,
\sup_{\mathbf{z}\in W_{{\bm \alpha}_{I_n}}}
|\mathcal D_{n}f(\mathbf{z})|\,d{\bm \alpha}_{\O^c} = \|f\|_{\B^n_0}.
\]
This implies that $f\in\B_0^n$ and 
\[
\|f\|_{\mathcal B^n}=
\sum_{\Omega\in \mathcal{P}_n}
\|f\|_{\mathcal B^n_\Omega}
\le 2^n\|f\|_{\mathcal B_0^n}.  \qedhere
\]
\end{proof}

 Note that $\|\cdot\|_{\B^n_{I_n}}$ is not a norm on $\B^n$, because some functions do not depend on all the $n$ variables. 
  Those functions satisfy $\D_n f = 0$ and hence $\|f\|_{\B^n_{I_n}} = 0$.     However linear combinations $g$ of such functions may depend on all the variables while satisfying $\D_n g = 0$ (see Remark \ref{r2.4}(2) below).   
  Consequently we will decompose $\B^n$ into the direct sum of $2^n$ closed ideals, one for each subset of $I_n$.   
  This was achieved for $\B^1$ in \cite{BGT}, where one summand contained only the constant functions, and also for $\B^2$ in \cite{Kob}, where two summands each contained functions of one of the two variables.   

  Let $f \in \B^n$, and let $\O_f \in \mathcal P_n$ be the subset of $I_n$ representing the variables $z_j$ on which $f$ depends. 
   It is easy to see that they are the variables  for which $D_j f$ is not identically zero.   
   We will use the following terminology:
\begin{enumerate} [(a)]
\item $\O_f$ is the {\it support} of $f$, 
\item $|\O_f|$ is the {\it degree} of $f$,
\item $f$ is {\it elementary} if $f(\z)=0$ whenever $z_j=\infty$ for at least one $j \in\O_f$.
 \end{enumerate}  
 For $\O\in\P_n$, we define subspaces of $\B^n$:
 \begin{align*}
 \B^n_\O &= \left\{ f \in \B^n:  \O_f \subseteq \O \right\} = \left\{f\in\B^n : \text{$\D_{j} f = 0$ for all $j \in \O^c$} \right\}, \\
 \B^n_{\O,0} &= \left\{ f \in \B^n_{\O} : \text{$f(\z)=0$ whenever $z_j=\infty$ for at least one $j\in\O$} \right\}.
 \end{align*} 
  It is easy to see that $\B^n_\O$ and $\B^n_{\O,0}$ are closed subalgebras of $\B^n$.   The spaces $\B^n_{I_n}$ and $\B^n_{I_n,0}$ are the spaces $\B^n$ and $\B^n_0$, respectively, and the space $\B^n_\emptyset$ is the space of constant functions.    Note that
  \begin{enumerate}[(i)]
  \item $f\in\B^n$ is elementary if and only if $f \in \B^n_{\O_f,0}$, and
  \item If $f \in \B^n_0$ and $\D_n f=0$, then $f=0$.
  \end{enumerate}   
 
 For $f \in \B^n$, let 
\[
f_{\rm el}(\z) = \sum_{\O\in\mathcal{P}_n} (-1)^{n-|\Omega|} f_\O(\z_\O),  
\]
where $f_\O(\z_\O)$ is defined in Proposition \ref{3.2.2}.    For example, if $n=2$ then
\[
f_{\rm el}(z_1,z_2) = f(z_1,z_2) - f(z_1,\infty) - f(\infty,z_2) + f(\infty,\infty).
\]
%It is easy to verify that $f_{\rm el} \in \B^n_0$, and the functions $f_\O(\z_\O)$ are all of degree at most $n-1$.    
Then
\[
f = f_{\rm el} + \sum_{\O\in\P_n,\O\ne I_n} (-1)^{n-|\O|-1}f_\O,
\]
where $f_{\rm el}$ is elementary of degree $n$, and each function $f_\O$ for $\O \ne I_n$ is of degree less than $n$.   
The empty set is in $\P_n$, and $f_\emptyset = f_{\emptyset,0}$ is a constant equal to $f(\infty,\dots,\infty)$.

By simple relabelling of the variables, the space $\B_\Omega^n$ can be identified with $\B^k$ and $\B^n_{\Omega,0}$ with  $\B^{k}_{0}$, where $k = |\O|$.   By considering those $f_\O$ for $\O \ne I_n$, and iterating the procedure until we have obtained only elementary functions, we can obtain an {\it elementary decomposition} of $f$ as
\begin{equation} \label{fel}
f = \sum_{\O\in\P_n} f_{\O,0},
\end{equation}
where $f_{\O,0}$ is either identically zero or an elementary function with support $\O$.  

The elementary decomposition of $f \in \B^n$ is unique.   Suppose that $\sum_{\O\in\P_n} f_{\O} = 0$, where $f_\O$ is elementary with support $\O$.   By setting $z_j=\infty$ for all $j \in I_n$, one obtains that the constant function $f_\emptyset = 0$.   By setting $z_j = \infty$ for all $z_j$ except for $j=1$, one obtains that $f_{\{1\}} = 0$.   Continuing in this way one obtains that $f_\O = 0$ for all $\O$.

So far the discussion has been somewhat heuristic.  An alternative description is given in the following formulas, which can be readily verified.

\begin{prop} \label{p2.4}
The Banach algebra $\B^n$ is the direct sum of the closed subalgebras $\B^n_{\O,0}$ for $\O\in\P_n$.    In particular, the following properties hold.
\begin{enumerate}[\rm(i)]
\item  For all $f\in \B^n$,
\[
f(\z) = \sum_{\O\in\P_n} f_{\Omega,0}(\z_\O), \; \text{where} \quad  f_{\O,0}(\z_\O) = \sum_{\Psi\subseteq\O} (-1)^{|\O|-|\Psi|}f_{\Psi}(\z_{\Psi})\in \B^n_{\O,0}.
\]
\item For each $\O\in\P_n$, the map $f\mapsto f_{\O,0}$ for $\O\in\P_n$ is a contraction from $\B^n$ to $\B^{|\O|}$.
\item
For $f\in\B^n$,  $\D_n f=0$ if and only if $f$ is a finite sum of holomorphic functions with degree less than $n$.  In that case,
\[
f(\z) = \sum_{\O\in\mathcal{P}_n,\Omega\ne I_n}  (-1)^{n-|\O|-1} f_\O(\z_\O).
\]
\item \label{p2.4-4} If $g \in H^\infty(\C_+^n)$, $\D_ng=0$, and $g(\z)=0$ whenever $z_k=\infty$ for at least one $k \in I_n$, then $g = 0$.
\end{enumerate}
\end{prop}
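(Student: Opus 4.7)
The plan is to verify the four claims in turn: (i) and (ii) follow from Möbius inversion on the Boolean lattice $\P_n$ combined with Proposition~\ref{3.2.2}; (iii) is then a formal consequence of (i) together with (iv); and (iv) is the analytic heart, handled by induction on $n$ via Cauchy's integral formula and dominated convergence.

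For (i), I would first verify $\sum_{\O\in\P_n}f_{\O,0}=f$ by substituting the stated Möbius formula, exchanging the order of summation, and using
\[
\sum_{\O\supseteq\Psi}(-1)^{|\O|-|\Psi|} = \begin{cases} 1 & \text{if }\Psi=I_n, \\ 0 & \text{otherwise,} \end{cases}
\]
so the double sum collapses to $f_{I_n}=f$. To see $f_{\O,0}\in\B^n_{\O,0}$, I note that each $f_\Psi$ for $\Psi\subseteq\O$ lies in $\B^{|\Psi|}$ by Proposition~\ref{3.2.2}, so the signed sum $f_{\O,0}$ belongs to $\B^n$ and depends only on the variables indexed by $\O$. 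The elementary property is checked by fixing $j_0\in\O$, substituting $z_{j_0}=\infty$, and pairing each $\Psi\subseteq\O$ containing $j_0$ against $\Psi\setminus\{j_0\}$: the substitution converts $f_\Psi$ into $f_{\Psi\setminus\{j_0\}}$ while the paired terms carry opposite signs, so every contribution cancels. For (ii), boundedness of $f\mapsto f_{\O,0}$ then follows from the boundedness of each $f\mapsto f_\Psi$ in Proposition~\ref{3.2.2}, and the contractive estimate rests on the pointwise inequality $|\D_\U f_\Psi(\z_\Psi)|\le H_\U f(\aa_\U)$ for $\U\subseteq\Psi$, obtained by commuting derivatives with the limit defining $f_\Psi$.

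For (iii), the ``$\Leftarrow$'' direction is immediate: any holomorphic function of degree less than $n$ omits some variable $z_j$, so $\D_n$ annihilates it. For ``$\Rightarrow$'', I apply (i) to write $f=\sum_\O f_{\O,0}$; every summand with $\O\ne I_n$ has degree less than $n$ and is annihilated by $\D_n$, so $\D_n f_{I_n,0}=\D_n f=0$. Since $f_{I_n,0}\in\B^n_{I_n,0}\subset H^\infty(\C_+^n)$ vanishes whenever some $z_k=\infty$, part (iv) forces $f_{I_n,0}=0$. The stated formula is then read off from the identity $f=f_{\rm el}+\sum_{\O\ne I_n}(-1)^{n-|\O|-1}f_\O$ already derived in the discussion preceding the proposition.

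For (iv), I would induct on $n$. The base case $n=1$ is trivial: $\D_1 g=Dg=0$ forces $g$ constant, and the limit hypothesis fixes the constant at $0$. For $n\ge2$, set $h=D_2\cdots D_n g$; then $D_1 h=\D_n g=0$, so $h$ is independent of $z_1$. Cauchy's integral formula in the variables $z_2,\ldots,z_n$, with contour circles of radius $\tfrac12\Re z_j$ centered at $z_j$, represents $h(z_2,\ldots,z_n)$ as a fixed integral of $g(z_1,\cdot)$ over contours in $\C_+^{n-1}$; the integrand is dominated uniformly in $z_1$ by $\|g\|_\infty$ divided by a positive continuous function of the integration variables. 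Letting $\Re z_1\to\infty$ and using the hypothesis that $g\to0$ pointwise, dominated convergence gives $h\equiv0$. The same reasoning applies to $D_3\cdots D_n g$, which is independent of $z_2$ because $D_2\cdots D_n g=0$, with $z_2$ as the free variable; iterating yields $D_n g=0$, so $g$ is independent of $z_n$, and the final limit hypothesis forces $g=0$. The main obstacle is this Cauchy-plus-dominated-convergence step: the contour radii must be chosen so that the circles remain inside $\C_+$ for the relevant $z_j$ and so that the dominating function is integrable uniformly as the ``free'' variable tends to infinity.
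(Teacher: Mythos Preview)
The paper offers no proof of this proposition beyond the remark that the formulas ``can be readily verified,'' so there is little to compare against directly. Your arguments for (i), (iii) and (iv) are correct and tidy: the M\"obius inversion handles (i) cleanly, the Cauchy-integral representation plus dominated convergence is exactly the right tool for (iv), and your deduction of (iii) from (i) and (iv) is sound. The paper's surrounding discussion suggests an iterative construction of the decomposition (extract $f_{\rm el}$, then recurse on the lower-degree pieces), whereas your closed M\"obius formula reaches the same endpoint in one step; neither approach has a real advantage here.

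There is, however, a genuine gap in your treatment of (ii). You assert that the contractive estimate ``rests on the pointwise inequality $|\D_\U f_\Psi(\z_\Psi)|\le H_\U f(\aa_\U)$.'' That inequality gives $\|f_\Psi\|_{\B^{|\Psi|}_\U}\le\|f\|_{\B^n_\U}$ for each individual $\Psi$, but $f_{\O,0}$ is a \emph{signed} sum of $2^{|\O|}$ such pieces, and you give no mechanism for avoiding the accumulated factor. In fact the contraction claim as literally stated is false already for $n=1$: take $f(z)=(z+1)^{-1}-\tfrac12$. The range of $(z+1)^{-1}$ on $\C_+$ is the disk $|w-\tfrac12|<\tfrac12$, so $\|f\|_\infty=\tfrac12$; together with $\|f\|_{\B^1_0}=\int_0^\infty(\a+1)^{-2}\,d\a=1$ this gives $\|f\|_{\B^1}=\tfrac32$. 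But $f_{I_1,0}=(z+1)^{-1}$ has $\|f_{I_1,0}\|_{\B^1}=1+1=2>\tfrac32$. What \emph{is} true, and what the paper actually uses in Section~\ref{sect4}, is the seminorm inequality $\|f_{\O,0}\|_{\B^{|\O|}_0}\le\|f\|_{\B^n_\O}$: since only $\Psi=\O$ survives when one applies $\D_\O$ to the M\"obius sum, $\D_\O f_{\O,0}=\D_\O f_\O$, and then your pointwise inequality does the job. So your route gives the boundedness that is needed downstream; only the word ``contraction'' (in the full $\B^{|\O|}$-norm) is unwarranted.
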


\begin{exa} \label{ex2.7}
The space $\LM^n$ of all functions of the form $\LT\mu$, where $\mu$ is a bounded Borel measure on $\R^n_+$ and $\LT\mu$ is its Laplace transform, is a Banach algebra in the norm $\|\LT\mu\|_{HP} = \|\mu\|$, and it is known as the {\it Hille-Phillips algebra}.   It is continuously included in $\B^n$ (see \cite[Section 2.4]{BGT} for the case $n=1$).

Let $f = \LT\mu \in \LM^n$.  Then the elementary decomposition of $f$ is given by $f_{\O,0} = \LT\mu_{\O}$, where $\mu_{\O}$ is the restriction of $\mu$ to the subset
\[
\R_{+,\O} := \prod_{j=1}^n  C_j,    \qquad C_j := \begin{cases} (0,\infty), \; &j\in\O, \\ \{0\}, &j\in\O^c. \end{cases}
\]
\end{exa}

\noindent
\begin{rems} \label{r2.4}
1.  In Propositions \ref{3.2.0}, \ref{3.2.2}, and \ref{p2.4}, instead of letting $\Re z_j$ tend to $\infty$, we could choose to take fixed points $\zeta_j \in \ov\C_+$.   The results remain valid.   Such a change would produce a different elementary decomposition of functions $f\in\B^n$.   However this is not a contradiction of the uniqueness, because the spaces $\B^n_{\O,0}$ would also change.

\noindent
2.   In the paper \cite{Mar}, Marti constructed a functional calculus for two commuting closed operators under certain conditions.  A subsequent short paper by Caradus \cite{Car} showed that the conditions could be put in a better form and pointed out that the construction could then be extended to $n$-tuples of commuting operators in a natural way.   Marti's construction involved a decomposition of a similar type to \cite[Section 3.5]{Kob}, although the side conditions are different.   Caradus did not specify a decomposition but it would undoubtedly have been of similar form to the elementary decomposition in Proposition \ref{p2.4}(i).    Ichinose \cite[p.235]{Ich} also used a similar decomposition in the context of two operators.
\end{rems}

\section{Reproducing formulas and shifts} \label{sect3}

\subsection* {Reproducing formulas}
A reproducing formula for $f \in \B^1$ was given in \cite[Proposition 2.20]{BGT}, and a version for $f \in \B^2$ was given in \cite[Proposition 5.1.1]{Kob}.   In \cite[Proposition 3.7]{BGT3}, some formulas were obtained and the specific formula with $s=1$ can be applied with $g=f'$ if $f\in\B^1$.   We state that formula here.
 
\begin{prop} \label{jj}
Let $g$ be a holomorphic function on $\C_{+}$ such that
\begin{equation*}
\int_{\C_{+}}\frac{|g(z)|\, {\Re}\,z}{|z|^2}\,dS_1(z) <  \infty.
\end{equation*}
Define $Q g$ on $\C_+$ by
\[
(Qg)(z) = -\frac{2}{\pi} \int_{\C_+} g(\l) \frac{\Re\l}{(z+\ov\l)^2} \, dS_1(\l).
\]
Then $Qg$ is holomorphic, and $g = (Qg)'$.
\end{prop}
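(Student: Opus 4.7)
The plan is to establish holomorphy of $Qg$ by differentiation under the integral, and then verify the identity $g = (Qg)'$ by rewriting the derivative as an iterated integral in $\l = \a + i\b$, evaluating the inner $\b$-integral by residues, and finishing with an integration by parts in $\a$.

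For $z$ in any compact subset of $\C_+$ and any $\l \in \C_+$, the estimate $\Re\l/|z+\ov\l|^2 \le C_z\, \Re\l/|\l|^2$ (with $C_z$ locally uniform in $z$) combined with the hypothesis on $g$ yields absolute and locally uniform convergence of the defining integral of $Qg$. The same bound applied to $(z+\ov\l)^{-3}$ licenses differentiation under the integral, giving
\[
(Qg)'(z) = \frac{4}{\pi} \int_{\C_+} g(\l)\, \frac{\Re\l}{(z+\ov\l)^3}\, dS_1(\l),
\]
which in particular shows that $Qg$ is holomorphic on $\C_+$. Passing to coordinates $\l = \a + i\b$ and applying Fubini, one has
\[
(Qg)'(z) = \frac{4}{\pi}\int_0^\infty \a \left( \int_{-\infty}^\infty \frac{g(\a + i\b)}{(z+\a-i\b)^3}\, d\b \right) d\a.
\]

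For fixed $\a > 0$ the inner integrand extends meromorphically in $\b$ to the strip $\{\Im \b < \a\}$, with a single triple pole at $\b_0 = -i(z + \a)$ lying in the lower half-plane. Closing the real line by a large lower semicircle (clockwise orientation), the residue at $\b_0$ is computed from $\a + i\b_0 = z + 2\a$ and the Laurent expansion of $(-i(\b-\b_0))^{-3} g(\a + i\b)$, producing
\[
\int_{-\infty}^\infty \frac{g(\a+i\b)}{(z+\a-i\b)^3}\, d\b = \pi\, g''(z + 2\a).
\]
Substituting and changing variables $u = 2\a$ reduces $(Qg)'(z)$ to $\int_0^\infty u\, g''(z+u)\, du$, and two integrations by parts collapse this to $g(z)$.

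The main technical obstacle is that the hypothesis is only the one-sided integrability condition $\int_{\C_+} |g(\l)|\,\Re\l/|\l|^2\, dS_1(\l) < \infty$. This does not automatically yield the pointwise decay of $g$ and $g'$ needed to kill the boundary terms in the integration by parts, nor the control on the growth of $g$ in the strip $\{\Im\b < \a\}$ needed to discard the lower semicircle arc when computing the $\b$-integral. I would handle both difficulties simultaneously by first proving the identity on the dense subspace generated by the resolvent powers $\l \mapsto (\l + \mu)^{-k}$ with $\mu \in \C_+$ and $k \ge 2$, for which all required decay is manifest and both sides can be computed explicitly. The general case then follows by an approximation of the form $g_\ep(\l) := g(\l + \ep)/(1 + \ep\l)^2$: these approximants are holomorphic on $\C_+$, have decay at infinity, converge pointwise to $g$ as $\ep \to 0$, and admit an $\ep$-uniform envelope compatible with the hypothesis, so dominated convergence transfers the identity from $g_\ep$ to $g$.
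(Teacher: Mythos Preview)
The paper does not itself prove this proposition; it is quoted from \cite[Proposition 3.7]{BGT3}. Your computational route is natural and the formal identities are correct: differentiation under the integral is justified by the locally uniform majorant $\Re\lambda/|z+\overline\lambda|^3 \le C_z\,\Re\lambda/|\lambda|^2$, the residue evaluation of the $\beta$-integral does give $\pi g''(z+2\alpha)$ when it can be carried out, and the iterated integration by parts in $u=2\alpha$ collapses $\int_0^\infty u\,g''(z+u)\,du$ to $g(z)$ once the boundary terms vanish.

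The gap is in how you close the two technical obstacles. The appeal to a ``dense subspace generated by resolvent powers'' is not meaningful here: there is no ambient Banach space, only a weighted $L^1$ condition on $g$, so density and passage to limits have no obvious sense. Your alternative, the approximants $g_\epsilon(\lambda) = g(\lambda+\epsilon)/(1+\epsilon\lambda)^2$, are asserted to ``have decay at infinity,'' but that already requires a priori polynomial growth of $g$, which you have not established; and the ``$\epsilon$-uniform envelope'' needed for dominated convergence in passing from $(Qg_\epsilon)'$ to $(Qg)'$ is not produced.

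There is a clean fix that makes the direct argument work with no approximation. The sub-mean-value inequality on the disk $D(\lambda,\Re\lambda/2)$, combined with the integrability hypothesis, yields the pointwise bound
\[
|g(\lambda)| \;\le\; C\,\frac{|\lambda|^2}{(\Re\lambda)^3},
\]
with $C$ a fixed multiple of the assumed integral. On the lower semicircle $\beta = Re^{i\theta}$, $\theta\in[-\pi,0]$, one has $\Re(\alpha+i\beta)=\alpha+R|\sin\theta|$, and a short computation using $\sin\phi\ge 2\phi/\pi$ shows the arc contribution is $O\bigl(1/(R\alpha^2)\bigr)\to 0$; so the residue step is valid for every $\alpha>0$ for which the $\beta$-integral converges, which by Fubini is almost every $\alpha$. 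The same bound, together with Cauchy's inequality for $g'$, gives $|g(z+u)|=O(1/u)$ and $|u\,g'(z+u)|=O(1/u)$ as $u\to\infty$, so both boundary terms in the integration by parts vanish. With this one estimate in place, your proof goes through as written.
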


We will present the corresponding statement for functions of $n$ variables in Proposition \ref{j}. 
We use the following kernel functions:
\begin{equation*}
K(z,\lambda)=-\frac{2}{\pi(z+\lambda)^2}, \; z, \l \in \C_+,
\qquad \mathcal{K}_n(\z,\lb)= \prod_{j=1}^n K(z_j,\l_j), \; \z,\lb\in\C_+^n.
\end{equation*}

Let $g$ be a holomorphic function on $\C_{+}^n$ such that
\begin{equation}\label{CR1}
\int_{\C_{+}^n}|g(\lb)|\prod_{j=1}^n\frac{\Re\l_j}{|\l_j|^2}\,dS_n(\lb)<\infty.
\end{equation}Define the function $\mathcal{Q}_n g$ on $\C_+^n$ by
\[
(\mathcal{Q}_ng)(\z) := \int_{\C_+^n} \mathcal{K}_n(\z,\ov\lb) g(\lb) \prod_{j=1}^n \Re\l_j \,dS_n(\lb), \qquad \z\in\C_+^n.
\]
This function is holomorphic on $\C_+^n$ and the derivatives can be passed through the integral sign, the justification being that the derivatives of the integrand with respect to the $\z$ variables can be estimated locally uniformly by constant multiples of the integrand in (\ref{CR1}).  Thus
\[
(\D_nQ_n g)(\z) = \int_{\C_+^n} \D_n \K_n(\z,\ov\lb) g(\lb)\, dV_n(\lb), \qquad \z\in\C_+^n,
\]
where the derivatives of $\K_n$ are with respect to the $\z$ variables.

We now extend Proposition \ref{jj} to obtain the following reproducing formula for functions on $\C_+^n$.

\begin{prop}\label{j}
Let $g$ be a holomorphic function on $\C_{+}^n$ such that \eqref{CR1} holds.
Then $\mathcal{Q}_ng$ is holomorphic and
\begin{equation}\label{M}
g= \mathcal{D}_n\mathcal{Q}_ng.
\end{equation}
\end{prop}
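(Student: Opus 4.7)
My plan is induction on $n$, with the base case $n=1$ being precisely Proposition~\ref{jj}. The holomorphy of $\mathcal{Q}_n g$ and the identity
\[
(\mathcal{D}_n \mathcal{Q}_n g)(\z) = \int_{\C_+^n} g(\lb) \prod_{j=1}^n \frac{4 \Re\l_j}{\pi (z_j + \ov{\l_j})^3} \, dS_n(\lb)
\]
follow by differentiation under the integral, as already explained in the paragraph preceding the statement; one just computes $\partial_{z_j}K(z_j,\ov{\l_j}) = 4/(\pi(z_j+\ov{\l_j})^3)$. The elementary bound $|\l|^2/|z+\ov\l|^3 \le C(z)$ for $\l\in\C_+$ (uniform for fixed $z\in\C_+$, since this ratio vanishes as $|\l|\to 0$ and as $|\l|\to\infty$) ensures the above integrand is absolutely integrable on $\C_+^n$ in view of \eqref{CR1}, so Fubini is available.

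For the inductive step, I fix $\z\in\C_+^n$ and integrate first in $\l_n$. By Fubini--Tonelli applied to \eqref{CR1}, the one-variable condition
\[
\int_{\C_+} |g(\l_1,\ldots,\l_{n-1},\l_n)| \frac{\Re \l_n}{|\l_n|^2}\, dS_1(\l_n) < \infty
\]
holds for almost every $(\l_1,\ldots,\l_{n-1}) \in \C_+^{n-1}$, and for such tuples Proposition~\ref{jj} applied to the slice $g(\l_1,\ldots,\l_{n-1},\cdot)$ yields
\[
\int_{\C_+} g(\l_1,\ldots,\l_{n-1},\l_n) \frac{4 \Re\l_n}{\pi (z_n + \ov{\l_n})^3}\, dS_1(\l_n) = g(\l_1,\ldots,\l_{n-1},z_n).
\]
This recasts $(\mathcal{D}_n \mathcal{Q}_n g)(\z)$ as $(\mathcal{D}_{n-1}\mathcal{Q}_{n-1}\tilde g)(z_1,\ldots,z_{n-1})$, where $\tilde g(\bm\mu) := g(\bm\mu, z_n)$ is holomorphic on $\C_+^{n-1}$.

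The main obstacle is verifying that $\tilde g$ satisfies the $(n-1)$-variable version of \eqref{CR1}, so that the inductive hypothesis may be invoked; a naive restriction loses control of the weight factor supplied by the $n$-th variable of $g$. I would recover it by exploiting the subharmonicity of $|g|$ in $\l_n$: the sub-mean-value inequality over the disc $D(z_n,\Re z_n/2)\subset\C_+$ gives
\[
|g(\bm\mu, z_n)| \le \frac{4}{\pi(\Re z_n)^2} \int_{D(z_n,\Re z_n/2)} |g(\bm\mu, \l_n)|\, dS_1(\l_n),
\]
and on this disc the weight $\Re\l_n/|\l_n|^2$ is bounded below by a positive constant depending only on $z_n$. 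Inserting this factor, multiplying by $\prod_{j<n}\Re\l_j/|\l_j|^2$, and applying Tonelli bounds
\[
\int_{\C_+^{n-1}} |\tilde g(\bm\mu)| \prod_{j=1}^{n-1} \frac{\Re\l_j}{|\l_j|^2}\, dS_{n-1}(\bm\mu) \le C(z_n) \int_{\C_+^n} |g(\lb)| \prod_{j=1}^{n} \frac{\Re\l_j}{|\l_j|^2}\, dS_n(\lb) < \infty,
\]
so $\tilde g$ satisfies the hypothesis in dimension $n-1$. The inductive hypothesis then gives $(\mathcal{D}_{n-1}\mathcal{Q}_{n-1}\tilde g)(z_1,\ldots,z_{n-1}) = \tilde g(z_1,\ldots,z_{n-1}) = g(\z)$, completing the induction.
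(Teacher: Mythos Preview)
Your proof is correct and follows the same inductive strategy as the paper: peel off one variable via Proposition~\ref{jj} (for a.e.\ value of the remaining dummy variables, using Fubini), then apply the inductive hypothesis to the resulting slice. The only difference is a technical one in verifying \eqref{CR1} for the slice $\tilde g = g(\cdot,z_n)$: the paper is content to know this for almost every $z_n$ (again by Fubini) and then upgrades the resulting a.e.\ identity $g=\D_n Q_n g$ to a pointwise one by holomorphy, whereas your sub-mean-value argument yields \eqref{CR1} for the slice at \emph{every} $z_n$, so the identity holds pointwise without the a.e.\ detour.
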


\begin{proof}
The proof is by induction, and the case $n=1$ is given in Proposition \ref{jj}. 

Let $n\ge1$, and assume that (\ref{M}) holds for $n$ variables.  
We consider now the case of $n+1$ variables $(\z,z_{n+1})$.   
Assume that $g(\z,z_{n+1})$ satisfies \eqref{CR1} for $n+1$ variables.   
For $\z\in\C_+^n$, let $h_{\z}(z_{n+1}) = g(\z,z_{n+1})$.  
By (\ref{CR1}) and Fubini's theorem we obtain that
\[
\int_{\C_{+}}|h_\z(\l)| \frac{\Re{\l}}{|\l|^2}\,dS(\l)<\infty
\]
for almost all $\z\in \C_{+}^n$.
For those $\z$, we can apply Proposition \ref{jj} to the function $h_\z$.    We obtain
\[
g(\z,z_{n+1}) = h_\z(z_{n+1}) = \int_{\C_+} D_{n+1} K(z_{n+1},\l_{n+1}) g(\z,\l_{n+1}) \,dV(\l_{n+1}).
\]
Now let $k_{z_{n+1}}(\z) = g(\z,z_{n+1})$.   By applying Fubini's theorem and the inductive hypothesis to the functions $k_{z_{n+1}}$, we obtain that, for almost all $z_{n+1} \in \C_+$,
\begin{align*}
g(\z,z_{n+1}) &= \int_{\C_+^n} \D_n\K_n(\z,\ov\lb) k_{z_{n+1}}(\lb) \,dV_n(\lb) \\
&= \int_{\C_+^n} \D_n\K_n(\z,\ov\lb) \int_{\C_+} D_{n+1} K(z_{n+1},\l_{n+1}) g(\lb,\l_{n+1}) \,dV(\l_{n+1})\,dV_n(\lb) \\
&= \int_{\C_+^{n+1}} \D_{n+1}\mathcal{K}_{n+1}(\z,z_{n+1},\ov\lb_n,\ov\l_{n+1}) g(\lb,\l_{n+1}) \, dV_{n+1}(\lb,\l_{n+1}) \\
&= (\D_{n+1}Q_{n+1}g )(\z,z_{n+1}).
\end{align*}
This establishes that $g = \D_{n+1}Q_{n+1}g$ almost everywhere.   Since both functions are holomorphic, it follows that they agree everywhere.
\end{proof}

\begin{cor} \label{CorR}
Let $f \in \B^n$.   Then $\D_nf = \D_n\mathcal{Q}_n\D_n f$, and $f_{\rm{el}} = \mathcal{Q}_n\D_n f$, so
 \begin{equation} \label{repf}
f_{\rm{el}}(\z) = \int_{\C_+^n} \mathcal{K}_n(\z,\ov\lb) (\D_nf)(\lb) \, dV_n(\lb), \quad \z\in\C_+^n.
  \end{equation}
 In particular, if $f \in \B^n_0$, then $f = \mathcal{Q}_n \D_nf$.
  \end{cor}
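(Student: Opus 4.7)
The plan is to feed $g := \D_n f$ into Proposition \ref{j} and then match $\mathcal{Q}_n\D_n f$ against $f_{\rm el}$ via the rigidity statement of Proposition \ref{p2.4}(iv).

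First, I would verify that $g = \D_n f$ satisfies the integrability hypothesis \eqref{CR1}. Writing $\l_j = \a_j + i\b_j$, the bound $|(\D_n f)(\aa+i\bb)| \le (H_{I_n}f)(\aa)$ together with the identity $\int_\R \a_j/(\a_j^2+\b_j^2)\, d\b_j = \pi$ gives
\[
\int_{\C_+^n} |\D_n f(\lb)|\prod_{j=1}^n\frac{\Re\l_j}{|\l_j|^2}\,dS_n(\lb) \le \pi^n \int_{\R_+^n}(H_{I_n}f)(\aa)\,d\aa = \pi^n\|f\|_{\B^n_0}<\infty.
\]
Then Proposition \ref{j} yields $\D_n f = \D_n\mathcal{Q}_n \D_n f$, which is the first assertion. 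Setting $h := \mathcal{Q}_n\D_n f$ and using that $f - f_{\rm el}$ is a finite sum of functions of degree less than $n$ (so annihilated by $\D_n$, by Proposition \ref{p2.4}(iii)), we get $\D_n(f_{\rm el}-h)=0$.

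To conclude $f_{\rm el}=h$, I invoke Proposition \ref{p2.4}(iv) applied to $g := f_{\rm el}-h$. The function $f_{\rm el}$ is bounded (it belongs to $\B^n$) and vanishes whenever some $z_k=\infty$, by construction as the elementary component of degree $n$. It remains to show the same for $h$. Here I replace the Cauchy-type kernel $|\K_n(\z,\ov\lb)|$ by its integral over $\bb\in\R^n$, which produces a Poisson-type factor: a direct computation gives
\[
\int_\R \frac{(2/\pi)\a_j}{(\Re z_j+\a_j)^2+(\Im z_j-\b_j)^2}\,d\b_j = \frac{2\a_j}{\Re z_j+\a_j},
\]
and combined with $|\D_n f(\aa+i\bb)|\le (H_{I_n}f)(\aa)$ and Fubini this yields
\[
|h(\z)| \le \int_{\R_+^n} (H_{I_n}f)(\aa)\prod_{j=1}^n\frac{2\a_j}{\Re z_j+\a_j}\,d\aa \le 2^n\|f\|_{\B^n_0}.
\]
So $h\in H^\infty$. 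As $\Re z_k\to\infty$ with the remaining $z_j$ fixed, the $k$-th factor $2\a_k/(\Re z_k+\a_k)$ tends pointwise to $0$ while the whole integrand stays dominated by $2^n(H_{I_n}f)(\aa)\in L^1(\R_+^n)$; dominated convergence then gives $h(\z)\to 0$.

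Thus $g = f_{\rm el}-h\in H^\infty(\C_+^n)$ satisfies $\D_n g = 0$ and vanishes when any $z_k=\infty$, so Proposition \ref{p2.4}(iv) forces $g\equiv 0$. This is precisely $f_{\rm el}=\mathcal{Q}_n\D_n f$, which unpacks to \eqref{repf}, and if $f\in\B^n_0$ then $f_{\rm el}=f$ and the final formula $f=\mathcal{Q}_n\D_n f$ follows at once. The delicate step is the decay of $h$: one has to turn the $(z_j+\ov\l_j)^{-2}$ kernel into a Poisson kernel by a single $\b_j$-integration in order to access the integrability already furnished by $\|f\|_{\B^n_0}<\infty$; a naive pointwise bound on $|\K_n(\z,\ov\lb)|$ does not give integrability in $\lb$.
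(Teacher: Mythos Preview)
Your proof is correct and follows essentially the same route as the paper's: verify \eqref{CR1} for $g=\D_n f$ via the bound $|\D_n f(\aa+i\bb)|\le (H_{I_n}f)(\aa)$, apply Proposition~\ref{j}, then identify $\mathcal{Q}_n\D_n f$ with $f_{\rm el}$ through Proposition~\ref{p2.4}(iv). Your Poisson-kernel estimate for the boundedness and decay of $h$ is more explicit than the paper's bare appeal to dominated convergence, but the logical structure is the same.
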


\begin{proof}
Let $g = \D_n f$.   Then
\begin{align*}
\int_{\C_+^n}|g(\z)| \prod_{j=1}^n \frac{\Re z_j}{|z_j|^2} \, dS_n(\z) 
&\le \int_{\R_+^n} (H_{I_n}f)(\aa) \int_{\R^n} \prod_{j=1}^n \frac{\a_j}{\a_j^2+t_j^2} \,d\bb\,d\aa \\
&= \pi^n \|f\|_{\B^n_{I_n}} < \infty.
\end{align*}
Thus $g$ satisfies \eqref{CR1}, so Proposition \ref{j} shows that $g =  \D_n\mathcal{Q}_n g$.   

By the dominated convergence theorem, $(\mathcal{Q}_n\D_n f)(\z)\to0$ if $\Re z_j \to \infty$ for any one $j$.    Moreover,  $f_{\rm{el}} - \mathcal{Q}_n\D_n f$ is in $H^\infty(\C_+^n)$ and in the kernel of $\D_n$ with zero limits as $\Re z_j\to\infty$, and then it follows that $f_{\rm{el}} = \mathcal{Q}_n \D_n f$  (see Proposition \ref{p2.4}(\ref{p2.4-4})).   
\end{proof}

\begin{exa}  \label{exres}
In this example we establish the $\B _k$-norms and other properties of various resolvent functions.

For $z = \a+i\b \in \C_+$ and $\l\in\C_+$, let $r_z(\l) = (\l+z)^{-1}$.   Then $r_z \in \B^1_0$, and 
\[
\|r_z^k\|_{\B_0^1} = \a^{-k}, \quad \|r_z^k\|_{\B^1} =2 \a^{-k}, \quad k \in \N.
\]
%For $\z\in\C_+^n$, $j\in I_n$ and $\l\in\C_+$, let $r_{\l,j,n}(\z) = (z_j + \l)^{-1}$.
For $\z\in\C_+^n$ and $\lb \in \C_+^n$, let
$
r_\z(\lb) := \prod_{j=1}^n r_{z_j}(\l_j).
$
Then $\r_\z \in \B_0^n$, and 
\begin{align*}
\|\r_\z\|_{\B_0^n}  &= \prod_{j=1}^n \|r_{z_j}\|_{\B_0^1} = \prod_{j=1}^n \a_j^{-1}, \quad 
\|\r_\z^k\|_{\B_0^n} = \prod_{j=1}^n \|r_{z_j}^k\|_{\B_0^1} = \prod_{j=1}^n \a_j^{-k},
\end{align*}
where $\a_j = \Re z_j$ and $k \in\N$.   

For $z, \zeta \in \C_+$,
\[
\|r_z-r_\zeta\|_{\B_0^1} = |z-\zeta| \, \|r_z  r_\zeta\|_{\B_0^1} \le |z-\zeta| \|r_z\|_{\B^1}\|\,\|r_\zeta\|_{\B^1} \le \frac{4|z-\zeta|}{\Re z \Re\zeta} \to 0,
\]
as $z\to\zeta$.  So the map $z \mapsto r_z$ is continuous from $\C_+$ to $\B_0^1$.  

For $j \in I_n$, the map $\z \mapsto r_{z_j}$ is continuous from $\C_+^n$ to $\B_0^1$.   Let $r_{z,j}(\lb)  = r_{z}(\l_j)$.  Then $\z \mapsto r_{z,j}$ is continuous from $\C_+^n$ to $\B^n$.  Since $\r_\z = \prod_{j=1}^n r_{z,j}$, the map $\z \mapsto \r_\z$ is continuous from $\C_+^n$ to $\B_0^n$.  

In fact, the map $\z \mapsto r(\z)$ is holomorphic from $\C_+^n$ to $\B_0^n$, but we do not need this.
\end{exa}

\subsection*{Shifts}

We will use shifts of the variables $z_j$ for the construction of the $\B$-calculus for operators.    In \cite{BGT} and \cite{Kob}, we used shifts not only in the horizontal direction but also in the vertical directions.   In this paper, we define the functional calculus by a different route, and we do not need to use the vertical shifts.   So we present two lemmas here about the horizontal shifts on $\B^n$.

First we will establish a variant of the reproducing formula \eqref{repf} for shifted functions in $\B_0^n$.   This result is a special case of  \cite[Lemma 2.18(2)]{BGT} for $n=1$, and \cite[Proposition 5.4.1]{Kob} for $n=2$.   

\begin{lemma}\label{tflip}
If $f \in \mathcal B^n_0,$ then for every $\t\in \R_+^n$,
\begin{equation} \label{fshift}
f(\z+\t) =\int_{\C^n_{+}} \mathcal{K}_n(\z+{\t},\ov{\lb}) (\D_n f)(\lb)\,dV_n(\lb),\qquad \z\in \C^n_{+}.
\end{equation}
\end{lemma}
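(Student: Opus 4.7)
My plan is to deduce this directly from Corollary \ref{CorR}. Recall that Corollary \ref{CorR} says that, for $f\in\B^n_0$, one has $f = \mathcal{Q}_n \D_n f$ on all of $\C_+^n$, which unpacks as
\[
f(\w) = \int_{\C_+^n} \mathcal{K}_n(\w,\ov\lb) (\D_n f)(\lb) \, dV_n(\lb), \qquad \w\in\C_+^n.
\]
The lemma is then obtained by substituting $\w = \z+\t$ into this identity, so the only thing to check is that this substitution is legitimate.

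The point to verify is that $\z+\t\in\C_+^n$ whenever $\z\in\C_+^n$ and $\t\in\R_+^n$. Since $\R_+ = [0,\infty)$ and $\C_+$ is the open right half-plane, each coordinate satisfies $\Re(z_j+t_j) = \Re z_j + t_j > 0$, so $\z+\t$ lies in the domain of $f$ and of the kernel $\mathcal{K}_n(\cdot,\ov\lb)$. Therefore the reproducing identity for $f$ applies at the point $\w=\z+\t$ and yields exactly the claimed formula \eqref{fshift}.

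I expect no real obstacle: the content of the lemma is that the reproducing formula is applied at a shifted point, not that $f(\cdot+\t)$ is being re-expanded through its own derivative. In particular one should resist the temptation to write $g(\z) := f(\z+\t)$ and apply Corollary \ref{CorR} to $g$, because that would produce $\D_n f$ evaluated at $\lb+\t$ (through $\D_n g$), which is a genuinely different formula. The formulation in \eqref{fshift} keeps $\D_n f$ evaluated at $\lb$ and absorbs the shift into the kernel $\mathcal{K}_n(\z+\t,\ov\lb)$, which is precisely what the direct substitution of $\w=\z+\t$ in the reproducing formula gives.
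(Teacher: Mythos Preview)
Your proof is correct: since $f\in\B^n_0$, Corollary \ref{CorR} gives $f(\w)=\int_{\C_+^n}\K_n(\w,\ov\lb)(\D_nf)(\lb)\,dV_n(\lb)$ for every $\w\in\C_+^n$, and $\z+\t\in\C_+^n$ whenever $\z\in\C_+^n$ and $\t\in\R_+^n$. So the lemma is indeed an immediate substitution.

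The paper, however, argues differently. It first applies Corollary \ref{CorR} to the shifted function $g=T(\t)f$, obtaining an integral with $(\D_nf)(\lb+\t)$ and the unshifted kernel $\K_n(\z,\ov\lb)$, and then uses a Cauchy contour-shift (moving each integration line $t_j+i\R$ back to $i\R$) to transfer the translation from $\D_nf$ to the kernel. Your observation makes this detour unnecessary for the lemma as stated. The reason the paper proceeds this way is that the contour-shift argument is precisely what is needed for Lemma \ref{shift3}, where one must show $f(\A+\t)=(T(\t)f)(\A)$: there the two sides are
\[
\int_{\C_+^n}\langle\K_n(\A+\t,\ov\lb)x,x^*\rangle(\D_nf)(\lb)\,dV_n(\lb)
\quad\text{and}\quad
\int_{\C_+^n}\langle\K_n(\A,\ov\lb)x,x^*\rangle(\D_nf)(\lb+\t)\,dV_n(\lb),
\]
and there is no ``point'' $\A+\t$ at which to evaluate a reproducing formula---one genuinely needs the contour shift. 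So your shortcut settles Lemma \ref{tflip} cleanly, but the paper's longer route is the template that carries over to the operator setting.
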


\begin{proof}
We begin with the case when $n=1$. From Corollary \ref{CorR},
\[
f(z+t)=-\frac{2}{\pi}\int_0^\infty\alpha \int_{\R}\frac{f'(\alpha+t+i\beta)}{(z+\alpha-i\beta)^2}\,d\beta\,d\alpha, \qquad z \in \C_+.
\]
Changing variables in the inner integral, noting that $f'(\a+\l)$ is bounded for $\l\in\C_+$ and a fixed $\a>0$, and using Cauchy's theorem, 
we obtain
\begin{align*}
\int_{\R}\frac{f'(\alpha+t+i\beta)}{(z+\alpha-i\beta)^2}\,d\beta
&=-i\int_{t+i\R}\frac{f'(\alpha+\l)}{(z+\alpha+t-\l)^2}\,d\l\\
&=-i\int_{i\R}\frac{f'(\alpha+\l)}{(z+\alpha+t-\l)^2}\,d\l
=\int_{\R}\frac{f'(\alpha+i\beta)}{(z+\alpha+t-i\beta)^2}\,d\beta.
\end{align*}
So, we obtain the formula \eqref{fshift} for $n=1$:
\[
 f(z+t)=-\frac{2}{\pi}\int_0^\infty\alpha \int_{\R}\frac{f'(\alpha+i\beta)}{(z+t+\alpha-i\beta)^2}\,d\beta\,d\alpha, \qquad z\in \C_{+}, \, t\ge0.
\]

For $n>1$, we may consider the individual variables one at a time, applying the same argument as above to show that
\begin{multline*}
 \hskip40pt \int_{\C^n_{+}} \mathcal{K}_n(\z+{\t^{j-1}},\ov{\lb}) (\D_n f)(\lb+\t_{j})\,dV_n(\lb)  \\
=\int_{\C^n_{+}} \mathcal{K}_n(\z+{\t^{j}},\ov{\lb}) (\D_n f)(\lb+\t_{j+1})\,dV_n(\lb), \hskip40pt
\end{multline*}
where 
\[
\t^{j} = (t_1,\dots,t_{j},0,0,\dots,0), \; \t_{j} = (0,\dots,0,t_{j}, t_{j+1},\dots, t_n), \quad j\in I_n.  \qedhere
\]
\end{proof}

Next we show that horizontal shifts are strongly continuous on $\B^n$.   For $f \in \mathcal B^n$, $\t=(t_1, \dots, t_n) \in \R_+^n$,
and $\z \in \mathbb C_+^n$, let
\begin{equation} \label{Tdef}
(T(\t)f)({\z}):=f(\z+\t).
\end{equation}
It is easy to see that $T(\t)$ is a contraction on $(\B^n,\|\cdot\|_{\B^n})$.
The following lemma was proved in \cite[Lemma 2.6]{BGT} for $n=1$ and in \cite[Proposition 4.2.2]{Kob} for $n=2$.

\begin{lemma}  \label{C0}
The map $T : \R_+^n \to L(\B^n)$ is strongly continuous.
\end{lemma}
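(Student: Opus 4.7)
The plan is to prove strong continuity at $\t = 0$ (along $\R_+^n$) and then extend to arbitrary $\t_0 \in \R_+^n$ by an elementary semigroup reduction. Given $\t_0 \in \R_+^n$ and $\t \to \t_0$ in $\R_+^n$, let $\bm{\sigma} \in \R_+^n$ denote the componentwise maximum of $\t$ and $\t_0$. Then $\bm{\sigma} - \t$ and $\bm{\sigma} - \t_0$ both lie in $\R_+^n$ and tend to $0$ as $\t \to \t_0$. Using the semigroup identity $T(\bm{\sigma}) = T(\t)T(\bm{\sigma}-\t) = T(\t_0)T(\bm{\sigma}-\t_0)$ one obtains
\[
T(\t)f - T(\t_0)f = T(\t)\bigl[f - T(\bm{\sigma}-\t)f\bigr] + T(\t_0)\bigl[T(\bm{\sigma}-\t_0)f - f\bigr],
\]
and contractivity of $T(\cdot)$ on $\B^n$ then yields
\[
\|T(\t)f - T(\t_0)f\|_{\B^n} \le \|T(\bm{\sigma}-\t)f - f\|_{\B^n} + \|T(\bm{\sigma}-\t_0)f - f\|_{\B^n},
\]
so everything reduces to continuity at $\t = 0$.

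Next I would apply the elementary decomposition $f = \sum_{\O \in \P_n} f_{\O,0}$ of Proposition \ref{p2.4}(i). The shift $T(\t)$ preserves each closed subalgebra $\B^n_{\O,0}$, and the $\emptyset$-summand is a constant fixed by every $T(\t)$. For a nonempty $\O$ with $k = |\O|$, the restriction of $\|\cdot\|_{\B^n}$ to $\B^n_{\O,0}$ agrees with the $\B^k$-norm on the $k$-variable function obtained after relabelling, since $\|h\|_{\B^n_\Psi} = 0$ whenever $\Psi \not\subseteq \O$ for $h \in \B^n_{\O,0}$; on $\B^k_0$ the norms $\|\cdot\|_{\B^k}$ and $\|\cdot\|_{\B^k_0}$ are equivalent by Proposition \ref{Pr1}. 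Thus the whole question reduces to showing
\[
\|T(\t)g - g\|_{\B^k_0} \to 0 \qquad \text{as } \t \to 0 \text{ in } \R_+^k,
\]
for each $g \in \B^k_0$ and each $k \in \{1,\dots,n\}$.

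For this core step I would apply dominated convergence. Expanding the seminorm,
\[
\|T(\t)g - g\|_{\B^k_0} = \int_{\R_+^k} \sup_{\Re\z = \aa} \bigl|(\D_k g)(\z+\t) - (\D_k g)(\z)\bigr| \, d\aa.
\]
For each fixed $\aa \in (0,\infty)^k$, Cauchy's inequality together with the maximum principle bounds $|\D_k g|$ on the half-space $\{\Re\z \ge \aa/2\}$ by the finite number $(H_{I_k}g)(\aa/2)$, and a second application of Cauchy gives $|D_j \D_k g(\z)| \le 2(H_{I_k}g)(\aa/2)/\a_j$ on $\{\Re\z \ge \aa\}$. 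A first-order Taylor estimate along the segment from $\z$ to $\z+\t$ produces a Lipschitz-in-$\t$ bound (with constant depending on $\aa$) for the integrand, forcing pointwise convergence to $0$ as $\t \to 0$. For the integrable majorant, monotonicity of $H_{I_k}g$ in each coordinate gives
\[
\sup_{\Re\z = \aa} \bigl|(\D_k g)(\z+\t) - (\D_k g)(\z)\bigr| \le (H_{I_k}g)(\aa+\t) + (H_{I_k}g)(\aa) \le 2(H_{I_k}g)(\aa),
\]
and the right-hand side is integrable on $\R_+^k$ by definition of $\B^k_0$. DCT completes the proof.

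The technical heart of the argument is the $\aa$-slice pointwise convergence: one needs convergence of $(\D_k g)(\aa + \t + i\bb)$ to $(\D_k g)(\aa + i\bb)$ uniformly in $\bb$ for each fixed $\aa > 0$, which is exactly what the two-step Cauchy-inequality estimate on $\D_k g$ and its first derivatives provides. The remaining bookkeeping --- the semigroup reduction and the splitting into elementary components --- is routine.
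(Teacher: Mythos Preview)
Your proof is correct but follows a different route from the paper's. The paper factors $T(\t) = T_1(t_1)\cdots T_n(t_n)$ into single-coordinate shifts and then shows each $T_j$ is strongly continuous at $t=0$ by estimating every seminorm $\|T_1(t)f - f\|_{\B^n_\O}$ separately; this forces a case split according to whether the shifted coordinate lies in $\O$ or not, and in the second case the argument passes through $H_{\O\cup\{1\}}f$ and Fubini. Your approach instead uses the elementary decomposition $f = \sum_\O f_{\O,0}$ together with Proposition~\ref{Pr1} to collapse everything to a single dominated-convergence argument on the $\B^k_0$-seminorm: after decomposing, each summand depends only on the variables in its support, so the awkward ``shift in a variable the seminorm does not see'' case never arises. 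This is cleaner at the endpoint but trades self-containment for dependence on Proposition~\ref{Pr1} and the decomposition machinery of Proposition~\ref{p2.4}; the paper's proof requires neither. Your componentwise-maximum reduction to $\t=0$ is a nice direct substitute for the standard one-parameter semigroup continuity argument implicit in the paper's factorisation. Remark~\ref{3.5}(2) notes that a norm-equivalence shortcut of a related kind was used in \cite{Kob} for $n=2$.
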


\begin{proof}
Let
\[
(T_j(t)f)({\z}):= f(z_1, \dots, z_j+t, \dots, z_n), \qquad j \in I_n,\, t\ge0.
\]
Then
\[
T(\t)= T_1(t_1) \dots T_n(t_n).
\]
It suffices to show that the semigroups $(T_j(t))_{t\ge0}$
 are continuous in the strong operator topology on $L(\B^n)$.   
 Without loss of generality, we will take $j=1$, and we will write $\z=(z_1,\z') \in \C_+^n$.

Let  $f\in\B^n$ and $\Omega\in\mathcal{P}_n$ with $k = |\O|$.   We need to show that 
\begin{equation} \label{C0B}
\lim_{t\to0} \|T_1(t)f - f\|_{\B^n_\O} = 0.
\end{equation}
There are two cases: (1) $1 \in \O$, (2) $1 \notin \O$.

\noindent
{\it Case 1: $1 \in \O$}.
Let $\aa_\O = (\aa_j)_{j\in\O} \in (0,\infty)^k$, and let $\z \in W_{\aa_\O}$.  
By an extended version of Cauchy's inequality,
\begin{equation} \label{Poi}
\left| (D_1\D_\O f)(\z) \right| \le \frac{2^{-(k-1)}\|f\|_\infty} { \a_1 \prod_{j\in\O} \a_j}.
\end{equation}
This estimate is valid for $\Re z_1 > \a_1$, by the maximum principle.    Hence
\begin{align*}
\left| (\D_\O f)(z_1+t,\z') - (\D_\O f)(z_1,\z') \right| \le  \int_0^t \left| (D_1\D_\O f)(z_1+s,\z') \right|\,ds 
&\le \frac{2^{-(k-1)} t \|f\|_\infty} {\a_1 \prod_{j\in\O} \a_j}.
\end{align*}
Taking the supremum over all possible choices of $\z \in W_{\aa_\O}$, this establishes that
\[
H_\O(T_1(t)f-f)(\aa_\O) \le \frac{2^{-(k-1)}t \|f\|_\infty} {\a_1 \prod_{j\in\O} \a_j}  \to 0,   \quad t\to0.
\]
In addition,
\begin{equation}  \label{HO}
H_\O(T_1(t)f-f)(\aa_\O) \le (H_\O T_1(t)f)(\aa_\O) + H_\O f(\aa_\O)  \le 2 H_\O f(\aa_\O),
\end{equation}
by the maximum principle.   Hence, the dominated convergence theorem implies that \eqref{C0B} holds.

\noindent
{\it Case 2: $1 \notin \O$}.  
 Let $\O'=\O \cup \{1\}$, $\aa_\O = (\aa_j)_{j\in\O} \in (0,\infty)^k$, and $\z \in W_{\aa_\O}$.  Then $\z_{\O'}   = (z_1,\z_{\O})$.   The definition of $H_{\O'}$ and its monotonicity show that
 \begin{multline*}
\left| (\D_\O f)(z_1+t,\z') - (D_\O f)(z_1,\z') \right| \le  \int_0^t \left| (\D_{\O'} f)(z_1+s,\z') \right|\,ds \\
\le  \int_0^t  (H_{\O'}f)(\a_1+s,\aa_{\O})\, ds \le  \int_0^t  (H_{\O'}f)(s,\aa_{\O}) \, ds.
\end{multline*}
The final expression depends only on $f$, $t$ and $\aa_\O$, so we may take the supremum over all $\z\in W_{\aa_\O}$ and infer that
\begin{equation} \label{Hc2}
H_\O(T_1(t)f-f)(\aa_\Omega)  \le \int_0^t  (H_{\O'}f)(s,\aa_{\O}) \, ds.
\end{equation}
Since $\|f\|_{\B^n_{\O'}}$ is finite, $H_{\O'}f$ 
is integrable over $\R_+^{k+1}$.   Fubini's theorem implies that 
\[\a_1 \mapsto \int_{\R_+^k} (H_{\O'}f) (\a_1,\aa_\O) \,d\aa_\O  \]
is integrable over $\R_+$, and then \eqref{Hc2} and Fubini's theorem imply that
\begin{align*}
\|T_1(t)f - f\|_{\B^n_\O} &=  \int_{\R_+^k} H_\O(T_1(t)f-f)(\aa_\Omega)  \,d\aa_\O \\
&\le \int_0^t \int_{\R_+^k} (H_{\O'}f) (\a_1,\aa_\O) \,d\aa_\O \,d\a_1 
\to 0, \qquad t\to 0.  \qedhere
\end{align*}
\end{proof}

\begin{rems}  \label{3.5}

\noindent
1.  Case 2 of the proof above can be applied when $\O$ is empty, to show that the left shifts are strongly continuous in the $H^\infty$-norm, and hence that $f$ is uniformly continuous, as shown in Proposition \ref{3.2.4}.  

\noindent
2.
For $n=2$, the proof above differs from \cite[Proposition 4.2.2]{Kob}, where the proof relies on switching to an equivalent norm on $\B^2$.   That approach can also be applied for $n>2$.   It simplifies the proof of case 2 above, provided one accepts the equivalence of norms.

\noindent
3.
It was shown in \cite[Proposition 4.6.2]{BGT} that the shift semigroup on $\B^1$ extends to a bounded holomorphic semigroup of angle $\pi/2$.
\end{rems}

\subsection*{Sums of variables}

In this section, we consider how very simple changes of  the variables $z_j$ preserve $\B^n$.   As noted in \cite[Lemma 2.6(5)]{BGT} a trivial scaling of variables shows that if $f\in\B^1$, $b >0$  and $g(z) = f(bz)$,  then $g \in \B^1$, and $\|g\|_{\B^1} = \|f\|_{\B^1}$, and the same applies for functions in $\B^n$ with scaling of some or all variables.

In addition, the family of algebras $\B^n$ is preserved by summing some or all of the variables $z_j$.   In order to avoid heavy formulas, we present this only in the case where all the variables are summed.   For $n=2$, the result was proved in \cite[Lemma 3.8.1]{Kob}.  

\begin{prop} \label{sum}
Let $f \in \B^1$, $n\in\N$, and define $g$ on $\C_+^n$ by
\begin{equation} \label{sumb}
g(\z) = f(z_1 + \dots + z_n), \qquad  \z\in\C_+^n.
\end{equation}
Then $g \in \B^n$, and the map $f \mapsto g$ from $\B^1$ to $\B^n$ is bounded.
\end{prop}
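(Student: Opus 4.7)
The crucial observation is that $g$ depends on the variables $\z$ only through their sum: for any $\O\in\P_n$ with $|\O|=k\ge 1$,
\[
(\D_\O g)(\z) = f^{(k)}(z_1+\dots+z_n),
\]
so this partial derivative depends only on $k$ and on the scalar $z_1+\dots+z_n$. Using this I will first show that
\[
(H_\O g)(\aa_\O) = h_k\!\left({\textstyle\sum_{j\in\O}\a_j}\right), \qquad h_k(s) := \sup_{\Re w = s}\bigl|f^{(k)}(w)\bigr|.
\]
Indeed, for $\z\in W_{\aa_\O}$ the unconstrained variables $z_j$ with $j\in\O^c$ let $\Re(z_1+\dots+z_n)$ take every value in $(\sum_{j\in\O}\a_j,\infty)$, and the maximum principle applied to $f^{(k)}$ (which is bounded on each proper subhalf-plane of $\C_+$ by Cauchy's inequality applied to the bounded function $f$) reduces the supremum to the boundary value $h_k(\sum_{j\in\O}\a_j)$.

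I would then compute $\|g\|_{\B^n_\O}$ via a simplex change of variables. Since the $k$-dimensional Lebesgue measure of $\{\aa\in(0,\infty)^k:\a_1+\dots+\a_k\le s\}$ equals $s^k/k!$,
\[
\|g\|_{\B^n_\O} = \int_{(0,\infty)^k} h_k\!\left({\textstyle\sum_{j\in\O}\a_j}\right)\,d\aa_\O = \int_0^\infty h_k(s)\,\frac{s^{k-1}}{(k-1)!}\,ds.
\]

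The central step is a Cauchy-type estimate bounding $h_k$ in terms of $h_1$. Applying Cauchy's integral formula for the $(k-1)$-st derivative of $f'$ on the circle of radius $\Re z /2$ centered at $z$ (which lies in $\C_+$) together with the monotonicity of $h_1$ yields
\[
|f^{(k)}(z)| \le \frac{(k-1)!\,2^{k-1}}{(\Re z)^{k-1}}\,h_1(\Re z /2), \qquad z\in\C_+,\ k\ge 1.
\]
Substituting this into the previous display and changing variable $u=s/2$ gives
\[
\|g\|_{\B^n_\O} \le 2^{k-1}\int_0^\infty h_1(s/2)\,ds = 2^k\,\|f\|_{\B^1_{\{1\}}}.
\]

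Finally, $\|g\|_{\B^n_\emptyset} = \|g\|_\infty = \|f\|_\infty$, and grouping all $\O\in\P_n$ by cardinality yields
\[
\|g\|_{\B^n} \le \|f\|_\infty + \sum_{k=1}^n \binom{n}{k}\,2^k\,\|f\|_{\B^1_{\{1\}}} = \|f\|_\infty + (3^n-1)\,\|f\|_{\B^1_{\{1\}}} \le 3^n\,\|f\|_{\B^1},
\]
establishing the claim (holomorphy of $g$ is automatic since the sum map $\C_+^n\to\C_+$ is holomorphic). The main, albeit minor, obstacle is the Cauchy-formula estimate for $f^{(k)}$ in terms of $f'$; everything else is a symmetry argument and a simplex integral.
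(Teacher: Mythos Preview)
Your argument is correct. The core analytic ingredients are exactly those of the paper's proof: the identity $\D_\O g = f^{(|\O|)}\circ(\text{sum})$, the simplex change of variables turning $\int_{\R_+^k} h_k(\sum\a_j)\,d\aa_\O$ into $\int_0^\infty h_k(s)\,s^{k-1}/(k-1)!\,ds$, and a Cauchy estimate controlling $f^{(k)}$ by $f'$ on a shifted line.

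The organizational route differs. The paper first reduces to $f\in\B^1_0$ (subtracting $f(\infty)$) and then invokes Proposition~\ref{Pr1}, which says $\|g\|_{\B^n}\le 2^n\|g\|_{\B^n_0}$ once one checks $g(\z)\to0$ as $\Re z_j\to\infty$; this leaves only the single seminorm $\|g\|_{\B^n_{I_n}}$ to compute. You instead compute every $\|g\|_{\B^n_\O}$ directly, observing that it depends only on $|\O|$. Your route avoids appealing to Proposition~\ref{Pr1} and yields the clean explicit bound $\|g\|_{\B^n}\le 3^n\|f\|_{\B^1}$; the paper's route is shorter once that proposition is in hand but gives a constant involving the auxiliary numbers $C_{n-1}$ from \cite{BGT}. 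Both are equally valid.
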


\begin{proof}
We may assume that $f\in\B^1_0$.   Clearly $g$ is holomorphic on $\C_+^n$ and $g(\z)\to0$ as $\Re z_j\to\infty$ for some $j \in I_n$.
By Proposition \ref{Pr1}, it suffices to show that $\|g\|_{\B^n_0} < \infty$.      We have
\begin{equation*}
\|g\|_{\B^n_0} = \int_{\R^n_+} \sup_{\b\in\R} \left|  f^{(n)}(\a_1+\dots+\a_n + i\b)\right| \, d\aa.
\end{equation*}
The change of variables
\[
t_k  = \sum_{j=1}^k \a_j, \qquad k \in I_n,
\]
establishes that
\begin{align*} 
\|g\|_{\B^n_0} &= \int_{S_n}  \sup_{\b\in\R} \left|  f^{(n)}(t_n+i\b)\right| \, d\t,
\end{align*}
where $S_n = \{ (t_k)_{k\in I_n} \in \R_+^n : 0 \le t_1 \le t_2 \le \dots \le t_n\}$.   Integrating with respect to $t_1, \dots, t_{n-1}$ and replacing $t_n$ by $t$ shows that
\[
\|g\|_{\B^n_0} = \int_0^\infty \frac{t^{n-1}}{(n-1)!} \sup_{\b\in\R}  \left|f^{(n)}(t+i\b)\right| \, dt.
\]
A higher-order version of Cauchy's inequality gives
\[
t^{n-1} \sup_{y\in\R}|f^{(n)}(t+i\b)| \le 2^{n-1} C_{n-1} \sup_{\b\in\R} |f'({t/2+i\b})|,
\]
where $C_{n-1}$ is as in \cite[Lemma 2.1(4)]{BGT}.   Hence
\[
\|g\|_{\B_0^n} \le \frac{2^{n-1} C_{n-1}}{(n-1)!} \int_0^\infty \sup_{\b\in\R} |f'(t/2+i\b)| \,dt = \frac{2^{n}C_{n-1}}{(n-1)!}  \|f\|_{\B_0^1}.
\]
This establishes that $g \in \B_0^n$, and also that the map $f \mapsto g$ is bounded from $\B^1$ to $\B^n$.
\end{proof}

It is clear that if $g$ as defined in \eqref{sumb} is in $\B^n$, then $f \in \B^1$, since $f(z) = g(z,0,0,\dots,0)$.

\section{The $\B^n$-calculus: set-up} \label{sect4}

In the remainder of this paper, $\A := (A_1,\dots,A_n)$ will be an $n$-tuple of commuting operators on a Banach space $X$.   The operators $A_j$ commute in the sense that  the resolvent set of each operator 
 $A_j$ is non-empty and their resolvents commute with each other.    Moreover, we will always assume that the spectrum of each operator satisfies
\begin{equation} \label{spr}
\sigma(A_j) \subseteq \ov\C_+, \quad j \in I_n.
\end{equation}

Assuming that \eqref{spr} holds, let $\O\in\P_n$.   We will say that $\A$ satisfies the {\it {\rm (GSF$_\O$)} condition} if the following holds for all $x\in X$, and $x^* \in X^*$:
\[
\sup_{\a_j>0, j\in\Omega} \int_{\R^{|\O|}} \big|\langle \prod_{j\in\O} \a_j (A_j + \a_j-i\b_j)^{-2} x,x^* \rangle \big| \, d\bb < \infty.  \tag{GSF$_{\O}$}
\] 
By the Closed Graph theorem, if (GSF$_\O$) holds, there is a constant $\gamma_{\A_\O}$ such that 
\begin{equation}  \label{GSF2}
\int_{\R^{|\O|}} \big| \langle \prod_{j\in\O} \a_j (A_j + \a_j-i\b_j)^{-2} x,x^* \rangle \big| \, d\bb \le \gamma_{\A_\O} \|x\|\,\|x^*\|
\end{equation}
for all $x \in X$ and $x^* \in X^*$. 
If $\Omega$ is empty, then the condition is deemed to be satisfied.  
For $\O=\{j\}$, the condition says that $A_j$ satisfies the (GSF) condition for the single operator $A_j$ as in \cite[Section 4.1]{BGT}.    
 If $\O = I_n$, we will say that $\A$ satisfies the (GSF$_n$) condition and we will write $\gamma_\A$ instead of $\gamma_{I_n}$. 

We will say that $\A$ satisfies the {\it full {\rm(GSF)} condition} if $\A$ satisfies (GSF$_\O$) for all $\O \in \P_n$.    For $n=2$, the full (GSF) condition agrees with \cite[Section 6.1]{Kob}.

\begin{defn} \label{deffc}
A {\it $\B^n$-calculus} for $\A$ is a bounded algebra homomorphism $\Phi : \B^n \to L(X)$ such that $\Phi(r_{\l,j}) = (A_j+\l)^{-1}$ for all $j\in I_n$ and $\l \in\C_+$. 
\end{defn} 

There is a similar definition of a $\B_0^n$-caclulus.   In this and the next section, we will show that the full (GSF) condition is a sufficient condition for $\A$ to have a $\B^n$-calculus, but we first show that the full (GSF) condition is necessary.  This was proved in \cite[Theorem 6.1]{BGT2} for $n=1$ and in \cite[Theorem 6.3.1]{Kob} for $n=2$.  The proof below is a variant of those proofs.

We will write
$
\r_\z(\A) := \prod_{j=1}^n (A_j+z_j)^{-1}.
$

\begin{prop} \label{nec}
Let $\A$ be an $n$-tuple of commuting operators on a Banach space $X$ satisfying \eqref{spr}, and assume that there is a $\B_0^n$-calculus $\Phi$ for $\A$.   Then $\A$ satisfies the {\rm (GSF$_n$)} condition.
\end{prop}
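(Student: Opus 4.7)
The plan is to test the homomorphism $\Phi$ on a carefully chosen family in $\B^n_0$ and extract the integrability claimed by (GSF$_n$) via scalar duality. Fix $\aa\in\R_+^n$, $x\in X$, $x^*\in X^*$ and set
\[
F_\aa(\bb):=\Big\langle\prod_{j=1}^n\a_j(A_j+\a_j-i\b_j)^{-2}x,x^*\Big\rangle,\qquad \bb\in\R^n,
\]
which is continuous and bounded. For each $R>0$, standard scalar duality gives
\[
\int_{|\bb|\le R}|F_\aa(\bb)|\,d\bb = \sup\Big\{\Big|\int_{\R^n}h(\bb)F_\aa(\bb)\,d\bb\Big| : h\in C_c(\R^n),\;\|h\|_\infty\le 1,\;\operatorname{supp} h\subseteq\{|\bb|\le R\}\Big\}.
\]
It thus suffices to bound $\int h F_\aa\,d\bb$ uniformly in $\aa$, $R$, and such $h$, and then let $R\to\infty$.

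For such $h$, introduce the test function
\[
g_h(\z):=\int_{\R^n}h(\bb)\prod_{j=1}^n\frac{\a_j}{(z_j+\a_j-i\b_j)^2}\,d\bb,\qquad \z\in\C_+^n.
\]
Differentiating under the integral and using the elementary identity $\int_\R((s+\a)^2+u^2)^{-3/2}\,du=2/(s+\a)^2$ yields
\[
|\D_n g_h(\z)|\le\prod_{j=1}^n\frac{4\a_j}{(\Re z_j+\a_j)^2},
\]
and integration over $\R_+^n$ gives $\|g_h\|_{\B^n_0}\le 4^n$. Analogous (simpler) estimates show $g_h\in H^\infty(\C_+^n)$, and dominated convergence gives $g_h(\z)\to 0$ as $\Re z_j\to\infty$ with the remaining coordinates fixed in $\C_+$. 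Proposition \ref{Pr1} therefore places $g_h$ in $\B^n_0$ with $\|g_h\|_{\B^n}\le 8^n$, uniformly in $\aa$ and $h$.

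To compute $\Phi(g_h)$, recognize $g_h$ as a Bochner integral in $\B^n_0$. For $\bb\in\R^n$ set $\lb(\bb):=(\a_j-i\b_j)_{j=1}^n\in\C_+^n$ and $k_\bb:=\big(\prod_j\a_j\big)\,\r_{\lb(\bb)}^2$; by Example \ref{exres}, $k_\bb\in\B^n_0$ with $\|k_\bb\|_{\B^n_0}=\prod_j\a_j^{-1}$, constant in $\bb$, and the continuity of $\zeta\mapsto r_\zeta$ from $\C_+$ to $\B^1_0$ together with joint continuity of multiplication in $\B^n_0$ makes $\bb\mapsto k_\bb$ continuous from $\R^n$ into $\B^n_0$. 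Since $h$ has compact support, the Bochner integral $\int_{\R^n}h(\bb)k_\bb\,d\bb$ exists in $\B^n_0$ and, when evaluated at any $\z\in\C_+^n$, agrees with $g_h(\z)$, so it equals $g_h$. Applying $\Phi$ under the Bochner integral and using the calculus normalization $\Phi(\r_{\lb(\bb)})=\prod_j(A_j+\a_j-i\b_j)^{-1}$ together with the multiplicativity of $\Phi$, so that $\Phi(\r_{\lb(\bb)}^2)=\prod_j(A_j+\a_j-i\b_j)^{-2}$, we obtain
\[
\langle\Phi(g_h)x,x^*\rangle=\int_{\R^n}h(\bb)F_\aa(\bb)\,d\bb.
\]
Combining this with $|\langle\Phi(g_h)x,x^*\rangle|\le 8^n\|\Phi\|\|x\|\|x^*\|$ and the duality formula above yields $\int_{\R^n}|F_\aa(\bb)|\,d\bb\le 8^n\|\Phi\|\|x\|\|x^*\|$ uniformly in $\aa$, which is (GSF$_n$).

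The main delicate point is the interchange of $\Phi$ with the integral defining $g_h$; the compact support of $h$ and the continuity of the resolvent map in $\B^n_0$ (from Example \ref{exres}) together promote the defining scalar integral to a Bochner integral in $\B^n_0$, across which the bounded linear map $\Phi$ passes. The remaining steps reduce to a standard Fubini-style computation using the elementary integral $\int_\R(A^2+u^2)^{-3/2}\,du=2/A^2$ and one application of Proposition \ref{Pr1}.
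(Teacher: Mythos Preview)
Your proof is correct and follows essentially the same route as the paper: form a Bochner integral of squared resolvents against a compactly supported continuous test function, pass the bounded homomorphism $\Phi$ through the integral, and read off the (GSF$_n$) bound by duality against $C_c(\R^n)$. Your explicit computation of $\|g_h\|_{\B_0^n}\le 4^n$ via the elementary integral $\int_\R(A^2+u^2)^{-3/2}\,du=2/A^2$ in fact supplies a step the paper leaves implicit, and the detour through Proposition~\ref{Pr1} to get $\|g_h\|_{\B^n}\le 8^n$ is harmless (and unnecessary if one takes the $\B_0^n$-calculus to be bounded for the $\|\cdot\|_{\B_0^n}$-norm).
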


\begin{proof}
Let $\varphi : \R^n \to \C$ be a continuous function with compact support, and $\aa\in(0,\infty)^n$.   As shown in Example \ref{exres}, the map $\bb \mapsto \r^2_{\aa-i\bb}$ is continuous from $\R^n$ to $\B_0^n$ and $\|\r^2_{\aa-i\bb}\|_{\B_0^n} \le 4^n \prod_{j=1}^n \a_j^{-1}$.   Hence
\[
G_{\aa,\varphi} := \int_{\R^n} \varphi(\bb)  \r_{\aa-i\bb}^2 \, d\bb,
\]
exists as a Bochner integral with values in $\B_0^n$.   Since $\Phi$ is a bounded linear operator,
\[
\Phi(G_{\aa,\varphi}) =\int_{\R^n} \varphi(\bb)  \r_{\aa-i\bb}(\A)^2 \, d\bb
\]
and hence, for unit vectors $x \in X$ and $x^* \in X^*$, 
\[
\prod_{j=1}^n \a_j \left| \int_{\R^n} \varphi(\bb)  \langle \r_{\aa-i\bb}(\A)^2 x, x^* \rangle   \, d\bb \right|
\le 4^n \|\varphi\|_\infty \|\Phi\|.
\]
Since this holds for all continuous functions $\varphi$ with compact support, it follows that (GSF$_n$) holds.
\end{proof}

\begin{cor}  \label{Bne}
Let $\A$ be an $n$-tuple of comuuting operators on a Banach space $X$ satisfying \eqref{spr}, and assume that there is a bounded algebra homomorphism $\Phi : \B^n\to L(X)$ such that $\Phi(\r_\z) = \r_\z(\A)$ for all $\z \in \C_+^n$.   Then $\A$ satisfies the full {\rm(GSF)} condition.
\end{cor}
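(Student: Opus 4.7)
The plan is to verify $(\mathrm{GSF}_\Omega)$ for each $\Omega\in\P_n$ by constructing, from $\Phi$, a $\B_0^{|\Omega|}$-calculus for the sub-tuple $\A_\Omega := (A_j)_{j\in\Omega}$ and invoking Proposition \ref{nec} applied to $\A_\Omega$; since $(\mathrm{GSF}_{|\Omega|})$ for $\A_\Omega$ coincides with $(\mathrm{GSF}_\Omega)$ for $\A$, this closes the loop. The case $\Omega = \emptyset$ is trivial, and for $\Omega = I_n$ the restriction of $\Phi$ to $\B_0^n \subset \B^n$ is already a $\B_0^n$-calculus for $\A$, so Proposition \ref{nec} applies directly.

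For $\emptyset \neq \Omega \subsetneq I_n$ with $k := |\Omega|$, define the lift $L\colon \B^k \to \B^n$ by $(Lg)(\z) := g(\z_\Omega)$.  A direct seminorm calculation gives $\|Lg\|_{\B^n_\Psi} = \|g\|_{\B^k_\Psi}$ when $\Psi \subseteq \Omega$ and $\|Lg\|_{\B^n_\Psi} = 0$ otherwise, so $L$ is an isometric algebra embedding which carries $\B_0^k$ into $\B^n_{\Omega,0}$.  Hence $\Phi_\Omega := \Phi \circ L\colon \B_0^k \to L(X)$ is a bounded algebra homomorphism.  The crux is the identity $\Phi_\Omega(\r_{\lb_\Omega}) = \prod_{j\in\Omega}(A_j + \l_j)^{-1}$ for every $\lb_\Omega \in \C_+^k$.

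To prove this, fix $\lb_\Omega$ and pick $\mu \in \C_+^n$ with $\mu_j \ne \l_j$ for each $j \in \Omega$.  Applying the scalar resolvent identity $(z_j+\l_j)^{-1}(z_j+\mu_j)^{-1} = (\mu_j-\l_j)^{-1}[(z_j+\l_j)^{-1}-(z_j+\mu_j)^{-1}]$ in each coordinate $j \in \Omega$ and combining with $\prod_{k\in\Omega^c}(z_k+\mu_k)^{-1}$ gives the $\B^n$-identity
\[
(L\r_{\lb_\Omega})\cdot\r_\mu \;=\; \prod_{j\in\Omega}(\mu_j-\l_j)^{-1}\sum_{S\subseteq\Omega}(-1)^{|\Omega\setminus S|}\,\r_{\tau_S},
\]
where $\tau_S\in\C_+^n$ has $j$-th entry $\l_j$ for $j\in S$ and $\mu_j$ otherwise.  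Applying $\Phi$, using the hypothesis $\Phi(\r_{\tau_S})=\r_{\tau_S}(\A)$, and then invoking the operator resolvent identity $(A_j+\l_j)^{-1}-(A_j+\mu_j)^{-1}=(\mu_j-\l_j)(A_j+\l_j)^{-1}(A_j+\mu_j)^{-1}$ collapses the signed sum on the right to $\r_\mu(\A)\cdot \prod_{j\in\Omega}(A_j+\l_j)^{-1}$.  Setting $S_{\lb_\Omega} := \Phi(L\r_{\lb_\Omega}) - \prod_{j\in\Omega}(A_j+\l_j)^{-1}$, one thus obtains $S_{\lb_\Omega}\,\r_\mu(\A) = 0$ for all such $\mu$, and in fact for all $\mu \in \C_+^n$ by continuity.

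Since each $(A_j+\mu_j)^{-1}$ is bounded with dense range $D(A_j)$, its adjoint is injective; hence $[\r_\mu(\A)]^* = \prod_j [(A_j+\mu_j)^{-1}]^*$ is injective, so $\mathrm{ran}\,\r_\mu(\A)$ is dense in $X$.  The bounded operator $S_{\lb_\Omega}$ must therefore vanish, establishing the crux.  With $\Phi_\Omega$ confirmed to be a $\B_0^k$-calculus for $\A_\Omega$, Proposition \ref{nec} applied to $\A_\Omega$ yields $(\mathrm{GSF}_\Omega)$ for $\A$.  The main bookkeeping is the collapsing of the signed sum over $S\subseteq\Omega$, which is routine but needs care; the density obtained via injective adjoints is the crucial structural input.
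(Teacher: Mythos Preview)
Your proof is correct and follows the same approach as the paper: restrict $\Phi$ to $\B^n_{\O,0}\cong\B_0^k$ and invoke Proposition \ref{nec} for $\A_\O$. The paper's one-line proof leaves implicit the verification that $\Phi(L\r_{\lb_\O})=\prod_{j\in\O}(A_j+\l_j)^{-1}$, which is genuinely needed since the hypothesis only gives $\Phi$ on full products $\r_\z$; your resolvent-identity expansion and the density of $\ran\r_\mu(\A)$ fill this in cleanly.
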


\begin{proof}
For $\O\in\P_n$, we may deduce that (GSF$_\O$) holds, by applying Proposition \ref{nec} to $\B^\O_0$ and the restriction of $\Phi$ to that algebra.
\end{proof}

Now we give two classes of operators where $\A$ satisfies the full (GSF) condition for all values of $n$. 
 Proofs were given in \cite{BGT} for $n=1$ and in \cite{Kob} for $n=2$, and the general proofs are minor variants of those cases.
 
 \begin{exas}  \label{ex4.1}
1.   Let $-A_j$ be the generator of a bounded $C_0$-semigroup $(e^{-tA_j})_{t\ge0}$ on a Hilbert space $X$, for each $j\in I_n$, and let $K_j = \sup_{t\ge0} \|e^{-tA_j}\|$.   Assume that the semigroups commute with each other (or equivalently, the resolvents of  $A_j$ commute with each other).   Then $\A$ satisfies the full (GSF) condition, with $\gamma_\A = 2^n \prod_{j=1}^n K_j^2$.   This follows from applications of Plancherel's theorem and the Cauchy-Schwarz inequality.   See the proofs for $n=1$ in \cite[Examples 4.1]{BGT} and for $n=2$ in \cite[Example 6.1.1]{Kob}.  

\noindent 2.   Let $A_j \, (j\in I_n)$ be sectorial operators of angle less than $\pi/2$ on a Banach space $X$, and assume that the resolvents commute with each other (equivalently, $-A_j$ is the generator of a bounded holomorphic $C_0$-semigroup $(e^{-tA_j})_{t\ge0}$ on a Banach space $X$, for $j\in I_n$, and the semigroups commute with each other).  Let $M_j = \sup_{z\in\C_+} \|z(A_j+z)^{-1}\|$.  Then $\A$ satisfies the full (GSF) condition, with $\gamma_\A =2^n \prod_{j=1}^n M_j^2$.   This follows from standard resolvent estimates for sectorial operators.  See the proofs for $n=1$ in \cite[Section 4.2]{BGT} and for $n=2$ in \cite[Example 6.1.2]{Kob}.   
\end{exas}

Now, assume that $\A$ satisfies the (GSF$_n$) condition, and let $f \in \B_0^n$.   Recall from Corollary \ref{CorR} that  the following reproducing formula holds:
\begin{equation}\label{A0}
f(\z)
=\int_{\C_{+}^n} \mathcal{K}_n(\z,\ov{\lb})(\D_n f)(\lb)\,dV_n(\lb),       \qquad \z \in \C_+^n.
\end{equation}
We define $f(\A)$ by the corresponding formula:
\begin{equation}\label{OperA1}
\langle f(\A)x,x^* \rangle:=\int_{\C_{+}^n} \langle \mathcal{K}_n(\A,\ov{\lb})x,x^* \rangle \,(\D_n f)(\lb)\, dV_n(\lb),
\end{equation}
where
\begin{equation} \label{defK}
\K_n(\A,\ov{\lb}) :=  \left(- \frac{2}{\pi} \right)^n \r_{\ov\lb}(\A)^2 =     \left(- \frac{2}{\pi} \right)^n \prod_{j=1}^n (A_j+\ov\l_j)^{-2}, \qquad \lb \in \C_+^n.
\end{equation} 
By the (GSF$_n$) condition, the integral is absolutely convergent, and the formula \eqref{OperA1} defines $f(\A)$ as a map in $L(X,X^{**})$, with norm at most $\gamma_\A \|f\|_{\B^n_0}$.   In order to obtain a $\B^n$-calculus for $\A$, we need to extend the definition of $f(\A)$ to functions $f \in \B^n$, show that $f(\A)$ maps $X$ into $X$ for all $f\in \B^n$, and show that the map $f \mapsto f(\A)$ is an algebra homomorphism from $\B^n$ to $L(X)$. 

If the operators $A_j$ are the negative generators of $n$ commuting $C_0$-semigroups, and 
$f =\LT\mu$ for some bounded measure $\mu$ on $\R_+^n$, then $f(\A)$ as defined above agrees with the Hille-Phillips calculus for $\LM^n$.   This was shown in \cite[Lemma 4.2]{BGT} for $n=1$, and in  \cite[Lemma 6.1.4]{Kob} for $n=2$.   The proof for $n\ge3$ is similar, using Example  \ref{ex2.7} and higher-order integrals.  A consequence of this is that the definition of $\K_n(\A,\ov\lb)$ in \eqref{defK} agrees with the definition obtained by putting $f(\z) = \K_n(\z,\ov\lb)$ in \eqref{OperA1}, for a fixed $\lb$.

Now we assume that $\A$ satisfies the full (GSF) condition. 
 If $f$ is an elementary function in $\B^n$, with order $k$ and support $\O$,  we can define $f(\A)$ by applying the $\B_0^k$-calculus for $\A_\O := (A_j)_{j\in\O}$  to the function $f_\O \in \B_0^k$, so
\begin{align}\label{def2}
\langle f(\mathcal{A})x,x^* \rangle &:=\int_{\C_{+}^k} \langle \mathcal{K}_k (\mathcal{A}_{\O},\ov{\lb})x,x^*\rangle\, 
(\mathcal D_k f_\O)(\lb)\,dV_k(\lb)
\end{align}
for all $x \in X$ and $x^* \in X^*$.   In particular, if $\O=\{j\}$ for some $j\in I_n$, $g\in\B^1$, and we define $f(\z) = g(z_j)$, then we recover the $\B^1$-calculus for the operator $A_j$.   Thus this definition establishes that $f(\A) = (A_j+\l)^{-1}$ if $f(\z) = r_\l(z_j)$.

Now let $f$ be an arbitrary function in $\B^n$.  To define $f(\A)$ we use the elementary decomposition of $f$ as in Proposition \ref{p2.4}(i).   
We define
\begin{equation} \label{deffA}
f(\A) = \sum_{\O\in\P_n} f_{\O,0}(\A_\O),  
\end{equation}
Then we define a bounded linear map from $\Phi_\A:\B^n \to L(X,X^{**})$ by
\begin{equation} \label{defPA}
\Phi_\A(f) = f(\A).
\end{equation}
Then $\|\Phi_\A\| \le \gamma_\A := \max_{\O\in\P_n} \gamma_{\A_\O}$, where $\gamma_{\A_\O}$ is as in \eqref{GSF2}.    The norm of the restriction of $\Phi_\A$ to $\B_0^n$ is the minimal value of $\gamma_{\A_{n}}$, and we will denote it by $\gamma_{\A,0}$.
 
In the next section, we will prove that $\Phi_\A$ is an algebra homomorphism from $\B^n$ to $L(X)$.   
It will suffice to show that $f(\A) \in L(X)$ and $(f g)(\A) = f(\A)g(\A)$ for all elementary functions $f$ and $g$ in $\B^n$.    
These properties were proved in \cite{BGT} for $n=1$ and in \cite{Kob} for $n=2$, by a rather complicated method, and the method used in Section \ref{sect5} is simpler, and illuminating even in the case $n=1$.    

Before moving on to the main proof, we present two lemmas.   The first one may help when considering \eqref{deffA}. 

 \begin{lemma}
Assume that $\A = (A_1,\dots,A_n)$ satisfies the full {\rm(GSF)} condition, and let $\A_{n-1} =(A_1,\dots,A_{n-1})$,  $x\in X$ and $x^* \in X^*$.   
\begin{enumerate} [\rm1.]
\item For $f \in \B^n$ and $\O\in\P_n$ with $|\O|=k$, 
\begin{equation}\label{OperA2}
\langle f_{\O,0}(\A)x,x^* \rangle = \int_{\C_{+}^k} \langle \mathcal{K}_k(\A_\O,\ov{\lb})x,x^* \rangle \,(\D_\O f_\O)(\lb)\, dV_k(\lb).
\end{equation}
\item Let $f\in\B^n_0$.   Then $ (D_n f)(\cdot,\lambda_n)\in \mathcal{B}^{n-1}_0$ for all $\lambda_n\in \C_{+}$, and
\[
\langle f(\A)x, x^* \rangle =\int_{\C_{+}} \langle K(A_n,\ov{\lambda}_n)(D_n f)(\A_{n-1},\lambda_n)
x,x^{*}\rangle \, dV(\lambda_n).
\]
\end{enumerate}
\end{lemma}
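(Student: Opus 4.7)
For part (1), my plan is to unwind definitions. The function $f_{\O,0}$ is elementary with support in $\O$, so it depends only on the variables $z_j$ ($j\in\O$); viewed as an element of $\B^{|\O|}_0$, its ``$f_\O$'' in the notation of \eqref{def2} is $f_{\O,0}$ itself. Thus \eqref{def2} applied to $f_{\O,0}$ already reads as the desired formula, except with $\D_k f_{\O,0}$ in place of $\D_\O f_\O$. The identification $\D_\O f_{\O,0}=\D_\O f_\O$ follows from the explicit alternating sum of Proposition \ref{p2.4}(i): each $f_\Psi$ with $\Psi\subsetneq\O$ is independent of $z_j$ for some $j\in\O\setminus\Psi$, so $\D_\O f_\Psi=0$; only the $\Psi=\O$ term survives.

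For part (2), my first step is to verify $g:=(D_n f)(\cdot,\l_n)\in\B^{n-1}_0$ for each $\l_n\in\C_+$. Cauchy's inequality in the $n$-th variable shows $\|g\|_\infty\le\|f\|_\infty/(2\Re\l_n)$. The vanishing condition \eqref{Hlim} is obtained by writing $(D_n f)(\z_{n-1},\l_n)$ as a Cauchy integral of $f(\z_{n-1},w)$ around a small circle about $\l_n$ and applying dominated convergence as $\Re z_j\to\infty$ (any $j\in I_{n-1}$, other variables fixed); the integrand vanishes pointwise since $f\in\B^n_0$. The maximum principle then yields
\[
\|g\|_{\B^{n-1}_0}\le\int_{\R_+^{n-1}}(H_{I_n}f)(\aa,\Re\l_n)\,d\aa,
\]
which is finite for almost every $\Re\l_n>0$ by Fubini, and hence for every $\Re\l_n>0$ by monotonicity of $H_{I_n}f$ in its last argument. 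Proposition \ref{Pr1} places $g$ in $\B^{n-1}\cap\B^{n-1}_0$.

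The integral identity itself is Fubini applied to \eqref{OperA1}. Splitting $\lb=(\lb_{n-1},\l_n)$ and factoring $\K_n(\A,\ov\lb)=\K_{n-1}(\A_{n-1},\ov\lb_{n-1})K(A_n,\ov\l_n)$ (the resolvents commute), the (GSF$_n$) condition controls the double integral: $\int_{\R^n}|\langle\K_n(\A,\ov\lb)x,x^*\rangle|\,d\bb$ is bounded by $(2/\pi)^n\gamma_\A\|x\|\|x^*\|\prod_j\a_j^{-1}$, which combines with $|(\D_n f)(\lb)|\le(H_{I_n}f)(\aa)$ and the factor $\prod_j\a_j$ from $dV_n$ to give the finite bound $(2/\pi)^n\gamma_\A\|x\|\|x^*\|\,\|f\|_{\B^n_0}$ on the joint integral. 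Setting $y:=K(A_n,\ov\l_n)x\in X$, the inner integral over $\lb_{n-1}$ equals $\langle g(\A_{n-1})y,x^*\rangle=\langle(D_n f)(\A_{n-1},\l_n)K(A_n,\ov\l_n)x,x^*\rangle$ by \eqref{OperA1} applied to $\A_{n-1}$ and $g$. The remaining step---and the main delicacy of the argument---is to move $K(A_n,\ov\l_n)$ across $(D_n f)(\A_{n-1},\l_n)$. Since the latter lies a priori only in $L(X,X^{**})$, the composition on the right-hand side must be read with $K(A_n,\ov\l_n)$ replaced by its double adjoint; I will justify this by transferring $K(A_n,\ov\l_n)^*$ onto $x^*$ inside the weak-$\ast$ integral defining $(D_n f)(\A_{n-1},\l_n)$, using that $K(A_n,\ov\l_n)\in L(X)$ commutes with every factor $\K_{n-1}(\A_{n-1},\ov{\bm\mu})$ appearing there.
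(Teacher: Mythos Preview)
Your proposal is correct and follows essentially the same route as the paper.  For part (1) the paper simply refers back to Corollary~\ref{CorR} and the definition \eqref{def2}; your unwinding of the alternating sum to identify $\D_\O f_{\O,0}=\D_\O f_\O$ is exactly the missing step behind that reference.  For part (2) the paper cites Proposition~\ref{3.2.0} (with $\O=I_{n-1}$, $\Psi=\{n\}$) to place $(D_nf)(\cdot,\l_n)$ in $\B^{n-1}$, whereas you go via Proposition~\ref{Pr1}; both arguments are valid and rest on the same monotonicity/Fubini observation.  The integral identity is Fubini in both accounts, and you are right to flag the $L(X,X^{**})$ issue: the paper writes $K(A_n,\ov\l_n)(D_nf)(\A_{n-1},\l_n)$ without comment, but your reading via $K(A_n,\ov\l_n)^*x^*$ is the intended interpretation and makes the step rigorous.
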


\begin{proof}  The first statement is proved in Corollary \ref{CorR}.

 Proposition \ref{3.2.0}, with $\O=I_{n-1}$ and $\Psi=\{n\}$, establishes that 
 $(D_n f)(\cdot,\lambda_n)\in \mathcal{B}^{n-1}_0$ for all $\l_n\in\C_+$.   
Fubini's theorem shows that
\begin{align*}
\hskip-20pt\lefteqn{\langle f(\mathcal{A})x,x^* \rangle} \ \\
 %&=\int_{\C_{+}^n} \langle \mathcal{K}(\mathcal{A},\ov{\lb})x,x^*\rangle\, 
%(\D_n f)(\lb)\,dV_n(\lb) \\
&=\int_{\C_{+}^n}
\langle \K_{n-1}(\A_{n-1},\ov{\lb}_{n-1})K(A_n,\ov{\lambda}_n)
x,x^* \rangle \,(\D_n f)(\lb_{n-1},\l_n)\,dV_n(\lb_{n-1},\l_n) \\
&=\int_{\C_{+}}
\langle K(A_n,\ov{\lambda}_n)(D_n f)(\A_{n-1},\lambda_n)
x,x^{*}\rangle \, dV(\lambda_n), \notag
\end{align*}
for all $x \in X$ and $x^* \in X^*$.
\end{proof}

In order to justify some applications of Fubini's theorem, and differentiation through integral signs, in the next section, we will apply estimates for $n$-tuples of operators of the form $\A+\t := (A_j+t_j)_{j\in I_n}$,  where $t_j>0$, and we will use the following fact.   For $n=1$, it is a special case of \cite[Corollary 4.5]{BGT}.    Here $(T(\t))_{\t\in\R_+^n}$ are the shifts as defined in (\ref{Tdef}).

\begin{lemma}  \label{shift3}
Assume that $\A$ satisfies the {\rm(GSF$_n$)} condition.  
Let $f \in \B_0^n$ and $\t \in \R_+^n$.   Then
\[
f(\mathcal A +\t)=(T(\t) f)(\mathcal A).
\] 
\end{lemma}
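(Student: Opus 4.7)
The plan is to transport the shift $\t$ from the operator argument $\A+\t$ to the function argument $T(\t)f$ by means of a contour-shift argument patterned on the proof of Lemma~\ref{tflip}, now carried out inside the sesquilinear pairing with $x\in X$ and $x^*\in X^*$.

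First I would unpack both sides via the defining formula \eqref{OperA1}. The tuple $\A+\t$ still satisfies (GSF$_n$) with constant at most $\gamma_\A$ (by the substitution $\alpha_j\mapsto\alpha_j+t_j$ in the defining integral), so $f(\A+\t)$ is well defined as an element of $L(X,X^{**})$. Using the identity $(A_j+t_j+\ov\l_j)^{-2}=(A_j+\ov{\l_j+t_j})^{-2}$, this gives
\[
\langle f(\A+\t)x,x^{*}\rangle = \int_{\C_+^n}\langle\K_n(\A,\ov{\lb+\t})x,x^{*}\rangle\,(\D_n f)(\lb)\,dV_n(\lb).
\]
On the other hand, horizontal shifts preserve $\B_0^n$ (by Lemma~\ref{C0} and the preservation of vanishing at infinity), and $\D_n(T(\t)f)(\lb)=(\D_n f)(\lb+\t)$, whence
\[
\langle(T(\t)f)(\A)x,x^{*}\rangle = \int_{\C_+^n}\langle\K_n(\A,\ov\lb)x,x^{*}\rangle\,(\D_n f)(\lb+\t)\,dV_n(\lb).
\]

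I would then identify these two right-hand sides variable by variable, exactly as in the proof of Lemma~\ref{tflip}. Fix all variables except $\l_j=\alpha_j+i\beta_j$, with Fubini's theorem justified by the (GSF$_n$)-integrability of the operator factor against the bounded function $\D_n f$. Substituting $\l_j=t_j+i\beta_j$ turns the inner real-line integration in $\beta_j$ into one over the line $t_j+i\R$, and Cauchy's theorem applied to the scalar integrand on the strip $\{0\le\Re\l_j\le t_j\}$ shifts the contour back to the imaginary axis, thereby transferring the shift $t_j$ from the resolvent factor to the argument of $\D_n f$. Iterating for $j=1,\dots,n$ produces the equality of the two displayed integrals, and the identity $f(\A+\t)=(T(\t)f)(\A)$ in $L(X,X^{**})$ then follows upon taking suprema over unit vectors $x\in X$, $x^*\in X^*$.

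The main technical obstacle is the justification of the contour shift in the presence of the operator-valued pairing. Holomorphicity of the scalar integrand on the strip is immediate from $\sigma(A_j)\subseteq\ov\C_+$ together with the holomorphicity of $\D_n f$, and its pointwise boundedness on the closed strip follows from standard resolvent control together with Cauchy's inequality applied to the bounded function $f$. The delicate issue is the vanishing of the integrals over the horizontal arcs at heights $\pm R$ as $R\to\infty$: unlike the scalar case of Lemma~\ref{tflip}, where the kernel $(z+\alpha+t-\l)^{-2}$ decays like $|\l|^{-2}$, here the operator-valued factor $(A_j+\alpha_j+t_j-\l_j)^{-2}$ is merely bounded on the strip. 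This is the same obstruction handled in \cite[Corollary~4.5]{BGT} for the case $n=1$, and the argument there extends to $n\ge2$ without essential change, because the other variables appear only as spectator factors during the $j$-th contour shift.
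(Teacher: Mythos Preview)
Your proposal is correct and follows exactly the approach the paper indicates: replace the scalar kernel $\K_n(\z,\ov\lb)$ in the proof of Lemma~\ref{tflip} by the pairing $\langle\K_n(\A,\ov\lb)x,x^*\rangle$ and carry out the same coordinate-by-coordinate contour shift. The paper merely says this ``can easily be proved by the same method'' and cites \cite[Corollary~4.5]{BGT} for $n=1$; your identification of the horizontal-arc decay as the only genuinely new technical point, and your remark that it is handled just as in \cite{BGT} with the remaining variables as spectators, matches the paper's intent.
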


This can easily be proved by the same method as in Lemma \ref{tflip}, with $\K_n(\z,\ov\lb)$ being replaced by $\langle \K_n(\A, \ov\lb)x,x^* \rangle$, $\K_n(\z+\t,\ov\lb)$ by $\langle \K_n(\A+\t, \ov\lb)x,x^* \rangle$,  and $(\D_nf)(\lb)$ by $(\D_n f)(\lb+\t)$.  

\section {The $\mathcal{B}^n$-calculus: developments}  \label{sect5}

In this section, we assume that $\mathcal{A}: =(A_1,\dots,A_n)$
is an $n$-tuple of commuting operators which satisfy the full {\rm(GSF)} condition on a Banach space $X$,
and we denote by $\Phi_\A$ the map $f \mapsto f(\A)$ from $\B^n$ to $L(X,X^{**})$,
 as defined in (\ref{def2}), (\ref{deffA}) and (\ref{defPA}).   
We will establish three properties:  
\begin{itemize}
\item the operator $f(\A)$  maps $X$ into $X$,  
\item the operator $\Phi_\A$ is an algebra homomorphism,
\item $\Phi_\A$ is the unique $\B^n$-calculus for $\A$.  
\end{itemize}
The first two properties establish that $\Phi_\A$ is a $\B^n$-calculus for $\A$.   
For $n=1$ all three properties have been proved, the first two in \cite[Theorem 4.4]{BGT}, and the third in \cite[Theorem 6.6]{BGT2}.   For $n=2$, they have been proved in \cite{Kob}.
The proofs in those papers can probably be adapted to cover the same properties for arbitrary $n\in\N$, 
but we will present more direct proofs of all three properties even for $n=1$.  

We proceed with three lemmas which combine to prove the three properties, 
but the three proofs are inter-related.
 We start by considering two non-zero elementary functions $f$ and $g$ in $\B^n$.   We denote their supports by $\Omega_f$ and $\Omega_g$ with cardinalities $k$ and $m$, respectively.   Thus $f$ and $g$ belong to the spaces $\B_0^{\O_f}$ and $\B_0^{\O_g}$, respectively.   Unlike the case when $n=1$, the two supports may be different.  As it turns out, this does not cause a serious complication.

\begin{lemma}\label{FA2n}
Let $\mathcal{A}=(A_1,\dots,A_n)$ be an $n$-tuple of operators which satisfy the full {\rm(GSF)} condition on a Banach space $X$.  Let $f$ and $g$ be elementary functions in $\B^n$, and assume that $g(\A+\t) \in L(X)$ for $\t = (t,\dots,t)$, $t\ge0$.   Then $f g$ is an elementary function in $\B^n$ and
\begin{equation}\label{AsF2}
(f g)(\mathcal{A})=f(\mathcal{A})g(\mathcal{A}).
\end{equation}
In particular,
\[
(f \, \bm r_\z)(\A) = f(\A) \r_\z(\A), \qquad \z\in\C_+^n.
\]
\end{lemma}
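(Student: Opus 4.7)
The claim that $fg$ is elementary follows directly from the definitions: the support of $fg$ is contained in $\O_f \cup \O_g$, and if any $z_j$ with $j \in \O_f \cup \O_g$ is set to infinity, then either $f$ or $g$ vanishes, so $fg$ does too. Hence $fg \in \B^n_{\O_f \cup \O_g, 0}$.

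For the product formula $(fg)(\A) = f(\A) g(\A)$, my plan is to exploit the reproducing formula of Corollary \ref{CorR}. Applied to $f$ (identified, via its support, with an element of $\B^k_0$ where $k = |\O_f|$), this yields
\[
\langle f(\A) y, x^* \rangle = \int_{\C_+^k} (\D_{\O_f} f)(\lb) \langle \K_k(\A_{\O_f}, \ov\lb) y, x^* \rangle \, dV_k(\lb)
\]
for every $y \in X$ and $x^* \in X^*$. The hypothesis with $t = 0$ ensures $g(\A) \in L(X)$, so substituting $y = g(\A) x$ gives
\[
\langle f(\A) g(\A) x, x^* \rangle = \int_{\C_+^k} (\D_{\O_f} f)(\lb) \langle \K_k(\A_{\O_f}, \ov\lb) g(\A) x, x^* \rangle \, dV_k(\lb).
\]

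The heart of the proof is to identify the bounded operator $\K_k(\A_{\O_f}, \ov\lb) g(\A)$ with $h_\lb(\A)$, where $h_\lb(\z) := \K_k(\z_{\O_f}, \ov\lb) g(\z) \in \B^n_{\O_f \cup \O_g, 0}$. I plan to verify this inductively on $|\O_f|$, reducing to a single factor: for each $j \in \O_f$, use the one-variable reproducing formula to write $(z_j + \ov\l_j)^{-2}$ as an integral involving the kernel $K(\cdot,\ov\mu)$, then combine Fubini with the integral definition \eqref{def2} of $g(\A)$ to establish $(A_j + \ov\l_j)^{-2} g(\A) = (r_{\ov\l_j, j}^{\,2}\, g)(\A)$. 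This step needs separate treatment depending on whether $j \notin \O_g$ or $j \in \O_g$, the latter requiring a Leibniz expansion on the shared variable. Once the identification is in place, substituting back and interchanging the order of integration (justified by the full {\rm(GSF)} condition via absolute integrability) matches the resulting expression with the reproducing formula applied to $fg$, yielding $\langle f(\A) g(\A) x, x^* \rangle = \langle (fg)(\A) x, x^* \rangle$.

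The ``in particular'' statement is then immediate by taking $g = \r_\z$: it is elementary in $\B^n$ with support $I_n$, and $\r_\z(\A + \t) = \prod_{j=1}^n (A_j + z_j + t)^{-1} \in L(X)$ for every $t \ge 0$ since $\sigma(A_j) \subseteq \ov\C_+$ and $\Re(z_j + t) > 0$. The main obstacle is the iterative identification of $\K_k(\A_{\O_f}, \ov\lb) g(\A)$ with $h_\lb(\A)$; the case $j \in \O_f \cap \O_g$ demands careful Leibniz bookkeeping together with multiple Fubini interchanges. The hypothesis $g(\A + \t) \in L(X)$ for \emph{all} $t \ge 0$, not only $t = 0$, may additionally be used as a regulariser: prove the identity first for shifted functions $T(\t) g$ with $t > 0$ (where the resolvent weights have uniform positive real-part bounds) and then pass to $t \to 0^+$ using the strong continuity of shifts established in Lemma \ref{C0}.
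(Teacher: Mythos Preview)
Your outline is plausible but takes a different route from the paper, and the step you flag as ``the main obstacle'' is precisely where the paper uses an idea you do not mention. The paper does \emph{not} attempt to identify $\K_k(\A_{\O_f},\ov\lb)\,g(\A)$ with $h_\lb(\A)$ by induction on $|\O_f|$. Instead it represents \emph{both} $f$ and $g$ by their reproducing formulas simultaneously, writes $(f_\t g_\t)(\A)$ as a triple integral over $(\z,\lb,\nub)$, swaps the $\z$-integral inside by Fubini, and observes that the resulting inner integral is, by the very definition \eqref{def2}, the pairing $\langle \big[\K_k(\cdot+\t_{\O_f},\ov\lb)\,\K_m(\cdot+\t_{\O_g},\ov\nub)\big](\A)x,x^*\rangle$. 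The point is that for $t>0$ the product of shifted kernels lies in $\LM^n$, where the Hille--Phillips calculus is already known to be multiplicative and to agree with the $\B^n$-calculus; this gives the product $\K_k(\A_{\O_f}+\t_{\O_f},\ov\lb)\,\K_m(\A_{\O_g}+\t_{\O_g},\ov\nub)$ in one stroke, with no case split on $\O_f\cap\O_g$ and no Leibniz bookkeeping. Unwinding then yields $(f_\t g_\t)(\A)=f(\A+\t)g(\A+\t)$, and one lets $t\to0$.

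Your inductive strategy can in principle be carried through, but the overlap case $j\in\O_f\cap\O_g$ is genuinely awkward: the Leibniz expansion of $\D_{\O_g}(r_{\ov\l_j,j}^{\,2}g)$ produces a term involving $\D_{\O_g\setminus\{j\}}g$, and matching this against the expression for $(A_j+\ov\l_j)^{-2}g(\A)$ requires a further layer of reproducing-formula manipulation in the $j$-th variable. The paper's appeal to $\LM^n$-multiplicativity bypasses all of this. Note also that the shift $\t$ with $t>0$ is not merely an optional regulariser: it is essential both to place the kernels in $\LM^n$ and to justify Fubini (the paper devotes the Appendix to this), so your closing remark about working at $t=0$ directly is unlikely to succeed without the shift.
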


\begin{proof}
If $f$ or $g$ is the zero function, the result holds.   So we assume that $f$ and $g$ are not zero, and their supports have cardinalities $k$ and $m$, respectively.  It is clear that $fg$ is an elementary function in $\B^n$, and its support is $\O := \O_f \cup \O_g$.   Consequently we can also assume that $\O = I_n$.
 
Let $\t=(t,\dots,t)\in (0,\infty)^n$, 
$f_{\t}(\z)=f(\z+\t)$ and $g_{\t}(\z)=g(\z+\t)$.
Applying (\ref{OperA1}) to $f$ and $g$, we obtain, for $\z\in\C_+^n$,
\begin{align*}
&(f_{\t} g_{\t})(\z)\\
&=\int_{\C_{+}^m}\int_{\C_{+}^k}
\left[\mathcal{K}_{k}(\z_{\O_f}+\t_{\O_f},\ov{\bm\l})\mathcal{K}_{m}(\z_{\O_g}+{\t}_{\O_g},\ov{\bm\nu})\right]
(\mathcal{D}_{\O_f}f)(\bm\l)(\mathcal{D}_{\O_g}g)(\bm\nu) \, dV_k(\bm\l)
\,dV_m(\bm\nu).
\end{align*}
Differentiation through the integral (which is easily justified) gives
\begin{align*}
 \mathcal{D}_n(f_{\t} g_{\t})(\z)
&=\int_{\C_{+}^m}\int_{\C_{+}^k}
\mathcal{D}_n \left[\mathcal{K}_{k}(\z_{\O_f}+\t_{\O_f},\ov{\bm\lambda})\mathcal{K}_{m}(\z_{\O_g}+\t_{\O_g},\ov{\bm\nu})\right] \\
&\null\hskip90pt \times (\mathcal{D}_{\O_f}f)(\bm\lambda)(\mathcal{D}_{\O_g}g)(\bm\nu) \,dV_k(\bm\lambda)
\,dV_m(\bm\nu).
\end{align*}
Combining \eqref{def2},  
Lemma \ref{tflip}, and Fubini's theorem (see the Appendix for justification), 
we infer that
\begin{align}\label{FF12}
\lefteqn{\null\hskip15pt \langle (f_{\t} g_{\t})(\mathcal{A})x,x^{*}\rangle}\\
&=\int_{\C_{+}^n} \langle \mathcal{K}_{n}(\mathcal{A},\ov{\z})x,x^{*}\rangle 
\int_{\C_{+}^m}\int_{\C_{+}^k} \mathcal{D}_{n}\left(\mathcal{K}_{k}(\z_{\O_f}+\t_{\O_f},\ov{\bm\lambda})
\mathcal{K}_{m}(\z_{\O_g}+\t_{\O_g},\ov{\bm\nu})\right) \notag \\
& \hskip130pt \times (\mathcal{D}_{\O_f}f)(\bm\lambda)(\mathcal{D}_{\O_g}g)(\bm\nu)\,
\,dV_k(\bm\lambda) \,dV_m(\bm\nu) \,
dV_{n}(\z)\notag \\
&=\int_{\C_{+}^m}\int_{\C_{+}^k} (\mathcal{D}_{\O_f}f)(\bm\lambda)
(\mathcal{D}_{\O_g}g)(\bm\nu)\,
R_{\t}(\bm\lambda,\bm\nu)\, dV_k(\bm\lambda)\,dV_m(\bm\nu),\notag
\end{align}
where
\[
R_\t({\bm\lambda},\bm\nu)=\int_{\C_{+}^{n}}\langle \mathcal{K}_{n}(\mathcal{A},\ov{\z})x,x^{*}\rangle  
\mathcal{D}_{n}\left(\mathcal{K}_{k}(\z_{\O_f}+\t_{\O_f},\ov{\bm\lambda})
\mathcal{K}_{m}(\z_{\O_g}+\t_{\O_g},\ov{\bm\nu})\right) dV_{n}(\z).
\]

For all  $\bm\lambda\in \C^k_{+}$ and $\bm\nu\in \C_{+}^m$,   
\[
\mathcal{K}_{k}(\cdot+\t_{\O_f},\ov{\bm\lambda})\in  \mathcal{LM}^k\cap \mathcal{B}_0^k,\qquad
\mathcal{K}_{m}(\cdot+\t_{\O_g},\ov{\bm\nu})\in  \mathcal{LM}^m\cap \mathcal{B}_0^m,
\]
as functions of $\z_{\O_f}$ and $\z_{\O_g}$, respectively.   
 Both these functions, and their product, are in $\mathcal{LM}^n$.   
Using \eqref{def2} and the fact that the Hille-Phillips calculus is an algebra homomorphism, we see that
\begin{align}
R_{\t}(\bm\lambda,\bm\nu) 
&=\langle [\mathcal{K}_{k}(\cdot+\t_{\O_f},\ov{\bm\lambda})
\mathcal{K}_{m}(\cdot+\t_{\O_g},\ov{\bm\nu})](\mathcal{A})x,x^{*}\rangle\\
&=\langle \mathcal{K}_{k}(\mathcal{A}_{\O_f}+\t_{\O_f},\ov{\bm\lambda})
\mathcal{K}_{m}(\mathcal{A}_{\O_g}+\t_{\O_g},\ov{\bm\nu}) x,x^{*}\rangle.\notag
\end{align}
Thus, by (\ref{FF12}),  \eqref{def2}, and Lemma \ref{shift3},
\begin{align}
\lefteqn{\null\hskip10pt \langle (f_{\t} g_{\t})(\mathcal{A})x,x^{*}\rangle}\\
&=\int_{\C_{+}^m}\int_{\C_{+}^k} (\mathcal{D}_{\O_f}f)(\bm\lambda)
(\mathcal{D}_{\O_g}g)(\bm\nu) \nonumber\\
&\null \hskip70pt \langle \mathcal{K}_{k}(\mathcal{A}_{\O_f}+\t_{\O_f},\ov{\bm\lambda})
\mathcal{K}_{m}(\mathcal{A}_{\O_g}+\t_{\O_g},\ov{\bm\nu}) x,x^{*}\rangle
\,dV_k(\bm\lambda)\,dV_m(\bm\nu)  \nonumber\\
&= \int_{\C_+^k} \langle (\D_{\O_f}f)(\lb) \langle g(\A+ \t) \K_k(\A_{\O_f} + \t_{\O_f},\ov\lb)x,x^* \rangle \, dV_k(\lb) \nonumber\\
&= \int_{\C_+^k} \langle (\D_{\O_f}f)(\lb) \langle \K_k(\A_{\O_f} + \t_{\O_f},\ov\lb) g(\A+\t) x,x^* \rangle \, dV_k(\lb) \nonumber\\
&=\langle f(\mathcal{A}+\t)g(\mathcal{A}+\t)x,x^{*}\rangle \nonumber.
\end{align}
Hence, for all $t>0$,
\begin{equation}\label{FR2}
(f_{\t} g_{\t})(\mathcal{A})=f_{\t}(\mathcal{A})g_{\bm\t}(\mathcal{A}).
\end{equation}
Letting  $t\to 0$ in (\ref{FR2}) and using the strong continuity of the shifts on $\B^n$, we obtain the assertion (\ref{AsF2}).
\end{proof}

Next we prove that for ``smoothed'' functions $f \in \mathcal B_0^n$ 
the reproducing formula \eqref{repf} can be interpreted
as a $\mathcal B_0^n$-valued Bochner integral.   Here $\mathbf{1} = (1,\dots,1) \in \R^n$,
and $T(\t)$ is defined in (\ref{Tdef}).

\begin{lemma}\label{l1n}
Let $f\in \mathcal{B}_0^n$ and  $\t = (t,\dots,t) \in (0,\infty)^n$.  Define $F_\t: \C_+^n \to \B_0^n$ by
\[
F_\t(\lb) = \big(\D_n \left(f \r_{\1}^2\right)\!\!\big)(\lb) \, \r_{\t+\ov\lb}^2, \qquad \lb\in\C_+^n.
\]
Then $F_\t$ is continuous and Bochner-integrable with respect to the measure $dV_n(\lb)$.   Moreover,
\begin{equation} \label{LLn}
\left(- \frac{2}{\pi} \right)^n\int_{\C_+^n} F_\t(\lb)\,dV_n(\lb)  =  f_\t  \r_{\1+\t}^2. 
\end{equation}
\end{lemma}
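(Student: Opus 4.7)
My plan is to verify the three claims in the order stated: continuity of $F_\t$ into $\B_0^n$, integrability of $\|F_\t(\cdot)\|_{\B_0^n}$ with respect to $dV_n$, and evaluation of the Bochner integral via the reproducing formula of Lemma \ref{tflip}. First observe that $f\r_\1^2 \in \B_0^n$, since $\r_\1^2\in\B_0^n$ by Example \ref{exres} and $\B_0^n$ is a subalgebra of $\B^n$; in particular the scalar factor $\lb\mapsto(\D_n(f\r_\1^2))(\lb)$ is holomorphic in $\lb$, hence continuous. For the $\B_0^n$-valued factor $\lb\mapsto\r_{\t+\ov\lb}^2$, I invoke the continuity of $\z\mapsto\r_\z$ from $\C_+^n$ to $\B_0^n$ recorded in Example \ref{exres}, composed with the continuous substitution $\lb\mapsto\t+\ov\lb$ and with squaring (continuous because $\B_0^n$ is a Banach algebra). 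This yields continuity of $F_\t$.

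For Bochner integrability, Example \ref{exres} gives $\|\r_{\t+\ov\lb}^2\|_{\B_0^n}=\prod_{j=1}^n(t+\Re\l_j)^{-2}$. I then expand $\D_n(f\r_\1^2)$ by the Leibniz rule,
\[
\D_n(f\r_\1^2) \;=\; \sum_{\Psi\subseteq I_n}(\D_\Psi f)(\D_{I_n\setminus\Psi}\r_\1^2),
\]
and bound $|\D_\Psi f(\lb)|\le(H_\Psi f)(\aa_\Psi)$ (which is $\bb$-independent) while controlling the explicit factor $\D_{I_n\setminus\Psi}\r_\1^2(\lb)$ by a constant times $\prod_{j\notin\Psi}|\l_j+1|^{-3}\prod_{j\in\Psi}|\l_j+1|^{-2}$. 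Integrating first in $\bb$ via the one-dimensional estimate $\int_\R|\l_j+1|^{-s}\,d\b_j = O((1+\alpha_j)^{1-s})$ for $s>1$, and then in $\aa$ against the weight $\prod_j\alpha_j(t+\alpha_j)^{-2}$ coming from the norm and from $dV_n=\prod_j\alpha_j\,dS_n$, I obtain a sum over $\Psi$ in which the $\aa_{\Psi^c}$-integrals reduce to convergent one-dimensional integrals and the $\aa_\Psi$-integral is dominated (since $\alpha_j(t+\alpha_j)^{-2}(\alpha_j+1)^{-1}$ is bounded on $\R_+$) by a constant depending on $t$ times $\|f\|_{\B^n_\Psi}$, which is finite because $f\in\B^n$.

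For the identity \eqref{LLn}, I use that pointwise evaluation $g\mapsto g(\z)$ is a bounded linear functional on $\B_0^n$ (via $|g(\z)|\le\|g\|_\infty\le 2^n\|g\|_{\B_0^n}$ by Proposition \ref{Pr1}), so it commutes with Bochner integration. Evaluating the left-hand side of \eqref{LLn} at $\z\in\C_+^n$ and absorbing the factor $(-2/\pi)^n\prod(z_j+t+\ov\l_j)^{-2}$ into $\K_n(\z+\t,\ov\lb)$ gives
\[
\int_{\C_+^n}\K_n(\z+\t,\ov\lb)\,\D_n(f\r_\1^2)(\lb)\,dV_n(\lb),
\]
which by Lemma \ref{tflip} applied to $f\r_\1^2\in\B_0^n$ equals $(f\r_\1^2)(\z+\t) = f_\t(\z)\r_{\1+\t}^2(\z)$. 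Since this holds for every $\z$, \eqref{LLn} is an identity in $\B_0^n$.

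The main technical obstacle is the integrability bound in the second step: without the smoothing factor $\r_\1^2$, the derivative $\D_n f$ would not in general be integrable against $\prod(t+\alpha_j)^{-2}\,dV_n$. The multiplier $\r_\1^2$ supplies exactly the $\bb$-decay needed (through the factors $(\l_j+1)^{-2}$ or $(\l_j+1)^{-3}$ arising after differentiation) so that the $\bb$-integrations converge, while the boundedness of $\alpha_j(t+\alpha_j)^{-2}(\alpha_j+1)^{-1}$ on $\R_+$ controls the potentially singular behaviour near the origin, after which the finiteness of the seminorms $\|f\|_{\B^n_\Psi}$ finishes the estimate.
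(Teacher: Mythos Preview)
Your argument is correct and follows the same overall architecture as the paper: continuity via Example~\ref{exres}, a Leibniz expansion of $\D_n(f\r_\1^2)$ for the integrability estimate, and evaluation of the Bochner integral by composing with point evaluations and invoking Lemma~\ref{tflip}. The one place where you diverge is in bounding the terms of the Leibniz expansion. You control $|\D_\Psi f(\lb)|$ by $(H_\Psi f)(\aa_\Psi)$ and then use finiteness of $\|f\|_{\B^n_\Psi}$; the paper instead applies Cauchy's inequality to get $|\D_\O f(\lb)|\le \|f\|_\infty/(2^{|\O|}\prod_{j\in\O}\a_j)$, so that the final bound depends only on $\|f\|_\infty$ and yields the explicit constant $(5\pi/2t)^n\|f\|_\infty$. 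Both estimates are valid; the paper's is a little sharper in that it uses only boundedness of $f$, whereas yours uses the full $\B^n$-seminorm structure, but either suffices for the lemma.
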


\begin{proof}
The function $\lb \mapsto \r_{\t+\ov{\lb}}^2$ is continuous from $\C_+^n$ to $\B_0^n$, and hence $F_\t$ is continuous. If $\l_j=\a_j+i\beta_j$, then
\[
\|\r_{\t+\ov\lb}^2\|_{\mathcal{B}_0^n} = \prod_{j=1}^n \frac{1}{(\a_j+t)^2}.
\]
Moreover, $\big(\D_n(f r_1^2)\big)(\lb)$ is the sum of $2^n$ terms of the following form, indexed by $\O\in\P_n$, where $k= |\O|$:
\[
J_{n,\O}(\lb) := (-2)^{n-k} (\D_{\O}f)(\lb) \prod_{j\in\O} r_1(\l_j)^2 \prod_{j\in\O^c} r_1(\l_j)^3,
\]
and, by a standard estimate,
\[
|(\D_\O f)(\lb)| \le  \frac{ \|f\|_\infty}{2^k \prod_{j\in\O} \a_j}.
\]
 It follows that
\begin{align*}
\lefteqn{\hskip0pt\int_{\C_+^n} |J_{n,\O}(\lb)|\, \|\r^2_{\t+\ov{\lb}}\|_{\B_0^n}\, dV_n(\lb)} \\
&\le 2^{n-2k}\|f\|_\infty \int_{\C_+^n} \prod_{j\in\O} \frac{\a_j}{\a_j(\a_j+t)^2|1+\l_j|^2} \prod_{j\in\O^c} \frac{\a_j}{(\a_j+t)^2|1+\l_j|^3} \, dS_n(\aa,\bb) \\
&\le 2^{n-2k} \pi^n \|f\|_\infty \left(  \int_0^\infty \frac{d\a}{(\a+t)^2} \right)^n = \frac{(2\pi)^n}{4^k t^n} \|f\|_\infty.
\end{align*}
Hence
\[
\int_{\C_+^n} |\D_n(f\r_1)(\lb) | \,\|\r^2_{\t+\ov\lb}\|_{\B_0^n} \,dV_n(\lb) \le  \frac {(2\pi)^n}{t^n}
  \sum_{k=0}^n { n \choose k} 4^{-k} \|f\|_\infty=  \left(\frac{5\pi}{2t}\right)^n \|f\|_\infty.
\]

Thus, $\int_{\C_+^n}\|F_\t(\lb)\|_{\mathcal B_0^n}\,dV_n(\lb) <\infty$, so the Bochner integral of $F_t$ exists.
Since point evaluations are continuous linear functionals on $\mathcal B_0^n$, the function on the left-hand side of \eqref{LLn} maps  $\z\in\C_+^n$ to
\[
\left(- \frac{2}{\pi} \right)^n \int_{\C_+^n} \left(F_\t(\lb)\right)(\z) \,dV_n(\lb) 
= 
\int_{\C_+^n}  \K_n(\z+\t,\ov \lb) \D_n(f \r_\1^2) \,dV_n(\lb) =  (f \r_\1^2)(\z+\t),
\]
using Lemma \ref{tflip}.   Hence \eqref{LLn} holds. 
\end{proof}

Next, we apply the mapping $\Phi_\A$ to both sides of \eqref{LLn},
and then deduce the three properties of $\Phi_\A$ simultaneously.  
We say that
\[
\Upsilon:\,\mathcal{B}_0^n\mapsto L(X,X^{**})
\]
is a (bounded) ``semi-homomorphism'' if $\Upsilon$ is a bounded linear map satisfying
\begin{equation}\label{Phin}
\Upsilon(f\cdot \r_\z)=\Upsilon(f)(\z+\A)^{-1}, \qquad f \in\B_0^n, \;\z \in \C_+^n.
\end{equation}
Lemma \ref{FA2n} shows that $\Phi_A$ is a semi-homomorphism.  

\begin{lemma}\label{l2n}
Let $\A = (A_1, \dots, A_n)$  be an $n$-tuple of commuting  operators on $X$ satisfying \eqref{spr}.  Let
\[
\Upsilon:\,\mathcal{B}_0^n\mapsto \mathcal{L}(X,X^{**})
\]
be a semi-homomorphism such that  $\Upsilon(\r_\z)= \r_\z(\A), \z\in \C_{+}^n$.
Then, for all $f\in \mathcal{B}_0^n$,
\begin{equation}\label{Limn}
\Upsilon(f ) \r_\1(\A)^{2}=\lim_{t\to 0}\,Q_{t}(f;\A),
\end{equation}
in operator-norm, 
where $Q_t(f;\A)\in {L}(X)$ is defined for $t >0$ by
\begin{equation}\label{QQn}
Q_t(f;\A):= \left(- \frac{2}{\pi}\right)^n 
\int_{\C_+^n} (\D_n(f\r_\1^2))(\lb) (\A+\t+\ov\lb)^{-2}\,dV_n(\lb).
\end{equation}
Consequently, $\Upsilon$ is uniquely determined by its values on $\{\r_\z: \z \in \C_+^n\}$ and
\begin{equation}\label{LL1n}
\Upsilon(f)\in {L}(X),\quad f\in \mathcal{B}_0^n.
\end{equation}
\end{lemma}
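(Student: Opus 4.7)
The plan is to apply $\Upsilon$ to both sides of the $\mathcal{B}_0^n$-valued Bochner-integral identity \eqref{LLn} from Lemma \ref{l1n}, simplify the result using the semi-homomorphism property \eqref{Phin} together with the hypothesis $\Upsilon(\r_\z)=\r_\z(\A)$, and then pass to the limit $t\to 0^+$ in the operator norm.

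For the right-hand side of \eqref{LLn}, factoring as $f_\t\,\r_{\1+\t}^2=(f_\t\,\r_{\1+\t})\,\r_{\1+\t}$ and applying \eqref{Phin} twice gives $\Upsilon(f_\t\,\r_{\1+\t}^2)=\Upsilon(f_\t)(\A+\1+\t)^{-2}$. For the left-hand side, boundedness of $\Upsilon$ lets it commute with the Bochner integral, and pointwise
\[
\Upsilon(F_\t(\lb))=(\D_n(f\r_\1^2))(\lb)\,\Upsilon(\r_{\t+\ov\lb}^2)=(\D_n(f\r_\1^2))(\lb)\,(\A+\t+\ov\lb)^{-2},
\]
by the same iteration of \eqref{Phin} combined with $\Upsilon(\r_\z)=\r_\z(\A)$. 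Comparing the two sides yields the key identity
\[
Q_t(f;\A)=\Upsilon(f_\t)(\A+\1+\t)^{-2},\qquad t>0.
\]

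Now I would let $t\to 0^+$. Lemma \ref{C0} gives $f_\t\to f$ in $\mathcal{B}^n$-norm, so boundedness of $\Upsilon$ yields $\Upsilon(f_\t)\to\Upsilon(f)$ in $L(X,X^{**})$. By \eqref{spr} each resolvent $(A_j+1+t)^{-1}$ is holomorphic in $t$, and the identity
\[
(A_j+1+t)^{-1}-(A_j+1)^{-1}=-t\,(A_j+1+t)^{-1}(A_j+1)^{-1}
\]
together with local uniform boundedness of the resolvent implies $(\A+\1+\t)^{-2}\to\r_\1(\A)^2$ in $L(X)$. A standard product splitting then delivers $Q_t(f;\A)\to\Upsilon(f)\r_\1(\A)^2$ in operator norm, proving \eqref{Limn}.

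The two consequences now fall out of \eqref{Limn}. Since $Q_t(f;\A)\in L(X)$ for each $t>0$ and the convergence is in operator norm, $\Upsilon(f)\r_\1(\A)^2\in L(X)$; thus $\Upsilon(f)$ sends the dense subspace $\ran(\r_\1(\A)^2)\subseteq X$ into $X$, and since $X$ is closed in $X^{**}$ and $\Upsilon(f)\in L(X,X^{**})$, it follows that $\Upsilon(f)X\subseteq X$, i.e.\ $\Upsilon(f)\in L(X)$. For uniqueness, two semi-homomorphisms $\Upsilon_1,\Upsilon_2$ that coincide on $\{\r_\z:\z\in\C_+^n\}$ produce the same $Q_t(f;\A)$, hence $[\Upsilon_1(f)-\Upsilon_2(f)]\r_\1(\A)^2=0$, and the same density argument forces $\Upsilon_1(f)=\Upsilon_2(f)$. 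The main technical hurdle is the careful bookkeeping around the Bochner integral and the product limit; the implicit input at the end is the density of $\ran(\r_\1(\A)^2)$ in $X$, which rests on dense-definedness of each $A_j$ and on the regularity already available from the framework of the paper.
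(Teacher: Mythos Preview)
Your proof is correct and follows essentially the same route as the paper: apply $\Upsilon$ to both sides of the Bochner identity \eqref{LLn}, use the semi-homomorphism property \eqref{Phin} twice to evaluate both sides, and then let $t\to 0$ using Lemma~\ref{C0} and the resolvent identity. The concluding uniqueness and $L(X)$-valuedness arguments via density of the range of $\r_\1(\A)^2$ also match the paper's.
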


\begin{proof}
Using (\ref{Phin}),
Lemma \ref{l1n}, and Lemma \ref{shift3},
and applying $\Upsilon$ to both sides of \eqref{LLn},
 we have
\begin{align}\label{Zn}
\Upsilon(f_\t) \r_{\1+\t}(\A)^{2}=
\Upsilon(f_\t\cdot \r_{\1+\t}^{2})
=Q_{t}(f;\A), \qquad t > 0,
\end{align}
where $Q_t(f; A)$ is given by \eqref{QQn}, and the integral converges in the norm of $L(X)$.
By the continuity of shifts on $\mathcal B_0^n$, as in Lemma \ref{C0},
\[
\lim_{t\to 0}\,\|\Upsilon(f_\t)-\Upsilon(f)\|_{{L}(X,X^{**})}=0,
\]
and, by the resolvent identity,
\[
\lim_{t\to 0}\,\|\r_{\1+\t}(\A)- \r_{\1}(\A)\|_{{L}(X)}=0.
\]
Thus, \eqref{Zn} implies (\ref{Limn}) and \eqref{LL1n}. 

Since ${\rm dom}\, (A_j)$ is dense in $X$, $r_1(A_j)^2$ has dense range for $j\in I_n$, and then $\r_\1(\A)^2$ has dense range.   The uniqueness of $\Upsilon$ follows from this and  \eqref{Limn} and (\ref{QQn}).
Since $X$ is closed in $X^{**}$,  the assertion (\ref{LL1n}) holds.
\end{proof}

Now we can conclude that $\Phi_A$, as defined in (\ref{def2}) and (\ref{deffA}),  is a $\B^n$-calculus for $\A$, as defined in Definition \ref{deffc}.

\begin{thm} \label{BAC2}
Let $\mathcal{A}=(A_1,\dots,A_n)$
satisfy the full {\rm(GSF)} condition on a Banach space $X$.   
Then $\Phi_A$ is a $\B^n$-calculus for $\A$, and it is unique.
Consequently,  $\mathcal A$ admits the $\mathcal B^n$-calculus 
if and only if $\mathcal A$ satisfies the full {\rm(GSF)} condition.
\end{thm}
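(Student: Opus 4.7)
The plan is to combine the three preparatory lemmas, namely Lemmas \ref{FA2n}, \ref{l1n} and \ref{l2n}, with the elementary decomposition of Proposition \ref{p2.4}; the converse (``only if'') is already recorded as Corollary \ref{Bne}. My first move will be to verify that $\Phi_\A$ restricted to $\B_0^n$ is a semi-homomorphism in the sense of (\ref{Phin}). For $f \in \B_0^n$ and $\z \in \C_+^n$, Lemma \ref{FA2n} applies with $g = \r_\z$, since $\r_\z(\A+\t)$ is a product of resolvents and therefore lies in $L(X)$ for every $\t = (t,\dots,t)$ with $t \ge 0$; this yields $\Phi_\A(f \cdot \r_\z) = \Phi_\A(f)\r_\z(\A)$. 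Lemma \ref{l2n} then simultaneously promotes $\Phi_\A|_{\B_0^n}$ to a map into $L(X)$ and certifies its uniqueness among semi-homomorphisms with the prescribed values on resolvents.

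I would then leverage the elementary decomposition $f = \sum_{\O \in \P_n} f_{\O,0}$ to extend from $\B_0^n$ to all of $\B^n$. Each component $f_{\O,0}$ with support $\O \subsetneq I_n$ corresponds, under the canonical identification of $\B^n_{\O,0}$ with $\B_0^{|\O|}$, to a function handled by the previous step applied to the sub-tuple $\A_\O$, which manifestly inherits the full (GSF) condition. By definition (\ref{def2}) this already places $\Phi_\A(\B^n) \subseteq L(X)$. For multiplicativity, I would expand the product of two elementary decompositions into a double sum of elementary products $f_{\O,0}\, g_{\Psi,0}$ and apply Lemma \ref{FA2n} to each pair; its hypothesis that $g_{\Psi,0}(\A+\t) \in L(X)$ is now at our disposal, either because $\A+\t$ still satisfies the full (GSF) condition or via Lemma \ref{shift3} applied to $T(\t)g_{\Psi,0}$. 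Summation over $\O,\Psi$ then gives $(fg)(\A)=f(\A)g(\A)$.

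For uniqueness at the $\B^n$ level, I would argue as follows. Any $\B^n$-calculus $\Psi$ forces $\Psi(\r_\z)=\r_\z(\A)$ by multiplicativity, so $\Psi|_{\B_0^n}$ is a semi-homomorphism, and Lemma \ref{l2n} identifies it with $\Phi_\A|_{\B_0^n}$. For an elementary $f$ with support $\O \subsetneq I_n$, the restriction of $\Psi$ to the closed subalgebra $\B^n_{\O,0}$ is, under the identification with $\B_0^{|\O|}$, a semi-homomorphism sending each resolvent $r_{\l,j}$ ($j \in \O$) to the corresponding resolvent of $\A_\O$; Lemma \ref{l2n} applied to $\A_\O$ therefore forces this restriction to coincide with $\Phi_{\A_\O}$, hence with $\Phi_\A$ on $\B^n_{\O,0}$. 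Running this over all $\O$ and invoking linearity yields $\Psi=\Phi_\A$ on the whole of $\B^n$, while Corollary \ref{Bne} supplies the converse direction.

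I anticipate that the main delicate point will be the bookkeeping in the multiplicativity step: after expanding the elementary decomposition of a product, one must check carefully that the hypotheses of Lemma \ref{FA2n} are genuinely met for every pair of components, in particular that the shifted tuple $\A+\t$ still enjoys the full (GSF) condition uniformly in $t \ge 0$ (which follows by writing $(A_j+t+\a_j-i\b_j)^{-2}$ as the resolvent of $A_j$ at $(t+\a_j)-i\b_j$ and noting $t+\a_j>0$). Once this is cleanly in place, the remainder of the proof is essentially a linear-algebraic assembly of the ingredients already at hand.
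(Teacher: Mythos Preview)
Your proposal is correct and follows essentially the same route as the paper: bootstrap from Lemma~\ref{FA2n} (with $g=\r_\z$) to verify the semi-homomorphism property, invoke Lemma~\ref{l2n} to land in $L(X)$ and secure uniqueness on $\B_0^n$, then assemble everything via the elementary decomposition.  The only notable difference is in the uniqueness step for general $f\in\B^n$: rather than treating each summand $\B^n_{\O,0}$ separately via Lemma~\ref{l2n} for the sub-tuple $\A_\O$, the paper simply observes that $f\r_\1\in\B_0^n$, so any $\B^n$-calculus $\Psi$ satisfies $\Psi(f)\r_\1(\A)=\Psi(f\r_\1)=\Phi_\A(f\r_\1)=\Phi_\A(f)\r_\1(\A)$, and density of the range of $\r_\1(\A)$ finishes the job in one stroke.
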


\begin{proof}
Lemma \ref{l2n} shows that $\Phi_\A$ maps $\B_0^n$ into $L(X)$, and the same holds for the maps on $\B_{\O,0}^n$.  By Lemma \ref{FA2n}, $\Phi_\A$ is multiplicative on elementary functions and hence on  $\B^n$.    Then $\Phi_\A$ maps $\B^n$ into $L(X)$, and is multiplicative on all functions in $\B^n$.   In addition, $\Phi_A$ maps the function $\z \mapsto (z_j+\l)^{-1}$ to the operator $(A_j+\l)^{-1}$ for $\l\in\C_+, \, j\in I_n$, so $\Phi_\A$ is a $\B^n$-calculus for $\A$.

Let $f \in\B^n$.  Then $f \r_\1 \in \B^n_0$ and
\[
\Phi_\A(f) \r_\1(\A) = \Phi_A(f\r_\1).
\]
Since $\Phi_A(f\r_\1)$ is uniquely determined (Lemma \ref{l2n}) and the range of $\r_\1(\A)$ is dense in $X$, it follows that $\Phi_A(f)$ is uniquely determined. 

The last statement follows from the earlier statement and Corollary \ref{Bne}.
\end{proof}

\section{Possible simplifications}  \label{sect6}

In this section, we consider three possible types of simplification that might come into play when attempting to use the $\B$-calculus.   The first one would be a dramatic simplification but it is not valid in general, while the other two are valid but restricted in scope.

\subsection*{Reduction to one variable}
Let $\mathcal{A} = (A_1,\dots,A_n)$ be a commuting $n$-tuple where each $A_j \, (j \in I_n)$ has a $\B^1$-calculus.  One might hope that this automatically implies that $\A$ has a $\B^n$-calculus, but this is not valid in general. 

Let $p \in [1, \infty), p\ne2$, and let
 $\mathcal C_p$ be the $p$th Schatten-von Neumann ideal on a separable, infinite-dimensional, Hilbert space. 
Using Proposition 3 and Example 1 in \cite{Kos}, together with results from \cite{DG03} and \cite{CG08}, and the equivalence in Theorem \ref{BAC2} for $n=1$, 
it follows that there are commuting bounded operators $A_1$ and $A_2$ on  $\mathcal C_p$, each of which admit the $\B^1$-calculus, but $A_1+A_2$ does not admit the $\mathcal B^1$-calculus.    Hence, $(A_1, A_2)$  does not admit the $\mathcal B^2$-calculus, since otherwise $A_1 + A_2$  would admit the $\mathcal B^1$-calculus (see Proposition \ref{sums}).  Thus, the existence of $\mathcal B^1$-calculi for operators $A_1$ and $A_2$ separately does not imply the existence of the $\mathcal B^2$-calculus for $(A_1, A_2)$.

\subsection*{Mergers}
In some cases, two or more of the operators in an $n$-tuple $\A$ may coincide.  
We may merge some or all of the repeated operators to form an $m$-tuple  $\wt\A = (\wt A_k)_{k\in I_m}$, where there is a surjective function $\pi$ from $I_n$ to $I_m$ such that $A_j = \wt A_{\pi(j)}$ for all $j\in I_n$.   
If $\A$ satisfies (GSF), then it is trivial that $\wt\A$ also satisfies the full (GSF), because (GSF$_{\wt\O}$)  for $\wt\A$ coincides precisely with (GSF$_{\O}$) for $\A$ for some $\O\in\P_n$ where $\pi$ is a bijection from $\wt\O$ to $\O$.   
We show in this section that the $\B$-calculi for $\wt\A$ and $\A$ are compatible, thus establishing that mergers of this type within the $\B$-calculus are valid.   A similar result in a different context has been obtained  in \cite[Proposition 4.1]{AS}.

Let $f \in \B^n$, and define
\begin{equation} \label{merge}
(\Upsilon f)(\w) := f(\z),  \quad \w = (w_1,\dots,w_m) \in \C_+^m,  \; z_j = w_{\pi(j)}.
\end{equation}

\begin{prop} \label{merge2}
Let $\A$, $\wt\A$ and $\Upsilon$ be as above, and assume that $\A$ satisfies the full {\rm (GSF)} condition.   Let $f \in \B^n$.   Then $\Upsilon(f) \in \B^m$, and $(\Upsilon f)(\wt\A) = f(\A)$ in the setting of their respective $\B$-calculi.
\end{prop}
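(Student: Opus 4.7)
The plan is to first verify that $\Upsilon\colon \mathcal{B}^n \to \mathcal{B}^m$ is a bounded algebra homomorphism, and then deduce the operator identity from the uniqueness of the $\mathcal{B}^n$-calculus for $\mathcal{A}$ provided by Theorem \ref{BAC2}. The inheritance of the full (GSF) condition by $\wt{\mathcal{A}}$ has already been observed in the paragraph preceding the statement, so $\wt{\mathcal{A}}$ admits its own $\mathcal{B}^m$-calculus.

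For the boundedness of $\Upsilon$, I would compute derivatives by the chain rule: if $z_j = w_{\pi(j)}$, then $\partial_{w_l}(\Upsilon f)(\w) = \sum_{j\in\pi^{-1}(l)} (D_j f)(\z)$, and iterating over $l \in \Omega$ gives
\[
\mathcal{D}_\Omega(\Upsilon f)(\w) = \sum_\sigma (\mathcal{D}_{\sigma(\Omega)} f)(\z),
\]
where $\sigma$ runs over the sections $\sigma\colon \Omega \to I_n$ of $\pi$ (i.e.\ $\pi\circ\sigma = \mathrm{id}_\Omega$); each $\sigma$ is injective, so $\sigma(\Omega) \in \mathcal{P}_n$ has cardinality $|\Omega|$. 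When one fixes $\Re w_l = \alpha_l$ for $l \in \Omega$ and takes suprema, the resulting constraints on $\z$ (namely $\Re z_j = \alpha_{\pi(j)}$ for $j \in \pi^{-1}(\Omega)$, plus equality of imaginary parts of $z_j$ whenever $\pi(j)$ coincides) are a strict tightening of those defining $H_{\sigma(\Omega)} f$, so
\[
(H_\Omega (\Upsilon f))(\aa_\Omega) \le \sum_\sigma (H_{\sigma(\Omega)} f)\bigl((\alpha_{\pi(j)})_{j\in \sigma(\Omega)}\bigr).
\]
Integrating in $\aa_\Omega$ and using the bijection $\pi|_{\sigma(\Omega)}\colon \sigma(\Omega)\to\Omega$ as a change of variables yields $\|\Upsilon f\|_{\mathcal{B}^m_\Omega} \le \sum_\sigma \|f\|_{\mathcal{B}^n_{\sigma(\Omega)}}$. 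Summing over $\Omega \in \mathcal{P}_m$ gives a bound $\|\Upsilon f\|_{\mathcal{B}^m} \le C_{n,\pi}\|f\|_{\mathcal{B}^n}$. Linearity and multiplicativity of $\Upsilon$ are immediate from its pointwise definition, so $\Upsilon$ is a bounded algebra homomorphism.

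Now define $\Psi\colon \mathcal{B}^n \to L(X)$ by $\Psi(f) := (\Upsilon f)(\wt{\mathcal{A}})$, using the $\mathcal{B}^m$-calculus for $\wt{\mathcal{A}}$. Then $\Psi$ is a bounded algebra homomorphism as a composition of two such. For $j \in I_n$ and $\lambda \in \mathbb{C}_+$, the resolvent $r_{\lambda,j}(\z) = (z_j+\lambda)^{-1}$ satisfies $(\Upsilon r_{\lambda,j})(\w) = (w_{\pi(j)}+\lambda)^{-1} = r_{\lambda,\pi(j)}(\w)$, so
\[
\Psi(r_{\lambda,j}) = (\wt A_{\pi(j)} + \lambda)^{-1} = (A_j + \lambda)^{-1}.
\]
Hence $\Psi$ is a $\mathcal{B}^n$-calculus for $\mathcal{A}$ in the sense of Definition \ref{deffc}. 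By Theorem \ref{BAC2}, such a calculus is unique, so $\Psi = \Phi_{\mathcal{A}}$, which is precisely the asserted identity $(\Upsilon f)(\wt{\mathcal{A}}) = f(\mathcal{A})$.

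The one step that requires genuine care is the chain-rule estimate in the first paragraph, where one must match the sup/integral structure of $\|\cdot\|_{\mathcal{B}^m_\Omega}$ against that of the norms $\|\cdot\|_{\mathcal{B}^n_{\sigma(\Omega)}}$ via the combinatorics of sections of $\pi$; once this bookkeeping is done, the remainder of the proof is a direct appeal to the uniqueness machinery already in place.
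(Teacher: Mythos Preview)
Your proof is correct and follows the same overall architecture as the paper: establish that $\Upsilon$ is a bounded algebra homomorphism $\mathcal{B}^n\to\mathcal{B}^m$, then compose with the $\mathcal{B}^m$-calculus for $\wt{\mathcal{A}}$ and invoke uniqueness from Theorem~\ref{BAC2}. The second half of your argument is essentially identical to the paper's.

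The difference lies in how boundedness of $\Upsilon$ is proved. The paper reduces to the elementary case $m=n-1$ with $\pi(j)=\min(j,n-1)$, i.e.\ a single merger of two adjacent variables, handles that case by a two-case analysis (according to whether $n-1\in\Omega$), and then obtains the general statement by composing $n-m$ such simple mergers. You instead treat an arbitrary surjection $\pi$ in one stroke, expanding $\mathcal{D}_\Omega(\Upsilon f)$ via the chain rule as a sum over sections $\sigma\colon\Omega\to I_n$ of $\pi$, and bounding each summand by the corresponding $H_{\sigma(\Omega)}f$. Your section combinatorics specialises exactly to the paper's two cases when $\pi$ is a simple merger (there is one section if $n-1\notin\Omega$ and two if $n-1\in\Omega$), so the arguments are really the same computation organised differently. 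Your version avoids the induction but requires the reader to track the section bookkeeping; the paper's version is more elementary at each step but pays for it with an iteration. Either is fine.
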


\begin{proof}
To see that $\Upsilon f \in \B^m$,  we first consider the case when $m=n-1$, and $\pi(j) = \min(j,n-1)$.  Let $g = \Upsilon f$, and
$\O \in \P_{n-1}$. 

Firstly, assume that $z_{n-1}\notin\Omega$.   
Then $(\D_\O g)(\z) = (\D_\O f)(\z,z_{n-1})$, for $\z\in\C_+^{n-1}$, and
$(H_\O g)(\aa_\O) \le (H_\O f)(\aa_\O)$,  so $H_\O g$ is integrable.   Secondly,  assume that $n-1 \in \O$.   Let $\tilde\O = (\O \setminus\{n-1\}) \cup\{n\}$.    Then
\[
(\D_{\O} g) (\z)  =  (\D_{\O}f)(\z,z_{n-1}) + (\D_{\tilde\O}f) (\z,z_{n-1}).
\]
Hence
\[
(H_{\O} g)(\aa_{\O}) \le (H_{\O}f)(\aa_{\O}) + (H_{\tilde\O}f)(\aa_{\tilde\O}).
\]
It follows that $H_{\O}g$ is integrable.   Thus $g \in \B^{n-1}$, and the map $\Upsilon : \B^n \to \B^{n-1}$ is bounded.   
By symmetry of the variables, this holds for any simple merger of just one pair of variables.   
The general case follows by carrying out $n-m$ simple mergers. 

It is clear that $\Upsilon$ is a bounded algebra homomorphism from $\B^n$ to $\B^m$.  Let $\Upsilon(f) = \Phi_{\wt\A}(\Upsilon f)$, so $\Upsilon : \B^n \to L(X)$ is a bounded algebra homomorphism.  Let $r_{\l,j}(\z) = (z_j+\l)^{-1}$ where $\z\in\C_+^n$.  Then, for $j \in I_n$ and $\l\in\C_+$, 
\[
\Upsilon r_{\l,j} = r_{\l,\pi(j)},
\]
 and
\[
\Psi(r_{\l,j}) = (\wt A_{\pi(j)}+\l)^{-1} = (A_j+\l)^{-1}.
\]
It follows from the uniqueness statement in Theorem \ref{BAC2} that $\Upsilon = \Phi_\A$, as required.
\end{proof}

\subsection*{Sums of operators}

It was observed in \cite[Lemma 2.6(5)]{BGT} that if $f\in\B^1$, $b>0$ and $g(z) = f(bz)$, then $g \in \B^1$ and $\|g\|_{\B^1} = \|f\|_{\B^1}$. 
 Moreover, if $A$ satisfies the (GSF) condition, then $g(A) = f(bA)$.   
This can be extended to the  case of several variables, and the proofs are very simple changes of variables.   
Such scalings can be combined with mergers as above.  
For example, suppose that $f \in \B^2$ and $g(z) = (b_1z, b_2 z)$, where $b_1,b_2>0$, and $A$ satisfies the (GSF) condition, then $g(A) = f(b_1A,b_2A)$.

Here we discuss the case when the operators in $\A$ may be added together, in the way that the variables $z_j$ were added in  Proposition \ref{sum}.   We present only the case where all the operators are added, but more general cases can be considered by the same techniques.

Let $\A$ be an $n$-tuple of operators which satisfy the full (GSF) condition, $f \in \B^1$ and $f^{[n]}(\z) = f(z_1 + \dots + z_n)$ for $\z \in \C_+^n$.  By Proposition \ref{sum}, $f^{[n]} \in \B^n$.   Thus $f^{[n]}(\A)$ is defined in the $\B^n$-calculus, but $f(A_1+\dots +A_n)$ is not defined in the $\B^1$-calculus (except in rare cases), because the operator $G_0 := A_1+\dots + A_n$, with domain $\dom(G_0) = \dom(A_1) \cap \dots \cap \dom(A_n)$, is not closed.   However $\dom(G_0)$ is dense in $X$ and the individual operators $-A_j$ generate commuting bounded $C_0$-semigroups $(e^{-tA_j})_{t\ge0}$, and the operators $T_\A(t) := \prod_{j=1}^n e^{-tA_j}$ form a bounded $C_0$-semigroup whose negative generator $G$ is the closure of $G_0$ (see \cite[Theorem 1.9]{Dav}).    

The following result was proved for $n=2$ in \cite[Lemma 6.2.3]{Kob}. 

\begin{prop}  \label{sums}
Let $\A$ be an $n$-tuple of operators which satisfy the full {\rm(GSF)} condition, and let $G$ be the closure of $A_1 + \dots + A_n$.   Then $G$ satisfies {\rm(GSF$_1$)} and $f(G) = f^{[n]}(\A)$ for all $f\in\B^1$.
\end{prop}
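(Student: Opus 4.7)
The plan is to build a map $\Psi : \B^1 \to L(X)$ by $\Psi(f) := f^{[n]}(\A)$, recognise it as a $\B^1$-calculus for $G$, and then conclude by uniqueness. First, $\Psi$ is a bounded algebra homomorphism: by Proposition \ref{sum} the map $f \mapsto f^{[n]}$ is bounded linear from $\B^1$ to $\B^n$, and it is multiplicative because
\[
(fg)^{[n]}(\z) = f(z_1+\dots+z_n)\,g(z_1+\dots+z_n) = f^{[n]}(\z)\,g^{[n]}(\z);
\]
composing with the bounded algebra homomorphism $\Phi_\A : \B^n \to L(X)$ from Theorem \ref{BAC2} yields the required $\Psi$ with values in $L(X)$.

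Next I would identify $\Psi(r_\l) = (G+\l)^{-1}$ for $\l\in\C_+$. The function $r_\l^{[n]}(\z) = (z_1+\dots+z_n+\l)^{-1}$ lies in $\LM^n$, since it is the Laplace transform of the measure $\mu_\l$ on $\R_+^n$ obtained by pushing $e^{-\l t}\,dt$ forward under the diagonal embedding $t\mapsto(t,\dots,t)$. By the compatibility of the $\B^n$-calculus with the Hille-Phillips calculus on $\LM^n$ (noted just after equation \eqref{defK}),
\[
r_\l^{[n]}(\A) = \int_{\R_+^n} T_\A(\t)\,d\mu_\l(\t) = \int_0^\infty e^{-\l t}\,T_\A(t)\,dt = (G+\l)^{-1},
\]
where the final equality uses that $-G$ generates the bounded $C_0$-semigroup $T_\A$.

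Now the restriction of $\Psi$ to $\B^1_0$ is a bounded algebra homomorphism sending $r_\l$ to $(G+\l)^{-1}$, so the $n=1$ case of Proposition \ref{nec} forces $G$ to satisfy the (GSF$_1$) condition; this is the first assertion of the proposition. Theorem \ref{BAC2} then supplies a unique $\B^1$-calculus $\Phi_G$ for $G$. Both $\Phi_G$ and $\Psi$ are $\B^1$-calculi for $G$ that agree on the resolvents $r_\l$, so uniqueness in Theorem \ref{BAC2} forces $\Psi(f) = \Phi_G(f) = f(G)$ for every $f\in\B^1$, which gives the second assertion.

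I do not foresee a serious obstacle; the main conceptual point is the identification $r_\l^{[n]}(\A) = (G+\l)^{-1}$, which lets the diagonal structure of $G$ be read off through the Hille-Phillips calculus, after which Proposition \ref{nec} and the uniqueness statement in Theorem \ref{BAC2} do the remaining work. An alternative route would be to verify (GSF$_1$) for $G$ by hand, using $(G+\l)^{-2} = (r_\l^2)^{[n]}(\A)$ and the full (GSF) condition for $\A$, but this seems strictly more laborious than the uniqueness argument above.
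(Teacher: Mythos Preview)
Your proof is correct and follows essentially the same route as the paper: define $\Psi(f)=f^{[n]}(\A)$ as a bounded algebra homomorphism (via Proposition \ref{sum} and Theorem \ref{BAC2}), identify $r_\l^{[n]}(\A)=(G+\l)^{-1}$ through the Hille--Phillips calculus using the diagonal measure, deduce (GSF$_1$) for $G$ from the existence of a $\B^1$-calculus, and finish by uniqueness. The paper's proof is slightly terser in places, but the structure and the key computation are identical.
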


\begin{proof}
The mapping $f \mapsto f^{[n]}(\A)$ is a bounded algebra homomorphism from $\B^1$ to $L(X)$.
It suffices to prove that $r_\l^{[n]}(\A) = r_\l(G)$ for $\l\in\C_+$.  Then $G$ has a $\B^1$-calculus, so $G$ satisfies (GSF$_1$), and uniqueness of the $\B^1$-calculus implies that $f(G) = f^{[n]}(\A)$. 

 Let $\mu$ be the measure on $\C_+^n$ given by $\mu(E) = \int_{\tilde E} e^{-\l t} \,dt$, where 
$\tilde E =\{t\in\C_+: (t,t,\dots,t) \in E\}$.   Then $r_\l^{[n]}$  is the Laplace transform of the measure $\mu$.  Since the $\B^n$-calculus agrees with the HP-calculus on $\mathcal{LM}^n$,
\begin{align*}
r_\l^{[n]}(\A)x &= \int_{\R_+^n} \bigg(\prod_{j=1}^n e^{-t_jA_j}\bigg)x \, d\mu(\t) 
%&= \int_{\R_+} e^{-\l t} \prod e^{-t A_1}  \dots  e^{-t A_n} x\,dt, \qquad x\in X.
= \int_{\R_+} e^{-\l t} e^{-t G}x\,dt = r_\l(G) x
\end{align*}
for all $x \in X$.
\end{proof}

\section{Spectral mapping properties}  \label{sect7}
Let $\A$ be an $n$-tuple of commuting operators on a Banach space $X$, satisfying the full (GSF) condition, so $\A$ has a $\B^n$-calculus.   
We will use the Shilov joint spectrum of $\A$ as in \cite[Definition 9]{Mir} based on \cite[Theorem 16.3.1]{HP}.

Let $\Af$ be the (commutative) Banach subalgebra of $L(X)$ generated by all operators of the form $f(\A)$ for $f\in\B^n$ and all their resolvents.  
The spectrum of $f(\A)$ in $\Af$ coincides with the spectrum in $L(X)$, and it will be denoted by $\sigma(f(\A))$.   
For each $j \in I_n$, the Gelfand spectrum of $\Af$ splits into two subsets $\Mf_j$ and $\Nf_j$ and
there is a continuous function $\eta_j$ on $\Mf_j$ such that, for all $z\in\C_+$, 
\[
\chi((A_j+z)^{-1}) = \begin{cases}(\eta_j(\chi)+z)^{-1}   &\text{if $\chi\in\Mf_j$}, \\ 
0 &\text{if $\chi\in\Nf_j$}. \end{cases}
\]
Then $\sigma(A_j) = \eta_j(\Mf_j)$.    The \emph{Shilov joint spectrum} $\sigma(\A)$ of $\A$ is defined to be
\[
\sigma(\A) = \left\{ (\eta_1(\chi),\dots,\eta_n(\chi)) : \chi \in \cap_{j\in I_n} \Mf_j \right\}.
\]

We have the following spectral inclusion theorem.

\begin{thm}  \label{6.4.1}
Let $\A$ be an $n$-tuple of commuting operators which satisfy the full {\rm(GSF)} condition, 
and let $f \in \B^n$ and $\lb\in\sigma(\A)$.   Then $f(\lb) \in \sigma(f(\A))$.
\end{thm}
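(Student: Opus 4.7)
The plan is to realise the point $\lb\in\sigma(\A)$ through a character $\chi$ of the commutative Banach algebra $\Af$ and then to establish the identity
\[
\chi(f(\A)) = f(\lb), \qquad f \in \B^n.
\]
Once this holds, Gelfand theory gives $f(\lb) \in \sigma_{\Af}(f(\A))$, which coincides with $\sigma(f(\A))$ by the set-up. By the very definition of the Shilov joint spectrum, a character $\chi \in \bigcap_{j\in I_n} \Mf_j$ with $\l_j = \eta_j(\chi)$ is available, and for this $\chi$ one has $\chi((A_j+z)^{-1}) = (\l_j+z)^{-1}$ for every $z \in \C_+$.

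To verify the identity, I would first pass from $f\in\B^n$ to $g := f\r_\1 \in \B^n_0$, exploiting that $\B^n_0$ is an ideal of $\B^n$ and that $\Phi_\A$ is multiplicative. Lemma~\ref{l2n} applied to the semi-homomorphism $\Phi_\A$ then yields the norm-convergent formula
\[
f(\A)\,\r_\1(\A)^3 = \Phi_\A(g)\,\r_\1(\A)^2 = \lim_{t\to 0} Q_t(g;\A),
\]
where $Q_t(g;\A)$ is itself an $L(X)$-valued Bochner integral over $\C_+^n$. Applying the continuous character $\chi$ turns the left side into $\chi(f(\A))\,\r_\1(\lb)^3$ (via multiplicativity of $\chi$ together with $\chi(\r_\1(\A)) = \r_\1(\lb)$), while on the right I would pass $\chi$ under the Bochner integral and substitute $\chi((A_j+t+\ov{\nu_j})^{-2}) = (\l_j+t+\ov{\nu_j})^{-2}$. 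The product of these resolvent squares combines with the $(-2/\pi)^n$ prefactor of $Q_t$ to reconstitute the kernel $\K_n(\lb+\t,\ov{\nub})$, with $\t=(t,\dots,t)$, giving
\[
\chi(Q_t(g;\A)) = \int_{\C_+^n} \K_n(\lb+\t,\ov{\nub})\,\bigl(\D_n(f\r_\1^3)\bigr)(\nub)\,dV_n(\nub) = (f\r_\1^3)(\lb+\t),
\]
the final equality being Corollary~\ref{CorR} evaluated at the point $\lb+\t\in\C_+^n$.

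As $t\to 0$, the uniform continuity of functions in $\B^n$ on $\ov{\C}_+^n$ (Proposition~\ref{3.2.4}) shows that the right-hand side converges to $f(\lb)\,\r_\1(\lb)^3$. Since $\lb\in\ov{\C}_+^n$ forces $\r_\1(\lb) = \prod_j(\l_j+1)^{-1}\ne 0$, the identity $\chi(f(\A))\,\r_\1(\lb)^3 = f(\lb)\,\r_\1(\lb)^3$ may be divided through to conclude $\chi(f(\A)) = f(\lb)$, as required.

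The principal technical step is the interchange of the character $\chi$ with the operator-norm Bochner integral $Q_t(g;\A)$; this is routine because $\chi$ is automatically a bounded linear functional on $\Af$. Everything else---the behaviour of $\chi$ on resolvents, the reproducing formula from Section~\ref{sect3}, the multiplicativity of $\Phi_\A$, and the extension of $\B^n$-functions to $\ov{\C}_+^n$---is already prepared in the earlier sections.
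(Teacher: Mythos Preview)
Your argument is correct and takes a genuinely different route from the paper's proof. The paper first imposes an extra hypothesis (uniform resolvent bounds for $A_j$ on the left half-plane) to force the integral $\int_{\C_+^n}\K_n(\A,\ov\nub)\r_\1(\A)^2(\D_n f)(\nub)\,dV_n(\nub)$ to converge in operator norm, applies $\chi$ to that, and then removes the extra hypothesis by replacing $\A$ with $\A+\bm\ep$ and letting $\ep\to0$ via Lemmas~\ref{C0} and~\ref{shift3}; it also treats general $f\in\B^n$ through the elementary decomposition rather than by multiplying by $\r_\1$. You instead recycle the $Q_t$-machinery of Lemma~\ref{l2n}: the Bochner integral $Q_t(g;\A)$ is already norm-convergent in $\Af$ (its integrand is a scalar multiple of $\r_{\t+\ov\nub}(\A)^2\in\Af$), so $\chi$ passes under it without any auxiliary resolvent hypothesis, and the shift by $\t$ and limit $t\to0$ play exactly the role of the paper's $\ep$-shift. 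Your reduction from $\B^n$ to $\B_0^n$ by multiplying with $\r_\1$ is also cleaner than invoking the elementary decomposition. The trade-off is that the paper's argument is more self-contained at this point (it uses only the reproducing formula), whereas yours leans on the full construction in Section~\ref{sect5}; but since that construction is already in place, your approach is the more economical one.
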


\begin{proof}
First we will assume that the resolvents of $A_j$ are bounded on the left half-plane.
It follows from this assumption that there exists $C$ such that, for all $z\in\C_+$,
\begin{equation} \label{res2}
\|(A_j+z)^{-2}(A_j+1)^{-2}\| \le C (1 + |z-1|)^{-2}, \quad j\in I_n.
\end{equation}

Let $f \in \B_0^n$.   Then \eqref{def2} and \eqref{res2} can be used to obtain
\begin{equation} \label{fr2}
f(\A) r_\1(\A)^2 =  \int_{\C_{+}^n}  \mathcal{K}_n(\A,\ov{\nub}) r_\1(\A)^2 \,(\D_n f)(\nub)\, dV_n(\nub),
\end{equation}
where the integral converges in operator norm.   

Let $\lb\in\sigma(\A)$.  There is a character $\chi$ of $\Af$ such that $\chi \in \Mf_j$ and $\chi((A_j+1)^{-1}) = (\l_j+1)^{-1}$ for all $j \in I_n$.   
Then $\chi(r_{\1}(\A)) = r_\1(\lb)$ and $\chi(\mathcal{K}_n(\A,\nub)) = \mathcal{K}_n(\lb,\nub)$.  
Applying $\chi$ to \eqref{fr2}, we obtain
\[
\chi(f(\A)) r_\1(\lb)^2 =  \int_{\C_{+}^n}  \mathcal{K}_n(\lb,\ov{\nub}) r_\1(\lb)^2 \,(\D_n f)(\nub)\, dV_n(\nub)
= f(\lb) r_\1(\lb)^2.
\]
Thus $f(\lb) = \chi(f(\A))$.

For a general $f \in \B^n$, one may consider the terms $f_{\O,0}$ in the elementary decomposition of $f$, 
using the same character $\chi$, adapting the formulas above accordingly to show that 
$f_{\O,0}(\lb_\O) = \chi(f_{\O,0}(\A_\O))$, and then summing over $\O\in\P_n$ to deduce that $f(\lb) = \chi(f(\A))$.

Now we cease to assume that the resolvents of $A_j$ are bounded on the left half-plane.  
 Instead, we can consider the operators $A_j+\ep$, where $\ep>0$, and apply the result above.  
The same $\chi$ as above may be used for sufficently small $\ep>0$ to show that 
$f(\lb+\bm\ep) = \chi(f(\A+\bm\ep))$, where $\bm\ep = (\ep,\dots,\ep)$.  
 Letting $\ep\to0$ and using Lemmas \ref{C0} and \ref{shift3}, it follows that $f(\lb) = \chi(f(\A)) \in \sigma(f(\A))$.
\end{proof}

There are other results on spectral mappings for the $\B^n$-calculus, as follows.

\begin{prop} \label{sap}
Let $\A$ be an $n$-tuple of commuting operators on a Banach space $X$, satisfying the full {\rm(GSF)} condition, and let $f \in \B^n$.   The following results hold.
\begin{enumerate} [\rm1.]
\item  If $x\in X$ is an eigenvector of $A_j$ with eigenvalue $\l_j$ for each $j\in I_n$, then $x$ is an eigenvector for $f(\A)$ with eigenvalue $f(\lb)$.
\item If $(x_k)_{k\in\N}$ are unit vectors in $X$ forming an approximate eigenvector of $A_j$ with approximate eigenvalue $\l_j$ for each $j\in I_n$, then $f(\lb)$ is an approximate eigenvalue of $f(\A)$.
\end{enumerate}
\end{prop}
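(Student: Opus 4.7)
Both parts will rest on the resolvent identity
\[
(A_j+z)^{-1}x_n - (\l_j+z)^{-1}x_n = -(A_j+z)^{-1}(\l_j+z)^{-1}(A_j-\l_j)x_n, \qquad z\in\C_+,
\]
which vanishes in case~(1) and tends to $0$ in $X$-norm in case~(2). Telescoping and the commutativity of the resolvents will propagate this at once to $\r_\nub(\A)$, to $\r_\nub^2(\A)$, to $\K_n(\A,\ov\nub)$, and to $\r_\1(\A)^k$ for every $k\in\N$.

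For~(1), I will apply the reproducing formula of Corollary~\ref{CorR} to each elementary component $f_{\O,0}$ of $f$, viewed as a function in $\B_0^{|\O|}$ of $\z_\O$. The identity $\K_{|\O|}(\A_\O,\ov\nub)x = \K_{|\O|}(\lb_\O,\ov\nub)x$ together with the scalar reproducing formula
\[
f_{\O,0}(\lb_\O)=\int_{\C_+^{|\O|}}\K_{|\O|}(\lb_\O,\ov\nub)(\D_\O f_{\O,0})(\nub)\,dV_{|\O|}(\nub)
\]
will collapse the vector formula to $f_{\O,0}(\A_\O)x=f_{\O,0}(\lb_\O)x$. The displayed scalar identity is given by Corollary~\ref{CorR} for $\lb_\O\in\C_+^{|\O|}$ and extends to $\lb_\O\in\ov\C_+^{|\O|}$ by dominated convergence against the $dV_{|\O|}$-integrable majorant $\prod_{j\in\O}|\l_j+\ov\nu_j|^{-2}|(\D_\O f_{\O,0})(\nub)|$. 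Summing over $\O\in\P_n$ and invoking $\sum_\O f_{\O,0}(\lb_\O)=f(\lb)$ delivers $f(\A)x=f(\lb)x$.

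For~(2), I will set $g:=f\r_\1^2\in\B_0^n$ (by the ideal property) and invoke Lemma~\ref{l2n} to write $g(\A)\r_\1(\A)^2=\lim_{t\to 0}Q_t(g;\A)$ in operator norm, where $Q_t(g;\A)$ is an $L(X)$-valued Bochner integral of $\prod_j(A_j+t+\ov\nu_j)^{-2}$ against the scalar density $(-2/\pi)^n(\D_n(g\r_\1^2))(\nub)\,dV_n(\nub)$. For each fixed $t>0$, the pointwise convergence $\|[\prod_j(A_j+t+\ov\nu_j)^{-2}-\prod_j(\l_j+t+\ov\nu_j)^{-2}I]x_n\|\to 0$, combined with an $n$-uniform integrable majorant (the operator-norm integrand supplied by Lemma~\ref{l2n} plus the directly estimable scalar $\prod_j|\l_j+t+\ov\nu_j|^{-2}$), will yield $\|Q_t(g;\A)x_n-Q_t(g;\lb)x_n\|\to 0$ as $n\to\infty$. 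A three-$\ep$ argument in $t\to 0$ then produces $f(\A)\r_\1(\A)^4 x_n - f(\lb)\r_\1(\lb)^4 x_n\to 0$, and combining with $\r_\1(\A)^4 x_n-\r_\1(\lb)^4 x_n\to 0$ from the opening paragraph gives $[f(\A)-f(\lb)I]\r_\1(\A)^4 x_n\to 0$. Finally, since $\|\r_\1(\A)^4 x_n\|\to\prod_j|\l_j+1|^{-4}>0$, the normalized vectors $y_n:=\r_\1(\A)^4 x_n/\|\r_\1(\A)^4 x_n\|$ are unit vectors for large $n$ with $[f(\A)-f(\lb)I]y_n\to 0$, exhibiting $f(\lb)$ as an approximate eigenvalue of $f(\A)$. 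The main technical hurdle will be the dominated-convergence passage inside the Bochner integral defining $Q_t(g;\A)$: the operator-norm integrable majorant implicit in Lemma~\ref{l2n} must be shown to dominate the $X$-valued integrand acting on $x_n$ uniformly in $n$, which is automatic from $\|x_n\|=1$.
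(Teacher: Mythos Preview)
Your proof of part~(1) is correct and essentially the same as the paper's, which simply says the result follows directly from the defining formula~\eqref{OperA1}; you have spelled out the details, including the extension of the scalar reproducing formula to $\lb_\O\in\ov\C_+^{|\O|}$. That extension is indeed justified: approaching $\lb_\O$ horizontally from inside $\C_+^{|\O|}$, the boundary integrand $\prod_{j\in\O}|\l_j+\ov\nu_j|^{-2}|(\D_\O f_{\O,0})(\nub)|$ dominates the approximating integrands, and one checks it is $dV_{|\O|}$-integrable by the same Poisson-kernel computation as in the proof of Corollary~\ref{CorR}.

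For part~(2), your approach is correct but genuinely different from the paper's. The paper passes to the $F$-product $\mathcal{X}_T=\ell^\infty_T(X)/c_0(X)$, on which the approximate eigenvector sequence becomes a genuine eigenvector of the induced tuple $\mathcal{Z}$; it then applies part~(1) in $\mathcal{X}_T$ and pulls back. Your argument stays in $X$ and uses the $Q_t$-representation of Lemma~\ref{l2n} together with dominated convergence: the Bochner-integrable majorant from Lemma~\ref{l1n} (which is uniform in the sequence index since $\|x_k\|=1$) lets you pass the limit $k\to\infty$ through the integral for each fixed $t>0$, and then the operator-norm convergence $Q_t(g;\A)\to f(\A)\r_\1(\A)^4$ together with the scalar analogue $Q_t(g;\lb)\to f(\lb)\r_\1(\lb)^4$ (which follows from Lemma~\ref{tflip} and the continuous extension of $f$ to $\ov\C_+^n$) closes the three-$\ep$ loop. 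The $F$-product route is shorter and more structural once that machinery is in place, but requires verifying that the induced maps on $\mathcal{X}_T$ form a $\B^n$-calculus for $\mathcal{Z}$; your route is more self-contained within the paper's own framework and avoids that check at the cost of a longer direct computation. One small notational slip: you have reused $n$ for the sequence index, clashing with the number of variables---the statement uses $k$.
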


\begin{proof}
The first statement can be seen very easily from the definition of  $f(A)$ in (\ref{OperA1}).

The second statement follows from the first, using an $F$-product as in \cite{Nag}.   For this, let
\begin{align*} 
\ell^\infty_T(X) &= \Big\{(x_k)_{k\in\N}:  \sup_{k\in\N} \|x_k\|< \infty,  \; \lim_{t\to0} \sum_{j=1}^n \sup_{k\in\N} \left\|e^{-tA_j}x_k - x_k\right\|=0\Big\}, \\
c_0(X) &= \big\{(x_k)_{k\in\N}:  \lim_{k\to\infty} \|x_k\| =0\big\}.
\end{align*}
Then $c_0(X)$ is a closed subspace of $\ell^\infty_T(X)$.   Let $\mathcal{X}_T  = \ell^\infty_T(X)/c_0(X)$, and $\pi: \ell^\infty_T(X) \to \mathcal{X}_T$ be the quotient map.   
For each $j\in I_n$, the $C_0$-semigroup $(e^{-tA_j})_{t\ge0}$ induces a $C_0$-semigroup on $\mathcal{X}_T$.  Let $-Z_j$ be the generator, and $\mathcal{Z} = (Z_1,\dots,Z_n$). 
The operators $f(\mathcal{Z})$ on $X$ induce operators on $\mathcal{X}_T$ which form a $\B^n$-calculus for $\mathcal{Z}$.  

There are standard methods to show that (a) an approximate eigenvector $(x_k)_{k\in\N}$ for $\A$ is mapped by $\pi$ to an approximate eigenvector for $\mathcal{Z}$ with eigenvalues $\lb$, and (b) $\lb$ is an eigenvalue for $\mathcal{Z}$. By the first statement (1), $f(\lb)$ is an eigenvalue of $f(\mathcal{Z})$.  It then follows directly that $\lb$ is an approximate eigenvalue of $f(\A)$.    Details for (a) and (b) in the cases $n=1$ and $n=2$ may be found in \cite[pp.\ 20, 78]{Nag} and \cite[Propositions 2.3.17, 6.4.4]{Kob}.
\end{proof}

If all the operators $A_j$ are sectorial of angle less than $\pi/2$, then $\A$ satisfies the full (GSF) condition and the formula \eqref{def2} can be adapted to
\begin{equation} \label{fsec}
f(\A)  =  \int_{\C_{+}^n}  \mathcal{K}_n(\A,\ov{\nub}) \,(\D_n f)(\nub)\, dV_n(\nub),
\end{equation}
where the integral is convergent in operator norm.  

\begin{thm} \label{6.4.3}
Let $\A$ be an $n$-tuple of commuting sectorial operators of angle less than $\pi/2$ on a Banach space $X$, and let $f \in \B^n$.  Then
\[
\sigma(f(\A)) \subseteq \bigcup_{\O\in\P_n} f_{\O} (\sigma(\A)).
\]
\end{thm}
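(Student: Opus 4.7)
The plan is to combine Gelfand theory on the commutative Banach algebra $\Af$ with the operator-norm absolutely convergent integral representation \eqref{fsec}, which is available precisely because each $A_j$ is sectorial of angle strictly less than $\pi/2$. Given $\mu \in \sigma(f(\A))$, the Gelfand transform yields a character $\chi$ of $\Af$ with $\chi(f(\A)) = \mu$. For each $j \in I_n$, either $\chi \in \Mf_j$, in which case we set $\l_j := \eta_j(\chi) \in \sigma(A_j)$, or $\chi \in \Nf_j$, so that $\chi((A_j+z)^{-1}) = 0$ for every $z \in \C_+$. Let $\O := \{j \in I_n : \chi \in \Mf_j\}$, so the index set splits cleanly into ``finite'' and ``infinite'' coordinates for $\chi$.

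Next I would apply $\chi$ termwise to the elementary decomposition $f(\A) = \sum_{\Psi \in \P_n} f_{\Psi,0}(\A_\Psi)$ of Proposition \ref{p2.4}, exploiting that, by sectoriality, each $f_{\Psi,0}(\A_\Psi)$ is given by the operator-norm convergent analogue of \eqref{fsec}, so $\chi$ may be pulled inside the integral. The $\Psi$-integrand carries the scalar $\prod_{j \in \Psi} \chi((A_j + \ov\l_j)^{-2})$. If $\Psi \not\subseteq \O$, picking any $j \in \Psi \cap \O^c$ shows that this factor vanishes, so $\chi(f_{\Psi,0}(\A_\Psi)) = 0$. If $\Psi \subseteq \O$, every factor equals $(\l_j + \ov\l_j)^{-2}$, producing $\K_{|\Psi|}(\lb_\Psi, \ov\lb)$ under the integral, and Corollary \ref{CorR} applied to the elementary function $f_{\Psi,0}\in\B^{|\Psi|}_0$ gives $\chi(f_{\Psi,0}(\A_\Psi)) = f_{\Psi,0}(\lb_\Psi)$. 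Summing,
\[
\chi(f(\A)) = \sum_{\Psi \subseteq \O} f_{\Psi,0}(\lb_\Psi) = f_\O(\lb_\O),
\]
where the last equality holds because $f_\O$ is obtained from $f$ by sending $z_j \to \infty$ for $j \in \O^c$ (Proposition \ref{3.2.2}), an operation which annihilates precisely the elementary pieces $f_{\Psi,0}$ with $\Psi \not\subseteq \O$.

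The main obstacle of the proof is to realise the partial spectral data $\lb_\O = (\l_j)_{j \in \O}$ as the $\O$-projection of some genuine joint spectral point $\tilde\lb \in \sigma(\A)$, so that $\mu = f_\O(\lb_\O) \in f_\O(\sigma(\A))$. Equivalently, one must extend the restriction $\chi|_{\Af_\O}$ to a character of $\Af$ lying in $\bigcap_{j \in I_n} \Mf_j$. I would handle this by an $\ep$-shift/perturbation argument: for $\bm\ep = (\ep,\dots,\ep)$ with $\ep>0$, the tuple $\A + \bm\ep$ has each resolvent bounded on a neighbourhood of $\ov\C_+$ thanks to the strict sectoriality, which forces $\Nf_j = \emptyset$ for the shifted algebra and places us in the bounded-left-half-plane setting used in the proof of Theorem \ref{6.4.1}; there, every character is automatically in $\bigcap_j \Mf_j$, so $(\l_j + \ep)_{j \in \O}$ extends to a full point of $\sigma(\A+\bm\ep)$. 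Letting $\ep \to 0$, Lemma \ref{C0} and Lemma \ref{shift3} give $f_\O(\A_\O + \bm\ep_\O) \to f_\O(\A_\O)$ in operator norm, so an upper-semicontinuity argument for the spectrum transfers the extension to a full point of $\sigma(\A)$ whose $\O$-projection is $\lb_\O$. This yields $\mu \in f_\O(\sigma(\A))$ and completes the inclusion.
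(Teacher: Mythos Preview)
Your computational core --- picking a character $\chi$ of $\Af$, setting $\O=\{j:\chi\in\Mf_j\}$, pushing $\chi$ through the operator-norm convergent integral \eqref{fsec} for each elementary piece $f_{\Psi,0}(\A_\Psi)$, observing that the pieces with $\Psi\not\subseteq\O$ vanish while the others reproduce $f_{\Psi,0}(\lb_\Psi)$, and summing to $\chi(f(\A))=f_\O(\lb_\O)$ --- is exactly the paper's argument. The paper's proof simply records that $\lb_\O\in\sigma(\A_\O)$ and stops there.

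You go further and declare the realisation of $\lb_\O$ as the $\O$-projection of a point of the full joint spectrum $\sigma(\A)$ to be ``the main obstacle''. The paper does not discuss this at all; so on that point you are attempting more than the paper proves.

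Your proposed $\ep$-shift argument for that obstacle does not work, however. First, the shift does not change the algebra: $(A_j+\ep+z)^{-1}=(A_j+(z+\ep))^{-1}$ already lies in $\Af$, so the Gelfand spectrum and the decomposition $\Mf_j\cup\Nf_j$ are identical before and after the shift. Second, boundedness of the resolvents on a neighbourhood of $\ov\C_+$ does not force $\Nf_j=\emptyset$: for the sectorial (angle $0$) diagonal operator $Ae_k=k\,e_k$ on $c_0$, the ``point at infinity'' character annihilates every $(A+z)^{-1}$, and this persists after any finite shift. Third, the proof of Theorem~\ref{6.4.1} that you invoke runs in the opposite direction --- it begins with $\lb\in\sigma(\A)$, i.e.\ with a character already in $\bigcap_j\Mf_j$ --- and nowhere asserts that all characters lie there. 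Finally, even granting points $\tilde\lb^\ep\in\sigma(\A+\bm\ep)$ with $\O$-projection $\lb_\O+\bm\ep_\O$, the $\O^c$-coordinates need not remain bounded as $\ep\to0$, so upper semicontinuity of the spectrum would not produce a limit in $\sigma(\A)$. In short, the first part of your proposal reproduces the paper's proof, while the last paragraph tries to close a gap the paper leaves open, but by an argument that fails at several points.
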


\begin{proof}
Let $\chi$ be a character of $\Af$, and $\O_\chi = \{j\in I_n:  \chi \in \Mf_j\}$.   
For $j \in \O_\chi$, let $\l_j$ satisfy $\chi((z+A_j)^{-1}) = (z+\l_j)^{-1}$ for all $z\in\C_+$.
 
Let $\O \in \P_n$, and $k= |\O|$.    Assume first that $\O \subseteq \O_\chi$.  Then $\lb_{\O} \in \sigma(\A_{\Omega})$, and $\chi(\K_k(\A_\O,\ov{\nub})) = \K_k(\lb_\O,\ov{\nub})$.  It follows from \eqref{fsec} that
\[
\chi(f_{\O,0}(\A))  = \int_{\C_+^k} \mathcal{K}_k(\lb_\O,\ov{\nub}) \,(\D_k f_{\O,0})(\nub)\, dV_n(\nub) = f_{\O,0}(\lb_\O).
\]
Note that this conclusion is valid even if $\Omega$ is the empty set.   Then $f_{\O,0}$ is a scalar.

If $\O$ is not contained in $\O_\chi$, then, 
for some $j \in \O$, $\chi(r_z(A_j))=0$ for all $z\in\C_+$, so $\chi(\mathcal{K}_k(\A,\ov{\nub}))=0$ and $\chi(f_{\O,0}(\A))=0 = f_{\O,0}(\lb)$.

Summing over all $\O\in\P_n$, we obtain that $\chi(f(\A)) = f_{\O_\chi}(\lb)$.
\end{proof}

\begin{cor}
Let $\A$ be an $n$-tuple of commuting sectorial operators of angle less than $\pi/2$ on a Banach space $X$, and let $f \in \B_0^n$.  Then
\[
\sigma(f(\A)) = f(\sigma(\A)).
\]
\end{cor}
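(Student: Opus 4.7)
The claim is an equality, so I would establish the two inclusions separately, with the bulk of the work already done in Theorems~\ref{6.4.1} and~\ref{6.4.3}.

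For $f(\sigma(\A)) \subseteq \sigma(f(\A))$, I would invoke Theorem~\ref{6.4.1} directly, applied pointwise: for each $\lb \in \sigma(\A)$, we have $f(\lb) \in \sigma(f(\A))$. Note that this inclusion does not even require the sectoriality hypothesis, only the existence of a $\B^n$-calculus guaranteed by Examples~\ref{ex4.1}(2).

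For the reverse inclusion $\sigma(f(\A)) \subseteq f(\sigma(\A))$, I would appeal to Theorem~\ref{6.4.3}, which gives
\[
\sigma(f(\A)) \subseteq \bigcup_{\O \in \P_n} f_\O(\sigma(\A)).
\]
The crucial observation is then the defining property of the ideal $\B_0^n$: by construction, $f\in\B_0^n$ means that the limit functions $f_\O$ vanish identically for every proper subset $\O\subsetneq I_n$. Consequently, for $\O\ne I_n$ the set $f_\O(\sigma(\A))$ reduces to (a subset of) $\{0\}$, while for $\O=I_n$ it equals $f(\sigma(\A))$. The union therefore collapses to $f(\sigma(\A))\cup\{0\}$.

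The main obstacle is then to absorb the possibly spurious value $0$ into $f(\sigma(\A))$. Tracing through the proof of Theorem~\ref{6.4.3}, every character $\chi$ of $\Af$ for which $\chi(f(\A))=0$ arises from $\O_\chi\ne I_n$, i.e., $\chi((A_{j_0}+z)^{-1})=0$ for some index $j_0$; this intuitively corresponds to spectrum of $A_{j_0}$ at infinity in the $j_0$-direction. Since $f\in\B_0^n$ extends continuously to the compactification of $\ov\C_+^n$ with value $0$ on every infinite face, such a $0$ arises precisely as $f$ evaluated at a boundary point of $\sigma(\A)$; the task is to realize it as $f(\lb)$ for an actual $\lb \in \sigma(\A)$, or equivalently to match the $0$ on the character side with a genuine element of $f(\sigma(\A))$. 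I expect this is the one place where sectoriality of angle less than $\pi/2$ is used in earnest: it forces the relevant part of $\sigma(A_{j_0})$ to extend to arbitrarily large real parts whenever an \textquotedblleft infinity\textquotedblright\ character exists, so the vanishing of $f\in\B_0^n$ at infinity produces $0$ as a limit (and in fact an attained value, after the standard compactness argument on the Gelfand spectrum of $\Af$) of $f$ on $\sigma(\A)$. Combining both inclusions gives the claimed equality.
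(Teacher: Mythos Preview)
Your overall plan matches the paper's: Theorem~\ref{6.4.1} gives $f(\sigma(\A))\subseteq\sigma(f(\A))$, and the machinery of Theorem~\ref{6.4.3} is used for the reverse inclusion. The paper is terser on the second step: rather than applying the \emph{statement} of Theorem~\ref{6.4.3} and then confronting a residual $\{0\}$, it returns to the \emph{proof} of Theorem~\ref{6.4.3}, notes that $f_{\O,0}=0$ for every $\O\ne I_n$ when $f\in\B_0^n$, and asserts directly that each character value $\chi(f(\A))$ equals $f(\lb)$ for some $\lb\in\sigma(\A)$.

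You actually go further than the paper by isolating the case of characters $\chi$ with $\O_\chi\ne I_n$, for which the proof of Theorem~\ref{6.4.3} yields $\chi(f(\A))=0$, and asking whether $0\in f(\sigma(\A))$. Your proposed resolution of this point, however, has a genuine gap. The chain of reasoning --- an infinity character forces $A_{j_0}$ to be unbounded, sectoriality of angle less than $\pi/2$ then forces $\sigma(A_{j_0})$ to contain points of arbitrarily large real part, so $f$ takes values near $0$ on $\sigma(\A)$, and a ``standard compactness argument on the Gelfand spectrum'' makes $0$ an attained value --- delivers at most $0\in\overline{f(\sigma(\A))}$. Compactness of the Gelfand spectrum of $\Af$ makes $\sigma(f(\A))$ compact, not $f(\sigma(\A))$; there is no general mechanism promoting a limit point of $f(\sigma(\A))$ to a member. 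Already for $n=1$, $A$ unbounded sectorial, and $f=r_1$, one has $0\in\sigma((1+A)^{-1})$ while $f(\sigma(A))=\{(1+\l)^{-1}:\l\in\sigma(A)\}$ never contains $0$, so this step cannot be completed along the lines you sketch. The paper does not linger on this case either; it simply invokes the proof of Theorem~\ref{6.4.3} and moves on.
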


\begin{proof}
Theorem \ref{6.4.1} shows that $f(\sigma(\A)) \subseteq \sigma(f(\A))$.   For the reverse inclusion, $f_{\O,0} = 0$ unless $\O=\P_n$.  Then the proof of Theorem \ref{6.4.3} shows that $\chi(f(\A))=f(\lb)$ for some $\lb\in\sigma(\A)$, as required.
\end{proof}

\section{Estimates for functions and operators}  \label{sect8}
Let $\mathcal A=(A_1, \dots, A_n)$ be an $n$-tuple on commuting operators satisfying
the full (GSF) condition. In this section, we will use the $\mathcal B^n$-calculus constructed in Theorem \ref{BAC2}
to provide several  estimates for $f(\mathcal A)$ in terms of $\|f\|_{\infty}$
for $f$ belonging to substantial subclasses of $H^\infty(\mathbb C^n_+)$.
The estimates resemble the situation of a bounded $H^{\infty}$-calculus and thus could be potentially useful.

We will need a Fourier characterisation of functions in $H^\infty(\mathbb C^n_+)$
in terms of their boundary values.
If $f \in H^\infty(\mathbb C^n_+)$, then $f^b(\bb) := \lim_{{\bm \alpha} \to 0+} f({\bm \alpha}+i{\bm \beta})$ exists for almost all ${\bm \beta} \in \mathbb R^n$ (see \cite[Theorem F]{Qian}). 
Then $f^b \in L^\infty(\mathbb R^n_+)$.
For $f \in L^\infty (\mathbb R^n_+),$ define the spectrum of $f$ as
${\rm sp}(f):={\rm supp} \, (\mathcal F^{-1}f),$ where $\mathcal F$ stands for the
(distributional) Fourier transform
of $f$.  
Let $H^\infty(\mathbb R^n):=\{f \in L^\infty(\mathbb R^n): {\rm sp}\, (f) \subseteq \mathbb R^n_+\}$.
Then $H^{\infty}(\mathbb C^n_+)$ is isometrically isomorphic to $H^\infty(\mathbb R^n)$ via the mapping 
$f \to f^b$,
and the inverse mapping is given by
\begin{equation}\label{poisson}
 f({\bm \alpha}+i{\bm \beta})=(P_{{\bm \a}}*f^b)({\bm \b})=\int_{\mathbb R^n} P({\bm \alpha},{\bm \beta -\t}) f^b(\mathbf{t})\, d\mathbf{t},
\end{equation}
where ${\bm \alpha} \in \mathbb R^n_+, \, {\bm \beta} \in \mathbb R^n$, and
\[P_{{\bm \a}}({\bm \b})=P({\bm \alpha},{\bm \beta}) =  \prod_{j=1}^n \frac{1}{\pi} \frac{\a_j}{\alpha_j^2+ \beta_j^2}
\] 
stands for the Poisson kernel.
This fact is well-known for $n=1$ (see \cite[Section II.1.5]{Havin}).
For $n >1$ it follows from \cite[Theorem 2.7]{Qian}. 
 
For $\tau>0$, let  $H^\infty([\tau, \infty)^n):=\{f \in H^\infty(\mathbb C_+^n): {\rm sp}\, (f^b) \subseteq 
[\tau,\infty)^n\}$,
and 
\begin{equation*}
e_\tau(\mathbf{z})=e^{-\tau(z_1+\dots+z_n)}, \qquad \mathbf{z} \in \mathbb C_+^n.
\end{equation*}
Then
\begin{equation} \label{exp}
H^{\infty}([\tau, \infty)^n)=e_{\tau} H^\infty(\mathbb C^n_+),
\end{equation}
so that from \eqref{poisson}, for $f \in H^\infty([\tau, \infty]^n)$,
\begin{equation}\label{bound}
|f(\z)| \le e^{-\tau (\sum_{i=1}^n {\rm Re}\, z_i)}\|f^b\|_{\infty},  \qquad \z\in\C_+^n. 
\end{equation}
Indeed, $(e_{-\tau} f)^b \in L^\infty(\mathbb R^n)$ and
${\rm sp}\, (e_{-\tau}f) \subseteq \mathbb R^n_+$.   If $g({\bm \alpha}+i{\bm \beta})=(P_{{\bm \a}}*(e_{-\tau}f)^b)({\bm \b}),$ ${\bm \alpha}+i{\bm \beta} \in \mathbb C^n_+$,
 then by the above $g \in H^\infty(\mathbb C_+^n)$ and  
 $(e_{\tau}g)^b = f^b$.  Since both $f$ and $e_{\tau}g$ are in $H^\infty(\mathbb C^n_+)$
we infer from \eqref{poisson} that $e_{\tau}(\mathbf{z}) g(\mathbf{z})=f(\mathbf{z})$
for all $\mathbf{z} \in \mathbb C^n_+$.  Taking into account \eqref{poisson}, we obtain \eqref{bound}
which is well-known for $n=1$.
It follows from a standard Fourier characterisation of $H^\infty(\mathbb C_+)$
and it was used essentially in \cite{BGT}, but
its multivariate counterpart seems not to have been noted in the literature.

We will use an elementary variant of Bernstein's inequality, as follows (see \cite[Theorem 7.3.1]{Horm} or \cite[Chapter 3, p.116]{Nik}).  If $f \in H^\infty([0, \sigma]^n)$
then $f$ is an entire function of exponential type $(\sigma, \dots, \sigma)$, as defined in \cite[p.98]{Nik} and 
for all $\mathbf{z}={\bm \alpha}+i{\bm \beta} \in \mathbb C^n_+$ and $\Omega \in \mathcal P_n$,
\begin{equation}\label{Des1}
\sup_{{\bm \beta} \in \mathbb R^n}|\mathcal D_{\Omega} f({\bm \alpha}+i{\bm \beta})|\le
\sigma^{|\Omega|} \sup_{{\bm \beta} \in \mathbb R^n}\,|f({\bm \alpha}+i{\bm \beta})|.
\end{equation}

In the following lemmas, we show that functions of various classes belong to $\B_0^n$, and we give estimates for their norms.  Then Proposition \ref{Pr1} shows that $f\in\B^n$ and $\|f\|_{\B^n} \le 2^n\|f\|_{\B_0^n}$.   An alternative estimate of the $\B^n$-norms can be obtained by an extension of the method used for the $\B_0^n$-norms.   

\subsection*{Holomorphic extensions to the left}
We will show here that if  
$f\in \mathcal B^n$ extends to a bounded analytic function in a larger half-space, then some damped versions of $f$ 
are in $\mathcal B_0^n$ with norm dominated by an $H^\infty$-norm of $f$.   For $n=1$, functions of this type were considered in \cite[Section 5.2]{BGT}.

Let $\omega>0$ and 
$H^\infty_\omega=H^{\infty}(\{\mathbf{z}\in \mathbb C^n: \Re z_j >-\omega, \, j\in I_n\})$.   If $f\in H^\infty_\omega$ and  $\O\in\P_n$, then Cauchy's inequality gives
\begin{equation}\label{Cau}
|\mathcal{D}_\Omega f(\mathbf{z})|\le \frac{\|g\|_{H^\infty_\omega}}{2^k
|\omega+{\bm \alpha}_\Omega|},  \qquad \mathbf{z}={\bm \alpha}+i{\bm \beta} \in \mathbb C^n_+,
\end{equation}
where
$k=|\O|$ and
$|\omega+{\bm \alpha}_\Omega|=\prod_{j\in\O}
(\omega+\a_{j})$.

\begin{lemma}\label{AN}
Let $f\in H^\infty_\omega$, $\omega>0$ and let
$\nu>0$, $\lambda\in \C_{+}$, $m= \min\{\omega,\Re\l\}$.   
Let
\begin{equation} \label{RSl}
R^\nu_{\lambda}(\mathbf{z}):=\prod_{j=1}^n (\lambda+z_j)^{-\nu}, \qquad 
S^\nu_{\lambda}(\mathbf{z}):=(\lambda+z_1+\cdots+z_n)^{-\nu}, \quad \mathbf{z} \in \mathbb C_+^n.
\end{equation}
Then $R^\nu_{\lambda}f \in \B^n_0$ and $S^\nu_{\lambda}f \in \mathcal B^n_0$.   Moreover, 
\begin{align*}\label{LR}
\|R^\nu_{\lambda}f\|_{\mathcal{B}_0^n}\le
\|f\|_{H^\infty_\omega}
\left(\frac{1}{2\nu}+\frac{1}{m^\nu}\right)^n, \quad
%&\|R^\nu_{\lambda}f\|_{\mathcal{B}^n}\le
%\|f\|_{H^\infty_\omega}
%\left(\frac{1}{2\nu}+\frac{1}{m}+1\right)^n,  \\
\|S^\nu_{\lambda}f\|_{\mathcal{B}_0^n}\le
 \frac{\|f\|_{H^\infty_\omega}}{m^\nu}
\left(\frac{n}{2\nu}+1\right)^n.% \quad 
%&\|S^\nu_{\lambda}f\|_{\mathcal{B}^n}\le
% \frac{\|f\|_{H^\infty_\omega}}{m^\nu}
%\left(\frac{n}{2\nu}+2\right)^n.
\end{align*}
\end{lemma}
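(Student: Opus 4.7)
The plan is to observe that both $R^\nu_\lambda f$ and $S^\nu_\lambda f$ lie in $H^\infty(\C_+^n)$ and vanish as $\Re z_j\to\infty$ for any single $j$ (the rational prefactor alone forces this decay), so by Proposition~\ref{Pr1} it suffices to estimate $\|\cdot\|_{\B^n_0}=\int_{\R_+^n}\sup_{\bm\beta}|\D_n(\cdot)|\,d\bm\alpha$. In both cases one applies the Leibniz rule to $\D_n$, obtaining a sum over $\Psi\subseteq I_n$ according to whether each $\partial_j$ hits the rational factor or $f$, and bounds $\D_{I_n\setminus\Psi}f$ by Cauchy's inequality applied on the enlarged polystrip $\{\Re z_j>-\omega\}$, yielding the $\bm\beta$-independent estimate $|\D_{I_n\setminus\Psi}f(\z)|\le 2^{-(n-|\Psi|)}\|f\|_{H^\infty_\omega}\prod_{j\notin\Psi}(\omega+\alpha_j)^{-1}$.

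For $R^\nu_\lambda f$ the rational factor separates across the $z_j$, so after taking suprema the Leibniz majorant factors over $j$. Integration in $\bm\alpha$ then factors as well: for $j\in\Psi$, $\int_0^\infty\nu(\Re\lambda+\alpha)^{-\nu-1}\,d\alpha=(\Re\lambda)^{-\nu}\le m^{-\nu}$; for $j\notin\Psi$, $\int_0^\infty\tfrac{1}{2}(\Re\lambda+\alpha)^{-\nu}(\omega+\alpha)^{-1}\,d\alpha\le\int_0^\infty\tfrac{1}{2}(m+\alpha)^{-\nu-1}\,d\alpha=(2\nu m^\nu)^{-1}$. Summing over $\Psi$ via the binomial theorem then yields the claimed estimate.

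For $S^\nu_\lambda f$ the derivatives $\D_\Psi(\lambda+\sum z_j)^{-\nu}=(-1)^{|\Psi|}\tfrac{\Gamma(\nu+|\Psi|)}{\Gamma(\nu)}(\lambda+\sum z_j)^{-\nu-|\Psi|}$ entangle all the variables, and the plan is to factorise in two stages. First, iteratively integrating $(\Re\lambda+\sum\alpha_k)^{-\nu-|\Psi|}$ over $\alpha_j$ for $j\in\Psi$ generates the reciprocal combinatorial factor $\Gamma(\nu)/\Gamma(\nu+|\Psi|)$, reducing matters to
\[
K_p:=\int_{\R_+^p}(\Re\lambda+\textstyle\sum\alpha_k)^{-\nu}\prod_{k=1}^p(\omega+\alpha_k)^{-1}\,d\bm\alpha \qquad (p=n-|\Psi|).
\]
Second, AM--GM applied to $\{\Re\lambda+\alpha_k\}_{k=1}^p$ gives the inequality $(\Re\lambda+\sum\alpha_k)^{-\nu}\le\prod(\Re\lambda+\alpha_k)^{-\nu/p}$, after which the integral factors, each factor is dominated by $\int_0^\infty(m+\alpha)^{-\nu/p-1}d\alpha=p/(\nu m^{\nu/p})$, and hence $K_p\le p^p/(\nu^p m^\nu)$. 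Combining with the Leibniz coefficient $2^{-(n-|\Psi|)}$, using $p\le n$, and summing $\binom{n}{|\Psi|}$ by the binomial theorem delivers $(1+n/(2\nu))^n/m^\nu$.

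The main non-routine ingredient is the AM--GM step in the $S$ case: it restores factorisation of the integrand precisely after the coupling produced by $\D_\Psi S^\nu_\lambda$ has been neutralised by the first-stage integration, and the interplay between the combinatorial factor $\Gamma(\nu+|\Psi|)/\Gamma(\nu)$ and the one-dimensional integrals $(\Re\lambda+\sum\alpha_k)^{-\nu-|\Psi|}$ is exactly what makes the final bound polynomial in $n/\nu$ rather than containing ugly $\Gamma$-factors. The $R$-case is comparatively straightforward because separability holds from the outset.
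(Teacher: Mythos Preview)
Your approach is essentially identical to the paper's: Leibniz rule on $\D_n$, Cauchy's inequality for $\D_{\Psi^c}f$ on the enlarged half-plane $\{\Re z_j>-\omega\}$, then coordinate-wise integration. For $S^\nu_\lambda f$ your argument matches the paper almost verbatim---the paper also integrates out the $\Psi$-variables first to cancel the Pochhammer factor $\Gamma(\nu+|\Psi|)/\Gamma(\nu)$, and then uses the same inequality $\prod_k(1+t_k)\le(1+\sum_k t_k)^{p}$ (after rescaling $\alpha_k\to m\alpha_k$) to factorize the residual integral and arrive at $(n/(2\nu)+1)^n/m^\nu$.

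For $R^\nu_\lambda f$ your final arithmetic does not land on the stated constant. With per-coordinate contributions $m^{-\nu}$ for $j\in\Psi$ and $(2\nu m^\nu)^{-1}$ for $j\notin\Psi$, the binomial sum is
\[
\Bigl(\frac{1}{m^{\nu}}+\frac{1}{2\nu m^{\nu}}\Bigr)^n=\frac{1}{m^{\nu n}}\Bigl(1+\frac{1}{2\nu}\Bigr)^n,
\]
not $\bigl((2\nu)^{-1}+m^{-\nu}\bigr)^n$; the two agree only at $m=1$, and for $m<1$ your bound is strictly larger, so the stated inequality is not recovered. The paper's own derivation has exactly the same issue: its second displayed line is an integral over $\R_+^{|\Omega|}$ only (the $\Omega^c$-integration has silently disappeared), and the passage to the third line inserts an unexplained factor $\nu^{-(n-|\Omega|)}$. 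A clean run of the Leibniz--Cauchy argument yields precisely your constant $m^{-\nu n}(1+(2\nu)^{-1})^n$. So this is a defect in the stated constant rather than a gap in your method; your argument is correct and proves the $R$-estimate with that constant.
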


\begin{proof}
From  \eqref{Bnnorm} and \eqref{Cau}, we have
\begin{align*}
\|R_{\lambda}^\nu f\|_{\mathcal{B}_0^n}&\le
\sum_{\Omega\in \mathcal{P}_n}
\int_{\R_{+}^n}\,\sup_{\z\in W_{\aa_{I_n}}}\,|(D_\O R_{\lambda}^\nu)({\z})||(D_{\O^c}f)(\z)|\,d{\bm \alpha}\\
&\le\|f\|_{H^\infty_\omega}\sum_{\O\in\P_n}
\frac{\nu^{|\O|}}{2^{n-|\O|}}
\int_{\R^{|\Omega|}_{+}}\frac{d{\bm \alpha}}{|m+\aa_\O|^{1+\nu}}\\
&=\|f\|_{H^\infty_\omega}  \sum_{j=0}^n
\frac{\binom{n}{j}}{(2\nu)^{n-j}} \frac{1}{m^{\nu j}}
%&=\|f\|_{H^\infty_\omega}\sum_{k=0}^n tinom{n}{k}\sum_{j=0}^k tinom {k}{j}
%\frac{1}{(2\nu)^{k-j}m^{\nu k}}
%= \|f\|_{H^\infty_\omega}\sum_{k=0}^n tinom{n}{k}
%\left(\frac{1}{2\nu}+\frac{1}{m}\right)^k\\
=  \|f\|_{H^\infty_\omega}
\left(\frac{1}{2\nu}+\frac{1}{m^\nu}\right)^n.
\end{align*}

By a similar argument, we have (with $d\t_{n-j} = dt_1\dots dt_{n-j}$),
\begin{align*}
\|S_{\lambda}^\nu f\|_{\mathcal{B}_0^n}
&\le
\|f\|_{H^\infty_\omega}
\sum_{j=0}^n \frac{\binom{n}{j}}{2^{n-j}}\prod_{k=0}^{j-1}(\nu+k)
\int_{\R_{+}^n}
\big(m+\sum_{k=1}^n t_k\big)^{-(\nu+j)}\prod_{k=1}^{n-j}(m+t_k)^{-1} \,d\t\\
&=\|f\|_{H^\infty_\omega}
\sum_{j=0}^n \frac{\binom{n}{j}}{2^{n-j}}
\int_{\R_{+}^{n-j}}\big(m+\sum_{k=1}^{n-j} t_k\big)^{-\nu}\prod_{k=1}^{n-j}(m+t_k)^{-1} \,d\t_{n-j}.
\end{align*}
By replacing $t_k$ by $m t_k$, taking the inequality $\prod_{k=1}^{n-j} (1+t_k) \le \big(1 + \sum_{k=1}^{n-j} t_k\big)^{n-j}$ and raising it to the power $-\nu/(n-j)$, we arrive at
\begin{align*}
\|S_{\lambda}^\nu f\|_{\mathcal{B}_0^n}
&\le \frac{\|f\|_{H^\infty_\omega}}{m^\nu}
\sum_{j=0}^n \frac{\binom{n}{j}}{2^{n-j}}
\int_{\R_{+}^{n-j}}
\prod_{k=1}^{n-j}(1+t_k)^{-(1+\nu/(n-j))} \,d\t_{n-j}\\
&=\frac{\|f\|_{H^\infty_\omega}}{m^\nu}
\sum_{j=0}^n \binom{n}{j}\left(\frac{n-j}{2\nu}\right)^{n-j} 
\le \frac{\|f\|_{H^\infty_\omega}}{m^\nu}
\left(\frac{n}{2\nu}+1\right)^n.  \qedhere
\end{align*}
\end{proof}

%\subsection{Exponent}

Now we show that the damping in Lemma \ref{AN} is not necessary if ${\rm sp}(f)$ is separated
from zero.

\begin{lemma}\label{BnE}
Let $f\in H^\infty([\tau,\infty)^n)\cap H^\infty_\omega$, $\tau>0$, $\omega>0$.
Then $f \in \mathcal B^n_0$, and
\begin{equation}\label{LE}
\|f\|_{\mathcal{B}_0^n}
\le\|f\|_{H^\infty_\omega}e^{-n\omega \tau}\left(1+
\frac{1}{2}\log\left(1+\frac{1}{\tau\omega}\right)\right)^n.
%\frac{1}{2}\log\left(1+\frac{1}{\tau\omega}\right)\right)^n.
\end{equation}
\end{lemma}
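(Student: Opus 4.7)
The plan is to factor $f=e_\tau g$ with $g$ holomorphic on the enlarged polyhalfplane $\{\Re z_j>-\omega\}$, differentiate via the product rule, apply Cauchy's inequality to $g$ there, and then integrate. Three ingredients are needed: an improved decay estimate for $f$ on $\{\Re z_j>-\omega\}$, the Leibniz expansion of $\D_n f$, and an elementary inequality comparing the exponential integral with a logarithm.

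First I would promote the decay estimate \eqref{bound} from $\C_+^n$ to the enlarged polyhalfplane. The shifted function $\tilde f(\z):=f(\z-\omega\1)$ lies in $H^\infty(\C_+^n)$ with $\|\tilde f\|_\infty=\|f\|_{H^\infty_\omega}$, and its boundary value $\tilde f^b(\bb)=f(-\omega+i\bb)$ has Fourier spectrum in $[\tau,\infty)^n$ (a real translation of the argument only multiplies the Fourier transform by a unimodular factor, so the spectral support is unchanged). Applying \eqref{bound} to $\tilde f$ and translating back yields
\[
|f(\w)|\le e^{-\tau(\sum_{j=1}^n\Re w_j+n\omega)}\|f\|_{H^\infty_\omega}, \qquad \Re w_j>-\omega.
\]
Setting $g(\z):=e^{\tau(z_1+\cdots+z_n)}f(\z)$, this shows $g$ is holomorphic and bounded on $\{\Re z_j>-\omega\}$ with $\|g\|_{H^\infty_\omega}\le e^{-n\omega\tau}\|f\|_{H^\infty_\omega}$.

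Next, from $f=e_\tau g$ and the identity $\D_\O e_\tau=(-\tau)^{|\O|}e_\tau$, the Leibniz rule gives
\[
\D_n f(\z)=e_\tau(\z)\sum_{\O\subseteq I_n}(-\tau)^{|\O|}\D_{\O^c}g(\z).
\]
Applying Cauchy's inequality \eqref{Cau} to $g$ on the enlarged polyhalfplane (with each $\Re z_j$ replaced by $\omega+\Re z_j$) yields $|\D_{\O^c}g(\aa+i\bb)|\le\|g\|_{H^\infty_\omega}/\bigl(2^{|\O^c|}\prod_{j\in\O^c}(\omega+\a_j)\bigr)$. Substituting and factoring the sum as a product,
\[
\sup_{\bb\in\R^n}|\D_n f(\aa+i\bb)|\le e^{-n\omega\tau}\|f\|_{H^\infty_\omega}\,e^{-\tau(\a_1+\cdots+\a_n)}\prod_{j=1}^n\left(\tau+\frac{1}{2(\omega+\a_j)}\right).
\]
Integrating over $\aa\in\R_+^n$, the integral factorises and each factor equals $\int_0^\infty e^{-\tau\a}\bigl(\tau+(2(\omega+\a))^{-1}\bigr)\,d\a=1+\tfrac12 e^{\tau\omega}E_1(\tau\omega)$, where $E_1(x):=\int_x^\infty e^{-t}t^{-1}\,dt$.

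The hardest step is to pass from $E_1$ to the stated logarithm via the inequality $e^xE_1(x)\le\log(1+1/x)$ for all $x>0$. Writing $e^xE_1(x)=\int_0^\infty e^{-s}(s+x)^{-1}\,ds$ and $\log(1+1/x)=\int_0^1(x+u)^{-1}\,du$, the inequality reduces after cancellation to
\[
\int_0^1\frac{1-e^{-s}}{s+x}\,ds\ge\int_1^\infty\frac{e^{-s}}{s+x}\,ds.
\]
Both sides integrate $(s+x)^{-1}$ against measures of the same total mass $e^{-1}$, the first supported on $[0,1]$ and the second on $[1,\infty)$; since $s\mapsto(s+x)^{-1}$ is strictly decreasing, the first integral dominates. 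This produces the factor $\bigl(1+\tfrac12\log(1+1/(\tau\omega))\bigr)^n$ claimed in \eqref{LE}. Finally, the decay estimate from the first step ensures that $f(\z)\to0$ whenever $\Re z_j\to\infty$ for some $j$, so Proposition~\ref{Pr1} places $f$ in $\mathcal B_0^n$.
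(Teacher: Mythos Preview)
Your argument follows the same route as the paper's --- factor $f=e_\tau g$ with $g\in H^\infty_\omega$, expand $\D_n f$ by Leibniz, bound $\D_{\O^c}g$ via Cauchy's inequality on the enlarged half-space, and integrate --- and you additionally supply a self-contained proof of the inequality $e^xE_1(x)\le\log(1+1/x)$ where the paper instead cites \cite[Lemma 3.2(2)]{BGT}. One minor slip: the Fourier multiplier produced by the real shift $\z\mapsto\z-\omega\1$ is not unimodular but real-positive (of the form $e^{\omega\sum_j\xi_j}$ on the spectral side), though this does not affect your conclusion that the spectral support is preserved.
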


\begin{proof} 
By \eqref{bound},
there exists $g \in H^\infty_\omega$ such that
$
f(\mathbf{z})=e^{-\tau(z_1+\cdots+z_n)}g(\mathbf{z}), \, \z\in\C_+^n,
$
and thus
$\|g\|_{H_\omega^\infty}=e^{-n\omega\tau}\|f\|_{H^\infty_\omega}$.
From this and \eqref{Cau},  we infer that 
\begin{align*}
|\mathcal{D}_n f(\mathbf{z})|&\le
\sum_{\O\in\P_n} |(D_{\O} e^{-\tau(z_1+\cdots+z_n)})(D_{\O^c} g(\mathbf{z}))|\\
&\le e^{-\tau(\alpha_1+\cdots+\alpha_n)}\sum_{\O\in\P_n}\left(\frac{\tau^{|\O|}}{2^{n-|\O}|\omega+{\aa}_{\O^c}|}\right)
\|g\|_{H^\infty_\omega},
\end{align*}
where $z_j=\a_j+i\b_j$.  Since the variables in the integrals below over $\R_+^{n}$ split as the product of $j$ integrals of one function and $n-j$ integrals of another function, we see that
\begin{align*}
\|f\|_{\mathcal{B}_0^n}
&\le \|g\|_{H^\infty_\omega}\sum_{\Omega\in \mathcal{P}_n}
\,\frac{\tau^{|\O|}}{2^{n-|\O|}}
\int_{\R_{+}^n}e^{-\tau \sum_{j=1}^n \alpha_j}\left(\frac{1}{|\omega+{\bm \alpha}_{\O^c}|}\right)\,d{\bm \alpha}\\
&= \|g\|_{H^\infty_\omega}\sum_{j=0}^n \binom{n}{j}
\frac{1}{2^{n-j}} \left( \int_0^\infty \tau e^{-\tau\a} \,d\a \right)^j \left( \int_0^\infty e^{-\tau\a} (\o+\a)^{-1} \,d\a \right)^{n-j}.
\end{align*}
The first integral in the line above is equal to $1$, and the second integral is shown in the proof of \cite[Lemma 3.2(2)]{BGT} to be less than $\log (1 + (\tau\o)^{-1})$, so 
%&\null\hskip-20pt \le \|g\|_{H^\infty_\omega}\sum_{j=0}^n\binom{n}{j} \frac{1}{2^{n-j}}
%\left(\log\left(1+\frac{1}{\tau\omega}\right)\!\right)^{n-j}
\begin{align*}
\|f\|_{\B_0^n}&\le \|g\|_{H^\infty_\omega}\sum_{j=0}^n \binom{n}{j}
\!\left(\frac{1}{2}\log\left(1+\frac{1}{\tau\omega}\right)\!\right)^{n-j}
\!= \|g\|_{H^\infty_\omega}\!\left(1+
\frac{1}{2}\log\left(\!1+\frac{1}{\tau\omega}\right)\!\right)^n \!\!. \qedhere
\end{align*}  
\end{proof}

\subsection*{Functions with restricted spectrum}
Instead of assuming that $f \in H^\infty_\omega$, 
we now assume that ${\rm sp}(f)$ is separated from zero and infinity
and we obtain an estimate of $\|f\|_{\mathcal B^n}$ in terms of $\|f\|_{\infty}$. 
We need the following technical lemma.    For $\t = (t_1,\dots,t_k) \in (0,\infty)^k$, let $S_k(\t)=  \sum_{j=1}^k t_j$ and $P_k(\t) = \prod_{j=1}^k t_j$.

\begin{lemma}\label{Fin}
For $k\in \N$ and $a\in(0,1)$, let
\[
J_k(a):=\int_{\R_{+}^k}\,
\frac{d\t}
{P_k(\t) +a e^{S_k(\t)}}.
\]
Then
\begin{equation}\label{An}
J_k(a)\le 2^{2k}\log^k\left(1+\frac{1}{a}\right).
\end{equation}
\end{lemma}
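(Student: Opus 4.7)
The plan is to make a change of variables that concentrates the singular behaviour of the integrand, estimate the resulting expression via a sublevel-set distribution function, and then finish with a two-case argument (one case uses this sublevel bound, the other uses the trivial resolvent bound).

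First I would substitute $v_j = \log t_j$ (so $t_j=e^{v_j}$, $dt_j = e^{v_j}\,dv_j$, $v_j \in \R$). A direct calculation gives
\[
J_k(a) = \int_{\R^k} \frac{d\mathbf{v}}{1 + a\,e^{w(\mathbf{v})}}, \qquad w(\mathbf{v}) := \sum_{j=1}^k \phi(v_j), \qquad \phi(v) := e^v - v,
\]
and the convex function $\phi$ attains its minimum $\phi(0)=1$. Setting $T := \log(1/a) > 0$, the elementary bound
\[
\frac{1}{1 + ae^w} \le \mathbf{1}_{\{w \le T\}} + e^{-(w-T)}\mathbf{1}_{\{w > T\}}
\]
and the layer-cake representation of the second piece produce, after a clean cancellation,
\[
J_k(a) \le e^T \int_T^\infty F(u)\,e^{-u}\,du, \qquad F(u) := \bigl|\{\mathbf{v}\in\R^k : w(\mathbf{v}) \le u\}\bigr|.
\]

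Next I would estimate $F$. Because $\phi(v) \ge 1$ for all $v$, $F(u) = 0$ whenever $u < k$. For $u \ge k$, the constraint $\sum \phi(v_j) \le u$ forces each coordinate to satisfy $\phi(v_j) \le u-(k-1) \le u$, so $F(u) \le \ell(u)^k$ where $\ell(M) := |\{v\in\R : \phi(v)\le M\}|$. The positive root $v_+$ of $\phi(v)=M$ satisfies $v_+ \le \log(2M)$ and the negative root $v_-$ satisfies $|v_-|\le M$, so $\ell(M)\le M + \log(2M) \le 2M$ for $M\ge 1$. Thus $F(u) \le (2u)^k$ for all $u\ge 0$.

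The proof then splits according to the size of $T$. If $T \ge k$, insert $F(u)\le (2u)^k$ and use the classical identity
\[
e^T\int_T^\infty u^k e^{-u}\,du = k!\sum_{i=0}^k \frac{T^i}{i!};
\]
when $T\ge k$ the sequence $T^i/i!$ is non-decreasing for $i\le k$, so the sum is at most $(k+1)T^k/k!$. Combined with $k+1 \le 2^k$ (trivial induction) and $L := \log(1+1/a) \ge T$, this yields $J_k(a) \le 2^k(k+1)T^k \le 4^k L^k$. If instead $T < k$, discard the refined bound and use only the trivial estimate $J_k(a) \le \frac{1}{a}\int_{\R_+^k} e^{-S_k(\t)}\,d\t = 1/a = e^T \le e^k$. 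Since $a\in(0,1)$ implies $L > \log 2$, we have $4^k L^k > (4\log 2)^k$, and the (tight) numerical inequality $4\log 2 = 2.772\ldots > 2.718\ldots = e$ gives $e^k \le (4\log 2)^k \le 4^k L^k$, completing the bound.

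The main obstacle is squeezing the constant down to exactly $2^{2k}$: in the large-$T$ regime this is a binomial-coefficient inequality ($k+1 \le 2^k$), while in the small-$T$ regime it rests on the non-obvious but essentially tight numerical fact $4\log 2 > e$, which just barely absorbs the trivial estimate $J_k(a) \le 1/a \le e^k$ into the desired bound $4^k L^k$.
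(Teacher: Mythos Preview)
Your proof is correct and takes a genuinely different route from the paper's. The paper argues by induction on $k$: it rewrites $J_{k+1}(a)=\int_0^\infty J_k(ae^\tau/\tau)\,d\tau/\tau$ via Fubini, splits the $\tau$-integral at $\tau=1$, applies the inductive bound together with the crude estimate $tJ_k(t)\le 1$, and closes the induction using the same numerical fact $(4\log 2)^{k+1}\ge 2(k+1)$ that you invoke. Your argument is direct rather than recursive: the logarithmic change of variables $v_j=\log t_j$ turns the integrand into $(1+ae^{w})^{-1}$ with $w=\sum(e^{v_j}-v_j)$, after which the problem becomes a sublevel-set estimate for the convex function $\phi(v)=e^v-v$, handled cleanly by the incomplete-gamma identity and the elementary bound $|\{\phi\le M\}|\le 2M$.

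Your approach is self-contained (it does not import the $k=1$ case from \cite{BGT}) and makes transparent where each factor in $2^{2k}$ comes from: one $2^k$ from the level-set width $\ell(u)\le 2u$, the other from $k+1\le 2^k$ in the large-$T$ regime. The paper's induction is shorter to write down once the base case is granted, but hides this structure. Both proofs ultimately lean on the same tight inequality $4\log 2>e$ to absorb the small-$T$ (equivalently, $a$ near $1$) case, which is unavoidable since the target constant $4^k$ is essentially sharp there.
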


\begin{proof} 
We argue by induction. If $k=1,$ then \eqref{An} has been proved in \cite[p.37, (2.12)]{BGT}.
Next, assume that \eqref{An} holds for some $k\ge 1$.    Note that $J_k$ is a decreasing function, and by Fubini's theorem,
\[
J_{k+1}(a) = \int_0^\infty \int_{\R_+^k} \frac{d\t\,d\tau}{P_k(\t)\tau + a e^{S_k(\t)}e^\tau} = \int_0^\infty J_k(ae^\tau/\tau) \,\frac{d\tau}{\tau}.
\]
Using the monotonicity of $J_k$, a change of variable $u=e^{-\tau}$, and the inequalities $tJ_k(t)\le 1$,
 and $(4\log 2)^{k+1}\ge 2(k+1)$ if $k\ge 1$,
we infer that
\begin{align*}
J_{k+1}(a)&\le\int_0^1 J_k(a/\tau)\frac{d\tau}{\tau}+
\int_1^\infty J_k(ae^{\tau})\,d\tau
=\int_0^1 J_k(a/\tau)\frac{d\tau}{\tau}+
\int_0^{1/e} J_k(a/u)\frac{du}{u}\\
&\le 2\int_0^1 J_k(1/s)\frac{ds}{s}+
2\int_1^{1/a} J_k(1/s)\frac{ds}{s}
\le 2 +
2^{2k+1}\int_1^{1/a}\log^k(1+s)\,\frac{ds}{s}\\
&\le 2+
2^{2k+2}\int_1^{1/a}\frac{\log^k(1+s)}{1+s}\,ds
=2+\frac{2^{2k+2}}{k+1}\left(\log^{k+1}(1+1/a)-\log ^{k+1}2\right)\\
&\le 2^{2(k+1)}\log^{k+1}(1+1/a),
\end{align*}
so \eqref{An} is true for $k+1$. The assertion follows.
\end{proof}

The following lemma was proved in \cite[Lemma 2.5]{BGT} for $n=1$ and in \cite[Lemma 4.1.1]{Kob} for $n=2$.   The functions considered in those cases played crucial roles in those papers.

\begin{lemma}\label{Log}
Let $f\in H^\infty([\epsilon,\sigma]^n)$, where $0<\epsilon<\sigma$. Then
$f\in \mathcal{B}^n_0$ and
\begin{equation}\label{LL}
\|f\|_{\mathcal{B}_0^n}\le
2^{n+1}\|f\|_\infty
\left( \log\left(1+\left(\frac{2\sigma}{\epsilon}\right)^n\right)\right)^n. %\quad
%\|f\|_{\mathcal{B}^n}\le
%2\|f\|_\infty
%\left(
% 2\log\left(1+\left(\frac{2\sigma}{\epsilon}\right)^n\right)+1\right)^n.
\end{equation}
\end{lemma}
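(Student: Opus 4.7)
The plan is to combine two complementary bounds on $|\D_n f(\aa+i\bb)|$ — one which is strong for large $\aa$ and one which is strong for small $\aa$ — and then reduce the resulting integral to $J_n$ via a change of variable, so that Lemma \ref{Fin} does the remaining work.

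First I would record two pointwise estimates on $\D_n f$. Since $f \in H^\infty([\epsilon,\sigma]^n) \subseteq H^\infty([0,\sigma]^n)$, Bernstein's inequality \eqref{Des1} with $\Omega = I_n$ gives $\sup_\bb |\D_n f(\aa+i\bb)| \le \sigma^n \sup_\bb |f(\aa+i\bb)|$, and the spectral localisation \eqref{bound} yields $\sup_\bb |f(\aa+i\bb)| \le e^{-\epsilon S_n(\aa)} \|f\|_\infty$, so
\[
\sup_\bb|\D_n f(\aa+i\bb)| \le \sigma^n e^{-\epsilon S_n(\aa)} \|f\|_\infty.
\]
On the other hand, the iterated Cauchy inequality gives
\[
\sup_\bb|\D_n f(\aa+i\bb)| \le \frac{\|f\|_\infty}{2^n P_n(\aa)}.
\]
The first bound controls the integrand for large $S_n(\aa)$ (where $e^{-\epsilon S_n(\aa)}$ is small), the second controls it near the coordinate axes (where $P_n(\aa)$ is bounded below only far from the axes); neither is integrable on its own.

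Next I would combine the two bounds via the elementary inequality $\min(A,B) \le 2/(1/A+1/B)$ applied to $A = \sigma^n e^{-\epsilon S_n(\aa)}$ and $B = 2^{-n}/P_n(\aa)$, obtaining
\[
\sup_\bb|\D_n f(\aa+i\bb)| \le \frac{2\|f\|_\infty}{\sigma^{-n} e^{\epsilon S_n(\aa)} + 2^n P_n(\aa)}.
\]
Integrating this over $\R_+^n$ gives, via the change of variables $\t = \epsilon \aa$ and factoring $2^n$ out of the denominator,
\[
\|f\|_{\B^n_0} \le \frac{2\|f\|_\infty}{2^n}\int_{\R_+^n}\frac{d\t}{P_n(\t)+ (\epsilon/(2\sigma))^n e^{S_n(\t)}} = \frac{\|f\|_\infty}{2^{n-1}}\, J_n\bigl((\epsilon/(2\sigma))^n\bigr).
\]
Applying Lemma \ref{Fin} with $a=(\epsilon/(2\sigma))^n$ yields the stated inequality
\[
\|f\|_{\B^n_0} \le 2^{n+1}\|f\|_\infty \log^n\!\left(1 + (2\sigma/\epsilon)^n\right).
\]
To conclude $f\in\B^n_0$ (not merely that the $\B^n_0$-seminorm is finite), I would note that $f \in H^\infty([\epsilon,\infty)^n)$ together with \eqref{bound} gives $f(\z) \to 0$ as $\Re z_j \to \infty$ for any $j$, so by Proposition \ref{Pr1} $f$ belongs to $\B^n$ and in particular $f_{\Omega} \equiv 0$ for every proper subset $\Omega \subset I_n$, i.e.\ $f \in \B^n_0$.

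There is no real obstacle here; the only delicate point is balancing the powers of $2$, $\epsilon$ and $\sigma$ correctly when merging the Cauchy and Bernstein bounds so that the argument of $J_n$ comes out as $(\epsilon/(2\sigma))^n$ and the prefactor exactly matches $2^{n+1}$ after applying Lemma \ref{Fin}.
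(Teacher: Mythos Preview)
Your proof is correct and follows essentially the same approach as the paper: both combine the Bernstein/spectral bound $\sigma^n e^{-\epsilon S_n(\aa)}\|f\|_\infty$ with the Cauchy bound $2^{-n}\|f\|_\infty/P_n(\aa)$ via $\min(A,B)\le 2/(A^{-1}+B^{-1})$, then reduce to $J_n((\epsilon/(2\sigma))^n)$ and invoke Lemma~\ref{Fin}. Your bookkeeping of the constants is in fact tidier than the paper's displayed computation, and your explicit appeal to \eqref{bound} and Proposition~\ref{Pr1} to confirm membership in $\B^n_0$ fills a small gap the paper leaves implicit.
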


\begin{proof}
From (\ref{Des1}),
we have 
\begin{equation}\label{Des}
|\D_n f({\bm \alpha}+i{\bm \beta})|\le
\sigma^n \sup_{\bm\beta \in \R^n}\,|f({\bm \alpha}+i{\bm \beta})|
\le \sigma^n e^{-\epsilon S(\aa)} \|f\|_\infty.
\end{equation}
From Cauchy's inequality,
\[
|\D_{n}f({\bm \alpha}+i{\bm \beta})|\le \frac{\|f\|_\infty}{2^n P_n(\aa)}.
\]
Thus
\begin{align*}
|\D_n f({\bm \alpha}+i{\bm \beta})|&\le \|f\|_\infty
\min\left\{\sigma^n  e^{-\epsilon S_n(\aa)},
2^{-n} P_n(\aa)^{-1}\right\} \le \frac{2\|f\|_\infty}
{2^n P_n(\aa)
+\sigma^{-n}e^{\epsilon S_n(\aa)}}.
\end{align*}
From Lemma, \ref{Fin},
\begin{align*}
\int_{\R_{+}^n}\,
\frac{d{\bm \alpha}}{P_n(\aa) 
+\sigma^{-n}e^{\epsilon S_n(\aa)}}
=J_n\left(\left(\frac{\epsilon}{2\sigma}\right)^n\right)
\le 2^{n} \left(\log\left(1+\left(\frac{2\sigma}{\epsilon}\right)^n\right)\right)^n.
\end{align*}
Now \eqref{LL} follows.
\end{proof}

Lemmas \ref{AN}, \ref{BnE} and \ref{Log} imply the following operator norm-estimates for 
$\mathcal B^n$-functions of $n$-tuples of operators satisfying the full (GSF) condition.
They were obtained in  \cite{BGT} for $n=1$, providing direct alternative proofs 
for the results in \cite{Haase_JFA}, \cite{Haase_JFAR}, \cite{Vit} and their improvements.
The estimates quantify a deviation of the $\mathcal B^n$-calculus from the bounded $H^\infty$-calculus, 
and they simplify operator norm-estimates in several situations of interest.    In statements (i) and (ii), $f(\A)$ is not necessarily a bounded operator, but it can be defined as a closed operator in the extended half-plane calculus for several variables.   The proofs are consequences of Lemmas \ref{AN}, \ref{BnE} and \ref{Log}.   For (ii), one uses (\ref{exp}).

\begin{thm}
Let $\mathcal A$ be an $n$-tuple of operators on a Banach space $X$, satisfying
the full {\rm (GSF)} condition, and let $\gamma_{\mathcal A,0}$ be the norm of $\Phi_A$ on $\B_0^n$.
Then the following hold.

\begin{enumerate}[\rm(i)]
\item
Let $f\in H^\infty_\omega$, $\omega>0$, and let
$\nu>0$, $\lambda\in \C_{+}$, and
$m=\min\{\omega,{\rm Re}\,\lambda\}$. Let $R_\l^\nu$ and $S_\l^\nu$ be as in {\rm(\ref{RSl})}.  Then
\[
\|(R_\l^\nu f)(\A)\| \le
\gamma_{\mathcal A, 0} \left(\frac{1}{2\nu}+\frac{1}{m}\right)^n \|f\|_{H_\omega^\infty}.
\]
and
\[
\|(S_\l^\nu f)(\A)\|\le \gamma_{\mathcal A, 0}
m^{-\nu}\left(\frac{n}{2\nu}+1\right)^n \|f\|_{H^\infty_\omega}.
\]
\item
Let $g \in H^\infty_\omega$, $\omega>0$ and $\tau>0$.   Then
\[
\|(g e_{-\tau})(\A)\|
\le \gamma_{\mathcal A, 0} e^{-n\omega \tau}\left(1+
\frac{1}{2}\log\left(1+\frac{1}{\tau\omega}\right)\right)^n \|g\|_{H^\infty_\omega}.
\]
\item Let $f\in H^\infty([\epsilon,\sigma]^n)$, $0<\epsilon<\sigma$.  Then
\begin{equation}\label{LL1}
\|f(\mathcal A)\| \le
2^{n+1}\gamma_{\mathcal A, 0} \left(
 \log\left(1+\left(\frac{2\sigma}{\epsilon}\right)^n\right)\right)^n \|f\|_\infty.
\end{equation}
\end{enumerate}
\end{thm}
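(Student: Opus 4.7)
The plan is to derive all three estimates as immediate corollaries of the $\B_0^n$-norm bounds already established in Lemmas~\ref{AN}, \ref{BnE}, and~\ref{Log}, combined with the basic inequality
\[
\|h(\A)\|_{L(X)} \le \gamma_{\A,0}\,\|h\|_{\B_0^n}, \qquad h\in\B_0^n,
\]
which is built into the construction of the $\B^n$-calculus (see the definition of $\gamma_{\A,0}$ just after \eqref{defPA}). In each of (i)--(iii) the work reduces to exhibiting the relevant function as an element of $\B_0^n$ with a controlled $\B_0^n$-norm, and then composing with $\Phi_\A$.

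For part (i), I would apply Lemma~\ref{AN} directly with $f\in H^\infty_\omega$, $\nu>0$, $\l\in\C_+$ and $m=\min\{\omega,\Re\l\}$: this places both $R_\l^\nu f$ and $S_\l^\nu f$ in $\B_0^n$ with the explicit norm bounds stated there, and feeding these through $\Phi_\A$ yields the two inequalities of (i). Part (iii) is equally short: Lemma~\ref{Log} delivers the required $\B_0^n$-bound for any $f\in H^\infty([\epsilon,\sigma]^n)$, and composing with $\Phi_\A$ produces \eqref{LL1}.

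For part (ii), the extra step is the spectral observation \eqref{exp}: setting $f := g\, e_{-\tau}$, the identity $H^\infty([\tau,\infty)^n) = e_{\tau} H^\infty(\C_+^n)$ gives $f\in H^\infty([\tau,\infty)^n)$, while the pointwise bound $|e_{-\tau}(\z)|\le e^{n\omega\tau}$ on $\{\Re z_j>-\omega\}$ shows that $f\in H^\infty_\omega$ as well. Lemma~\ref{BnE} then supplies a $\B_0^n$-norm estimate for $f$, and a final application of $\Phi_\A$ concludes (ii); the factor $e^{-n\omega\tau}$ appearing in the conclusion is tracked through the identity $\|g\|_{H^\infty_\omega} = e^{-n\omega\tau}\|f\|_{H^\infty_\omega}$ used inside the proof of Lemma~\ref{BnE} to re-express the $H^\infty_\omega$-norm of the factored function.

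I do not anticipate any serious obstacle, since all of the analytic heavy lifting, namely the Cauchy estimates in the enlarged tube, the Paley--Wiener-type bound \eqref{bound}, Bernstein's inequality \eqref{Des1}, and the combinatorial integral estimate of Lemma~\ref{Fin}, has already been absorbed into the three preceding lemmas. The only mildly delicate point is the bookkeeping of the $e^{\pm n\omega\tau}$ factors in (ii), where one must track carefully how the exponential shift interacts with the $H^\infty_\omega$-norm of $f$ in order to match the constant in the stated inequality.
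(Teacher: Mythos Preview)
Your proposal is correct and matches the paper's own argument essentially verbatim: the paper simply states that the three estimates are consequences of Lemmas~\ref{AN}, \ref{BnE} and~\ref{Log} together with the basic bound $\|h(\A)\|\le\gamma_{\A,0}\|h\|_{\B_0^n}$, and that for (ii) one uses the identity \eqref{exp}. Your only addition is spelling out the $e^{\pm n\omega\tau}$ bookkeeping in (ii), which is exactly the point the paper leaves implicit.
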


\section{Appendix}

For the sceptical reader,  we will justify the application of Fubini's theorem
in the proof of  Lemma \ref{FA2n} by showing that
\begin{equation*}\label{PrF2}
J_\t:=\int_{\C_{+}^m}\int_{\C_{+}^k} |(\D_{\O_f} f)(\lb)|\,
|(\D_{\O g}g)(\nunu)| \,
S_\t(\lb,\nunu) \, dV_k(\lb)dV_m(\nunu)<\infty,
\end{equation*}
where $f$ and $g$ are elementary functions in $\B^n$ with supports $\O_f$ and $\O_g$ with cardinalities $k$ and $m$ respectively, and $\O_f \cup \O_g = I_n$.   Here $\lb$ and $\nunu$ are indexed by $\O_f$ and $\O_g$, respectively, and
\[
S_\t(\lb,\nunu)=\int_{\C_{+}^n}|\langle \mathcal{K}_n(\mathcal{A},\ov{\z})x,x^{*}\rangle|  
\left|\mathcal{D}_{n}\left(\mathcal{K}_k(\z_{\O_f}+\t_{\O_f},\ov{\lb})
\mathcal{K}_m(\z_{\O_g}+\t_{\O_g},\ov{\nunu})\right)\right| dV_n(\z),
\]
where $\D_n$ denotes differentiation with respect to all the $z_j$ variables (once each).

We may assume that $\|x\|=\|x^{*}\|=1$ and put $\b_j = \Re\l_j$, $\eta_l = \Re\nu_l$.  
Using the (GSF) condition for $\A$, we can estimate:
\begin{align*}
S_\t(\lb,\nunu) &\le \gamma_\A\int_{\R_{+}^n}
\sup_{\Re z_j=\a_j} \left|\mathcal{D}_{n}(\mathcal{K}_k(\z_{\O_f}+ \t_{\O_f}, \ov\lb)
\mathcal{K}_m(\z_{\O_g}+ \t_{\O_g},\ov{\nunu}))\right| \, d\aa\\
&\le \gamma_\A \bigg|\int_{\R_{+}^n}
\D_n \left(\mathcal{K}_k(\aa_{\O_f}+\bm{t}_{\O_f},\bb_{\O_f})
\mathcal{K}_m(\aa_{\O_g}+\bm{t}_{\O_g},\bm{\eta}_{\O_g})\right)\,d\aa \bigg|\\
&=\gamma_{\mathcal{A}} \big|\mathcal{K}_k(\bm{t}_{\O_f},\bb_{\O_f}) \mathcal{K}_m(\bm{t}_{\O_g},\bm{\eta}_{\O_g})\big|
\le \gamma_{\mathcal{A}}\left(\frac{2}{\pi}\right)^{k+m}\prod_{j\in\O_f} \frac{1}{2t\beta_j}
\prod_{l\in\O_g} \frac{1}{2t\eta_l}.
\end{align*}
The inequality in the second line is obtained by straightforward estimates on the integrand on the function.  
Moving the absolute value signs outside the integral is justified, because the integrand is real-valued, and its sign depends only on $k$ and $m$.    
The equality in the third line comes from the fundamental theorem of calculus for each of the variables.
The final inequality is straightforward.
Now
\begin{align*}
J_\t&\le
\frac{\gamma_{\mathcal{A}}}{(\pi t)^{k+m}}
\int_{\R_{+}^m}\int_{\R_{+}^k}\sup_{\Re\l_j=\beta_j,\, \Re\nu_l = \eta_l}\,
|(\mathcal{D}_{\O_f} f)(\lb)| 
|(\mathcal{D}_{\O_g}g)(\nunu)|\,
d\bb d\bm{\eta}\\
&= \frac{\gamma_{\mathcal{A}}}{(\pi t)^{k+m}}
\|f\|_{\mathcal{B}_{\O_f,0}}\, \|g\|_{\mathcal{B}_{\O_g,0}}.
\end{align*}

 \end{document}